\newtheorem{theorem}{Theorem}[section]
\newtheorem{definition}[theorem]{Definition}
\newtheorem{proposition}[theorem]{Proposition}
\newtheorem{lemma}[theorem]{Lemma}
\newtheorem{remark}[theorem]{Remark}
\newtheorem{example}[theorem]{Example}
\newcommand{\N}{{\mathbb N}}
\newcommand{\R}{{\mathbb R}}
\newcommand{\eps}{\varepsilon}
\newcommand{\tot}{\leftrightarrow}
\newcommand{\dist}{{\rm{dist}}}
\begin{document}
\title{On the finitary content of Dykstra's cyclic projections algorithm}

\author[Pedro Pinto]{Pedro Pinto}
\date{\today}
\maketitle
\vspace*{-5mm}
\begin{center}
	{\scriptsize Department of Mathematics, Technische Universit\"at Darmstadt,\\
		Schlossgartenstra\ss{}e 7, 64289 Darmstadt, Germany, \ \\ 
		E-mail: \protect\url{pinto@mathematik.tu-darmstadt.de}}
\end{center}

\begin{abstract}
	We study the asymptotic behaviour of the well-known Dykstra's algorithm through the lens of proof-theoretical techniques. We provide an elementary proof for the convergence of Dykstra's algorithm in which the standard argument is stripped to its central features and where the original compactness principles are circumvented, additionally providing highly uniform primitive recursive rates of metastability in a full general setting. Moreover, under an additional assumption, we are even able to obtain effective general rates of convergence. We argue that such additional condition is actually necessary for the existence of general uniform rates of convergence.
\end{abstract}
\noindent
{\bf Keywords:} Convex feasibility; projection methods; Dykstra's algorithm; rates of convergence; metastability; proof mining\\ 
{\bf MSC2020 Classification:} 47H09; 41A65; 90C25; 03F10

\section{Introduction}\label{intro}

Many problems in convex optimization can be stated in terms of finding a point in the intersection of a family of convex and closed sets, what is known as the convex feasibility problem:
\begin{equation}\tag{CFP}
	\text{find some point}\  x\in \bigcap_{j\in I}C_j
\end{equation}
assuming \textit{a priori} that $\bigcap_{j\in I} C_j\neq \emptyset$, i.e.\ the problem has a solution (is feasible). The study of such problems first appeared in connection with constrains defined by linear inequalities and where the feasibility set is the intersection of half-spaces. Since then the general problem has been the subject of much research due to its broad applicability in applied mathematics -- e.g in statistics, partial differential equations (Dirichlet problem over irregular regions), solving linear equations (Kaczmarz's method), image or signal restoration, and computed tomography. For further discussions we refer the reader to the surveys \cite{BauschkeBorwein(1996),Deutsch(1992)}.

One of the most successful and well-known techniques to iteratively approximate a solution to the CFP is von Neumann's method of alternating projections \eqref{MAP}. For a subspace $V$, let $P_V$ denote the orthogonal projection map onto $V$.
\begin{theorem}[von Neumann~\cite{vonNeumann(1950)}]
	Let $V_1, V_2$ be two closed vector subspaces of a Hilbert space $X$. Then, for any point $x_0\in X$ the iteration defined by
	\[
	x_{n+1}:=P_{V_1}P_{V_2}(x_n)
	\]
	converges strongly to $P_{V_1\cap V_2}(x_0)$.
\end{theorem}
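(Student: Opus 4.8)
\emph{Plan of proof.} The strategy is to reduce the convergence of the iteration $x_n:=T^{n}x_0$, $T:=P_{V_1}P_{V_2}$, to the behaviour of the powers of a single positive self-adjoint contraction, and then to conclude by the spectral theorem (or, in the more elementary spirit of the present paper, by a weak-compactness argument upgraded to norm convergence). First I would record the standard facts about the orthogonal projection $P_V$ onto a closed subspace $V$: it is linear, idempotent, self-adjoint and a contraction, and $\|P_Vx\|=\|x\|$ holds iff $x\in V$. Write $P_i:=P_{V_i}$, $F:=V_1\cap V_2$ and $P:=P_F$. From $T^{*}=P_2P_1$ and the fact that $P_1,P_2$ fix $F$ pointwise one obtains $\langle Tx,v\rangle=\langle x,v\rangle$ for every $v\in F$; hence each $T^{n}$ leaves $F^{\perp}$ invariant and restricts to the identity on $F$, so $T^{n}x_0=Px_0+T^{n}(x_0-Px_0)$ and it is enough to treat $x_0\in F^{\perp}$ (equivalently, restrict everything to the Hilbert space $F^{\perp}$, where the ranges of the two projections now meet only in $\{0\}$).

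Next comes the key algebraic observation: $Q:=T^{*}T=P_2P_1P_2$ is a positive self-adjoint contraction with $Q^{n}=P_2T^{n}$ for every $n\ge1$, so $Q^{n}x_0=P_2x_n$ and $x_{n+1}=P_1(Q^{n}x_0)$ for $n\ge1$. A one-line norm chase determines its fixed-point set: $F\subseteq\mathrm{Fix}(Q)$ is trivial, and if $Qx=x$ then $\|x\|=\|P_2P_1P_2x\|\le\|P_1P_2x\|\le\|P_2x\|\le\|x\|$ forces all the inequalities to be equalities, whence $x\in V_1\cap V_2$; thus $\mathrm{Fix}(Q)=F$. Consequently the theorem will follow once we know $Q^{n}x_0\to P_{\mathrm{Fix}(Q)}x_0$ in norm: applying the bounded operator $P_1$ and using $F\subseteq V_1$ gives $x_{n+1}=P_1(Q^{n}x_0)\to P_1P_Fx_0=P_Fx_0$, which is precisely $x_n\to P_{V_1\cap V_2}(x_0)$.

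It remains to prove $Q^{n}x_0\to P_{\mathrm{Fix}(Q)}x_0$ for the positive self-adjoint contraction $Q$, and this single analytic step is, I expect, the main obstacle: in infinite dimensions the Friedrichs angle between $V_1$ and $V_2$ may equal $1$, so no geometric---indeed no uniform---rate is available here. The shortest argument is the spectral theorem: writing $Q=\int_{0}^{1}\lambda\,dE_{\lambda}$ and using that $\lambda^{n}\to\mathbf{1}_{\{1\}}(\lambda)$ boundedly on $[0,1]$ gives $Q^{n}\to E(\{1\})=P_{\mathrm{Fix}(Q)}$ in the strong operator topology. The more elementary alternative---better suited to the programme of the present paper---works in the reduced case $V_1\cap V_2=\{0\}$: $\|Q^{n}x_0\|$ is nonincreasing; via the Pythagorean identities behind the chain $\|x_n\|\ge\|P_2x_n\|\ge\|x_{n+1}\|$ one gets $\|Q^{n}x_0-Q^{n+1}x_0\|\to0$, so any weak cluster point $w$ of the bounded sequence $(Q^{n}x_0)$ satisfies $(I-Q)w=0$, i.e.\ $w\in\mathrm{Fix}(Q)=\{0\}$, whence $Q^{n}x_0\rightharpoonup0$; finally this weak convergence is boosted to norm convergence by the self-adjointness identity $\|Q^{n}x_0\|^{2}=\langle Q^{2n}x_0,x_0\rangle\to\langle0,x_0\rangle=0$. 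Everything else is routine bookkeeping with projections.
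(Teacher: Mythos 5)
The paper states this result as classical background and cites von Neumann~\cite{vonNeumann(1950)}; it does not supply a proof, so there is nothing internal to compare against. Your proposal is a correct proof, and in fact your first variant (passing to $Q = T^{*}T = P_2P_1P_2$ and invoking the spectral theorem to get $Q^n \to E(\{1\}) = P_{\mathrm{Fix}(Q)}$ strongly) is essentially von Neumann's own argument. The identities $Q^n = P_2T^n$, $\mathrm{Fix}(Q) = V_1\cap V_2$ via the norm chase, the reduction $T^nx_0 = P_Fx_0 + T^n(x_0 - P_Fx_0)$, and the final push $x_{n+1} = P_1Q^nx_0 \to P_1P_Fx_0 = P_Fx_0$ are all sound. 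Your second, more elementary variant is also correct: the monotonicity $\|x_n\| \ge \|P_2x_n\| \ge \|x_{n+1}\|$ together with the two Pythagorean splittings does give $x_n - P_2x_n \to 0$ and $P_2x_n - x_{n+1} \to 0$, hence $(I-Q)Q^nx_0 \to 0$ in norm; any weak cluster point then lies in $\mathrm{Fix}(Q) = \{0\}$ (in the reduced case), and self-adjointness upgrades $Q^nx_0 \rightharpoonup 0$ to $\|Q^nx_0\|^2 = \langle Q^{2n}x_0, x_0\rangle \to 0$.

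Two remarks worth making in the spirit of the paper. First, your parenthetical that the elementary route is ``better suited to the programme of the present paper'' is only half right: it still leans on sequential weak compactness (extracting a weak cluster point), which is precisely the principle the paper is at pains to eliminate in the Dykstra setting; a genuinely finitary treatment would replace the cluster-point step by a metastable argument of the kind carried out in Section~\ref{main_section}, or quote the quantitative analysis of \eqref{MAP} in \cite{Kohlenbach(2016),KohlenbachLeusteanNicolae(2018),KohlenbachLopes-AcedoNicolae(2019)}. Second, your observation that the Friedrichs angle between $V_1$ and $V_2$ may equal $1$, so that no uniform rate is available, is exactly the phenomenon the paper packages as the absence of a modulus of regularity \eqref{mod_regularity}: a positive angle is the two-subspace instance of a (linear) modulus of regularity, and then the spectral picture even gives the familiar geometric rate $\|Q^n\|_{F^\perp} = c^{2n}$ with $c$ the cosine of the angle.
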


The original proof by von Neumann doesn't generalize immediately to more than two subspaces. This was overcome by Halperin who extended the convergence result to a finite number of subspaces.
\begin{theorem}[Halperin~\cite{Halperin(1962)}]
	Let $V_1, \cdots, V_m$ be $m\geq 2$ closed vector subspaces of a Hilbert space. Then, for any point $x_0\in X$ the iteration defined by
	\begin{equation}\label{MAP}\tag{{\rm{MAP}}}
		x_{n+1}:=P_{V_1}\cdots P_{V_n}(x_n)
	\end{equation}
	converges strongly to $P_{\bigcap_{j=1}^{m} V_j}(x_0)$.
\end{theorem}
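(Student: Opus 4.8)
The plan is to run the classical Fej\'er\nobreakdash-monotonicity argument in full and then to isolate, as the crux, the passage from weak to strong convergence. Throughout put $C:=\bigcap_{j=1}^{m}V_j$ (a closed subspace), $P_j:=P_{V_j}$, and let $T:=P_1P_2\cdots P_m$ denote one full sweep. Since each $P_j$ is nonexpansive and fixes $C$ pointwise, $\|Tx-c\|\le\|x-c\|$ for every $c\in C$, so it suffices to prove $x_n:=T^n x_0\to P_C x_0$: the at most $m$ intermediate iterates inside a sweep are then squeezed by the same inequality applied with $c:=P_C x_0$.

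First I would set up the Pythagorean bookkeeping. For an orthogonal projection $P$ onto a closed subspace $V$ and any $y$ one has $\|y-c\|^2=\|Py-c\|^2+\|y-Py\|^2$ whenever $c\in V$. Applying this at each of the $m$ sub-steps of every sweep with $c:=P_C x_0\in C\subseteq V_j$, and telescoping over all sweeps, shows that the sum of the squared norms of all single-projection increments is at most $\|x_0-P_C x_0\|^2<\infty$. Hence $(\|x_n-P_C x_0\|)_n$ is nonincreasing (so it converges to some $d\ge 0$) and every single-projection increment tends to $0$; in particular, for each $j$, $(I-P_j)x_n\to 0$, the sweep increments $x_{n+1}-x_n\to 0$, and each sweep increment lies in $\sum_j V_j^{\perp}\subseteq C^{\perp}$, so $x_n-x_0\in C^{\perp}$ for all $n$.

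Next, weak convergence to $P_C x_0$: being bounded, $(x_n)$ has a weak cluster point $\bar x$. Inside each sweep the point obtained right after applying $P_j$ lies in $V_j$ and differs from $x_n$ by a vanishing amount, so $\bar x$ is a weak cluster point of a sequence lying in the (convex-closed, hence weakly closed) set $V_j$; thus $\bar x\in V_j$ for every $j$, i.e.\ $\bar x\in C$. Since $x_n-x_0\in C^{\perp}$ and this affine set is weakly closed, also $\bar x\in x_0+C^{\perp}$, and the two memberships force $\bar x=P_C x_0$. As every weak cluster point equals $P_C x_0$, we conclude $x_n\rightharpoonup P_C x_0$.

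The remaining step — upgrading this to $x_n\to P_C x_0$, i.e.\ $d=0$ — is the one I expect to be the genuine obstacle. Writing $w_n:=x_n-P_C x_0\in C^{\perp}$, we already know $w_n\rightharpoonup 0$, $\|w_n\|\searrow d$, and $\dist(w_n,V_j)=\|(I-P_j)x_n\|\to 0$ for each $j$; the trouble is that this only says $w_n$ approaches each $V_j$, not that it approaches $C=\bigcap_j V_j$, and in infinite dimensions the relevant linear-regularity property can fail (the Friedrichs angles between the $V_j$ may be zero), so no uniform/quantitative argument is available and one must exploit the dynamics together with a compactness-type input. Concretely I would argue: since $(\|T^n y\|)_n$ is nonincreasing for every $y$, the positive contractions $T^{*n}T^n$ decrease in the strong operator topology to some $G$ with $0\le G\le I$, and letting $n\to\infty$ in $T^{*(n+1)}T^{n+1}=T^{*}(T^{*n}T^{n})T$ yields $G=T^{*}GT$; one checks that $G$ acts as the identity on $C$, so it remains to show $\operatorname{ran}(G)\subseteq C$, after which $G^2=G$, hence $G=P_C$, and therefore $\|w_n\|^2=\langle T^{*n}T^{n}w_0,w_0\rangle\to\langle P_C w_0,w_0\rangle=0$. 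For $m=2$ this last point is von Neumann's theorem, assumed above (one proof of it identifies $\|T^n y\|^2$ with $\langle(P_2P_1P_2)^{2n-1}y,y\rangle$ and applies the spectral theorem to the positive self-adjoint contraction $P_2P_1P_2$); for general $m$ that device no longer works, and proving $\operatorname{ran}(G)\subseteq C$ — which is precisely the content of Halperin's extension — is the hard part, relying essentially on a compactness/rigidity property of products of orthogonal projections. This is, fittingly, exactly the sort of ingredient the paper sets out to circumvent for the closely related Dykstra iteration.
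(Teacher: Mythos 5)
Your setup is sound up to weak convergence: the Pythagorean telescoping correctly yields that $(\|x_n - P_C x_0\|)$ is nonincreasing, that the sub-step increments are square-summable (hence $(I-P_j)x_n \to 0$), that $x_n - x_0 \in C^{\perp}$, and then that $x_n \rightharpoonup P_C x_0$. The operator reformulation of strong convergence is also correct as far as it goes: $T^{*n}T^n$ decreases strongly to a positive self-adjoint $G$ with $G = T^*GT$ and $G|_C = I$, and the theorem reduces to $\operatorname{ran}(G) \subseteq C$, after which $G = P_C$ and $\|x_n - P_C x_0\|^2 = \langle T^{*n}T^n w_0, w_0\rangle \to \langle P_C w_0, w_0\rangle = 0$.

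The gap is exactly where you stop. Establishing $\operatorname{ran}(G) \subseteq C$ for $m \geq 3$ \emph{is} Halperin's theorem; everything preceding it is the routine Fej\'er/asymptotic-regularity part already present in von Neumann's $m=2$ case, and the reduction to $G$ repackages the conclusion rather than advancing it. You concede this yourself (``the hard part'') but supply no argument, so the proposal proves weak convergence and then names, without closing, the remaining step. Your diagnosis of that step as ``a compactness/rigidity property'' is also off: Halperin's original argument proceeds by induction on $m$ and, unlike the Boyle--Dykstra proof that the paper analyzes, does not invoke sequential weak compactness, Bolzano--Weierstrass, or infinite-pigeonhole reasoning, so the ``compactness to be circumvented'' framing conflates two rather different proofs. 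Finally, note that the paper states Halperin's theorem purely as background with a citation and gives no proof, so there is no in-paper argument to compare against; the proposal must stand on its own, and as written it is missing the decisive step.
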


The convergence of \eqref{MAP} holds more generally when the sets $V_j$ are affine subspaces (i.e. translates of subspaces) provided that their intersection is nonempty. However, if the sets are just assumed to be closed and convex then the situation is more delicate. In 1965, Bregman establish weak convergence of von Neuman's method in the general setting.
\begin{theorem}[Bregman~\cite{Bregman(1965)}]
	Let $C_1, \cdots, C_m$ be $m\geq 2$ closed convex subsets of a Hilbert space such that $\bigcap_{j=1}^mC_j\neq \emptyset$. Then, \eqref{MAP} converges weakly to a point in the intersection.
\end{theorem}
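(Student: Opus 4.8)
The plan is to follow the classical route via Fej\'er monotonicity, asymptotic regularity, and weak sequential compactness. First recall that for a nonempty closed convex set $C$ in a Hilbert space the metric projection $P_C$ is firmly nonexpansive; equivalently, for every $x$ and every $c\in C$,
\[
\|P_C x - c\|^2 \le \|x - c\|^2 - \|x - P_C x\|^2 .
\]
Since the fixed point $c\in\bigcap_{j=1}^m C_j$ belongs to each $C_j$, applying this inequality at every step of the cyclic iteration \eqref{MAP} shows that $(\|x_n - c\|)_{n}$ is nonincreasing for each $c\in\bigcap_{j=1}^m C_j$. In particular the sequence $(x_n)$ is bounded and $\lim_n\|x_n - c\|$ exists for each such $c$.

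Next I would establish asymptotic regularity. Telescoping the displayed inequality over the steps of the iteration gives $\sum_n \|x_{n+1} - x_n\|^2 \le \|x_0 - c\|^2 < \infty$, so $\|x_{n+1} - x_n\| \to 0$, and more precisely the defect of each of the $m$ projection sub-steps within a cycle tends to $0$. By the Banach--Alaoglu theorem $(x_n)$ has weakly convergent subsequences; let $p$ be any weak cluster point, say $x_{n_k} \rightharpoonup p$ with the indices $n_k$ aligned to cycle boundaries. Because the per-step defects vanish, all intermediate points produced within the $k$-th cycle also converge weakly to $p$, and each is followed by a projection onto a fixed $C_j$; invoking the demiclosedness at $0$ of $I - P_{C_j}$ (itself a consequence of firm nonexpansiveness, via the variational characterization $\langle x - P_C x,\ y - P_C x\rangle \le 0$ for all $y\in C$) we conclude $p\in C_j$ for every $j$, i.e. $p\in\bigcap_{j=1}^m C_j$.

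It remains to show that the weak cluster point is unique. Suppose $p$ and $q$ are both weak cluster points; by the previous step they both lie in $\bigcap_{j=1}^m C_j$, so $\lim_n\|x_n - p\|$ and $\lim_n\|x_n - q\|$ exist. From the identity $\|x_n - p\|^2 - \|x_n - q\|^2 = 2\langle x_n,\ q - p\rangle + \|p\|^2 - \|q\|^2$ it follows that $\langle x_n,\ q - p\rangle$ converges; evaluating its limit along a subsequence converging weakly to $p$, and along one converging weakly to $q$, yields $\langle p,\ q - p\rangle = \langle q,\ q - p\rangle$, hence $\|p - q\|^2 = 0$ and $p = q$. Thus $(x_n)$, being bounded with a single weak cluster point, converges weakly, and by the second step its limit lies in $\bigcap_{j=1}^m C_j$.

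The main obstacle is the second step: passing from the vanishing of the per-cycle projection defects to the conclusion that every weak cluster point lies in each individual set $C_j$. This is exactly where convexity (not merely closedness) of the $C_j$ is indispensable -- through the weak closedness of the $C_j$ and the demiclosedness of $I - P_{C_j}$ at $0$ -- and it is the ingredient whose reliance on sequential weak compactness the paper subsequently aims to render finitary.
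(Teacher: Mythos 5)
The paper states Bregman's theorem only as background, with a citation to \cite{Bregman(1965)}, and gives no proof of it (its own contribution concerns Dykstra's algorithm, Theorem~\ref{main} and its quantitative version). So there is no in-paper argument to compare yours against. That said, your proof is correct and is the standard one: firm nonexpansiveness of metric projections gives Fej\'er monotonicity of $(x_n)$ with respect to $\bigcap_j C_j$, telescoping yields square-summability of all per-substep displacements and hence asymptotic regularity, boundedness gives weak subsequential limits, weak closedness of each $C_j$ (or, equivalently, demiclosedness of $I-P_{C_j}$) puts every weak cluster point in the intersection, and the Opial-type computation with $\langle x_n, q-p\rangle$ forces uniqueness of the weak cluster point. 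One small nit: you speak of "the indices $n_k$ aligned to cycle boundaries," but in \eqref{MAP} every $x_n$ is already a cycle boundary, so no alignment is needed; the relevant observation is simply that since the within-cycle substep displacements tend to $0$, the intermediate points along the chosen subsequence inherit the same weak limit, and each of them lies in one of the sets $C_j$.
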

Hundal~\cite{Hundal(2004)} gave a counterexample (where the sets consist of a closed hyperplane and a closed convex cone in $\ell^2(\N)$) for which the iteration indeed doesn't converge in norm. Moreover, there are very easy examples of convex sets where \eqref{MAP} converges strongly to a point in the intersection but such point is distinct from the projection of $x_0$. A different iterative scheme was proposed by Dykstra that does converge strongly to the projection of $x_0$ in the general setting of the convex feasibility problem.

Consider $C_1, \cdots, C_{m}$ to be $m \geq 2$ closed convex subsets of a Hilbert space with nonempty intersection. For $n\geq 1$, let $C_n$ denote the set $C_{j_n}$ where $j_n:=[n-1]+1$ with $[r]:\equiv r\!\mod m$, and let $P_n$ denote the metric projection onto $C_n$. For $x_0\in X$ an initial point, Dykstra's cyclic projections algorithm is defined recursively by the equations
\begin{equation}\label{dykstra-iter}\tag{{\rm{D}}}
\begin{cases}
	x_0\in X\\
	q_{-(m-1)}=\cdots=q_0=0
\end{cases}
\, \text{ and }\, \forall n\geq 1\,
\begin{cases}
	x_n:=P_{n}(x_{n-1}+q_{n-m})\\
	q_n:=x_{n-1}+q_{n-m}-x_n
\end{cases}
\end{equation}

In 1983, Dykstra~\cite{Dykstra(1983)} proved the strong convergence in the particular case when all the sets are closed convex cones of a finite dimensional Hilbert space. The result was later extended to the general setting by Boyle and Dykstra~\cite{BoyleDykstra(1986)}.
\begin{theorem}[Boyle-Dykstra~\cite{BoyleDykstra(1986)}]\label{main}
	Let $C_1, \cdots, C_{m}$ be $m \geq 2$ closed convex subsets of a Hilbert space such that $C:=\bigcap_{j=1}^{m}C_j\neq \emptyset$. Then, for any point $x_0\in X$ the iteration $(x_n)$ generated by \eqref{dykstra-iter} converges strongly to $P_C(x_0)$.
\end{theorem}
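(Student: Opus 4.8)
The plan is to follow the classical route of Boyle and Dykstra while isolating its two genuinely substantive ingredients --- a telescoping ``energy'' estimate and a compactness step --- so that the latter can, for an elementary and quantitative treatment, be replaced by a direct Cauchy argument.

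First I would record the variational content of each projection step. Put $b_n:=x_{n-1}+q_{n-m}$, so that $x_n=P_n(b_n)$ and $q_n=b_n-x_n$. The characterization of the metric projection gives $\langle q_n,z-x_n\rangle\le 0$ for all $z\in C_n$; since $C\subseteq C_n$, this yields $\langle q_n,x_n-c\rangle\ge 0$ and hence $\|x_n-c\|^2+\|q_n\|^2\le\|b_n-c\|^2$ for every $c\in C$. Expanding $\|b_n-c\|^2=\|x_{n-1}+q_{n-m}-c\|^2$ in the two available ways and summing the resulting inequalities over a block of consecutive indices --- using crucially that $q_{-(m-1)}=\cdots=q_0=0$ to kill the boundary contributions at the start of the iteration --- I would obtain the fundamental Boyle--Dykstra (in)equality relating $\|x_N-c\|^2$, $\|x_0-c\|^2$, the sum of $\|q_n\|^2$ over the last cycle, and cross terms of the form $\langle q_n,x_{n+m-1}-x_n\rangle$ (equivalently, via the identity $x_{k-1}-x_k=q_k-q_{k-m}$, cross terms among the $q_n$). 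I expect this bookkeeping to be the most delicate part: several of the cross terms cancel only in aggregate, and organizing them so that explicit bounds fall out is precisely the computation the paper presumably streamlines.

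From this estimate I would extract, in order: (i) boundedness of $(x_n)$, hence of $(b_n)$ and $(q_n)$; (ii) $\sum_n\|x_n-x_{n-1}\|^2<\infty$, so $\|x_n-x_{n-1}\|\to 0$ and therefore $\|q_n-q_{n-m}\|\to 0$; (iii) convergence of the relevant series of cross terms; and hence (iv) that $\phi(c):=\lim_n\|x_n-c\|^2$ exists for every $c\in C$ (a quasi-Fej\'er property). To identify the limit I would use the accumulation identity $x_0-x_N=q_{N-m+1}+\cdots+q_N$ (a consequence of $x_{k-1}-x_k=q_k-q_{k-m}$ and the vanishing initial $q$'s): combined with $\langle q_n,z-x_n\rangle\le 0$ for $z\in C$, with boundedness of the $q_n$, and with $\|x_N-x_n\|\to 0$ for $n$ within the last cycle, a passage to the limit shows that any weak cluster point $v$ of $(x_n)$ lying in $C$ satisfies $\langle x_0-v,z-v\rangle\le 0$ for all $z\in C$, i.e.\ $v=P_C(x_0)$; and cluster points do lie in $C$, since $\|x_n-x_{n-1}\|\to 0$ together with the cyclic structure and the weak closedness of each $C_j$ forces $v\in C_j$ for every $j$. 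Uniqueness of the cluster point then upgrades this to weak convergence $x_n\rightharpoonup P_C(x_0)$, and strong convergence follows from (iv): schematically, one bound gives $\phi(v)\le\|x_0-v\|^2-\liminf_N\|x_0-x_N\|^2$ while the weak limit gives $\|x_0-x_N\|^2\to\|x_0-v\|^2+\phi(v)$, and the two together force $\phi(v)=0$, i.e.\ $\|x_n-P_C(x_0)\|\to 0$.

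The single place where this argument is not elementary is the appeal to weak sequential compactness of bounded sets to produce the cluster point $v$; this is also the obstruction to a rate of convergence. To circumvent it --- and to obtain rates of metastability --- I would instead prove directly that $(x_n)$ is Cauchy, exploiting quantitative forms of (ii)--(iv): bounding $\|x_M-x_N\|^2$ via the parallelogram law through a nearly feasible midpoint and controlling the deficit by the (now effective) convergence of the $\phi(c)$ and of the cross-term series. In metastable terms, for a given $\eps>0$ and counterfunction $g$ one would produce $N$ with $\|x_M-x_{M'}\|<\eps$ for all $M,M'\in[N,g(N)]$, with bounds primitive recursive in $m$, $\eps$, $g$ and a bound on $\|x_0-c\|$. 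I expect this conversion of the soft compactness step into a quantitative Cauchy argument to be the real work, and presumably the heart of the paper.
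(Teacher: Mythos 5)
Your first two paragraphs trace the skeleton of the classical Boyle--Dykstra proof correctly: a projection variational inequality, a telescoping energy estimate, boundedness, square summability of $\|x_n-x_{n-1}\|$, the accumulation identity $x_0-x_N=\sum_{k=N-m+1}^{N}q_k$, weak cluster points lying in $C$ by asymptotic regularity, and Kolmogorov's criterion forcing them to equal $P_C(x_0)$. However, step (iv) --- the claim that $\phi(c):=\lim_n\|x_n-c\|^2$ exists for every $c\in C$ --- does not follow from (i)--(iii), and is not something the classical proof establishes either. What the main identity gives is that $\|x_n-c\|^2+2\sum_{k=n-m+1}^{n}\langle x_k-c,q_k\rangle$ is nonincreasing; the boundary term has no reason to vanish, and the paper is emphatic that Dykstra's iteration fails to be Fej\'er monotone. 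The classical argument instead controls the cross-term sum $\sum_{k=n-m+1}^{n}\langle x_k-x_n,q_k\rangle$ only in $\liminf$, picks a subsequence along which it tends to zero and $\|x_n\|$ converges, proves strong convergence of that subsequence, and only then upgrades to the whole sequence via the main inequality. Your ``schematic'' strong-convergence deduction via $\phi$ is therefore not sound as written.

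More importantly, your final paragraph does not anticipate the paper's actual device for removing weak compactness. You propose bounding $\|x_M-x_N\|$ via the parallelogram law through a nearly feasible midpoint, which is a Fej\'er-monotonicity tactic ill-suited here. The paper instead proves a quantitative surrogate for Kolmogorov's criterion on approximate common fixed points (Proposition~\ref{quant-projection}): for any error budget one can effectively produce a point $x$ that is simultaneously an approximate common fixed point of all the $P_j$ and satisfies $\langle x_0-x,y-x\rangle\le\eps$ for every $y$ that is a sufficiently good approximate common fixed point. Combined with metastable asymptotic regularity and the $\liminf$ cross-term estimate, this $x$ is shown to be close to some $x_{n_0}$ (Proposition~\ref{prop-bypass}); feeding $x$ as the reference point $z$ into the main inequality \eqref{main_inequality} then yields the metastability bound of Theorem~\ref{main_quant} directly, with no compactness and no monotonicity hypothesis. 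The infinitary Theorem~\ref{main} is then recovered elementarily in Remark~\ref{true-finitization}: metastability gives the Cauchy property, completeness gives a limit $z$, asymptotic regularity and continuity of the $P_j$ place $z\in C$, and the $\liminf$ estimate together with Kolmogorov's criterion identify $z=P_C(x_0)$.
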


When the $C_i$'s are closed vector subspaces (or more generally, closed affine subspaces), the projection is a linear map and it is easy to see that the scheme \eqref{dykstra-iter} reduces to \eqref{MAP}.
It may be helpful to think of Dykstra's algorithm as operating in stages: it starts with some initial guess $x_0\in X$ and by setting auxiliary terms $q_{-(m-1)}, \cdots, q_{-1},q_{0}$ to zero. One is then able to compute $x_1, \cdots, x_m$. Using these points, we can now \emph{update} the values of the auxiliary terms, namely computing $q_1, \cdots, q_{m}$. After this, the process repeats and will approximate in norm the projection of $x_0$ onto the feasibility set.


Dykstra's cyclic projections algorithm is an attractive method for strongly approximating the intersection of closed convex sets in a Hilbert space as, while it converges to the feasible point closest to $x_0$, it only requires knowledge about the projections onto the individual convex sets $C_j$. However, contrary to \eqref{MAP} not much is known regarding its quantitative information. For the particular case when the sets $C_j$ are closed half-spaces, i.e. the intersection is a polyhedral subset of $X$, Deutsch and Hundal~\cite{DeutschHundal(1994)} obtained rates of convergence for \eqref{dykstra-iter} (see \cite{Deutsch(1995)} and references therein for further discussions on rates of convergence and their applications). Strikingly, in the case of $m=2$ the rate doesn't depend on the initial point but only on an upper bound on its distance to the intersection set.

In this paper, we analyse the asymptotic behaviour of Dykstra's algorithm and provide highly uniform quantitative information in the general setting of Theorem~\ref{main}. When analysing the convergence of the sequence $(x_n)$, one considers the equivalent Cauchy formulation
\begin{equation}\label{cauchy}\tag{$\dagger$}
\forall \eps >0\ \exists n\in \N\ \forall i,j\geq n \left( \|x_i-x_j\|\leq \eps \right).
\end{equation}
Effective rates of convergence, i.e. a computable function giving a witness to the value of $n$ in \eqref{cauchy}, are in general excluded. Proof-theoretical considerations (see \cite{Kohlenbach(2008)i}) guides us to the next best possible thing
\begin{equation}\label{metastability}\tag{$\ddagger$}
	\forall \eps >0\ \forall f:\N\to\N\ \exists n\in \N\ \forall i,j\in [n;n+f(n)] \left( \|x_i-x_j\|\leq \eps \right),
\end{equation}
where $[n;n+f(n)]=\{n, n+1, \cdots, n+f(n)\}$. The statement \eqref{metastability} is noneffectively equivalent to the Cauchy property of the sequence. This reformulation of \eqref{cauchy} is long known to logicians as Kreisel's no-counterexample interpretation and was popularized by Terence Tao under the name of \emph{metastability} \cite{Tao(2008)i,Tao(2008)ii}. In line with such terminology, a function that given $\eps >0$ and $f\in\N^{\N}$ outputs a bound on $n$ in \eqref{metastability} is called a rate of metastability. General logical metatheorems (e.g.\ \cite{Kohlenbach(2005)i,GerhardyKohlenbach(2008),Pischke(2023),EngraciaFerreira(2020)}) guarantee the existence of such rates provided the proof of the convergence statement can be formalized in certain formal systems\footnote{The class of such admissible proofs is (in practice) very large, encompassing in particular most proofs in classical analysis.}. 

We provide a quantitative analysis of the proof in \cite{BauschkeCombettes(2017)} which follows closely the arguments originally given by Boyle and Dykstra \cite{BoyleDykstra(1986)}. Through our quantitative analysis, it was possible to remove the compactness principles crucial in the original proof and, in this way, obtain rates of metastability in the general setting of the convex feasibility problem which are primitive recursive (in $f$ in the sense of Kleene). Moreover, our quantitative results are a true finitization of Theorem~\ref{main}, in the sense that the infinitary result is fully recovered. In this way, we provide an elementary proof for the convergence of Dykstra's algorithm. Mathematicians naturally prefer rates of convergence to rates of metastability. However, in full generality they are usually unavailable. Furthermore, when such rates are actually available, they are frequently sensitive to the parameters of the problem. In this case, one would expect a rate of convergence to depend on the specifics of the underlying space, of the convex sets and on the initial point on which the iteration is initiated. This should be compared with the uniformity exhibited by the rate of metastability obtain in Theorem~\ref{main_quant}: it only depends on the number of convex sets and on a bound to the distance of $x_0$ and the intersection set. For the general case augmented with a regularity assumption on the convex sets, we show that it is possible to obtain uniform rates of convergence. From this, we derive rates of convergence for the case of basic semi-algebraic convex sets in $\R^n$ (which in particular covers the polyhedral case in finite dimension). We furthermore argue that the setting with this regularity assumption is actually the only one where uniform rates of convergence are possible, in particularly encompassing the rate of convergence obtained by Hundal and Deutsch for the polyhedral case with $m=2$.

The finitary study in this paper is set in the context of the `\emph{proof mining}' program (\cite{Kohlenbach(2008)i}, see also the recent survey \cite{Kohlenbach(2019)i}) where proof-theoretical techniques are employed to analyze \textit{prima facie} noneffective mathematical proofs with the goal of obtaining additional information. With a broad range of applications, the proof mining program has been in particular very successful in the study of results in convex optimization. Its applications in the study of algorithms approximating fixed points of nonexpansive maps in convex sets (like Krasnoselski-Mann and Ishikawa in normed spaces  \cite{Kohlenbach(2001)i,Kohlenbach(2001)ii,Kohlenbach(2003)} and \cite{Leustean(2007)i,Leustean(2010)} in a geodesic setting, Browder's implicit scheme \cite{Kohlenbach(2011)i,FerreiraLeusteanPinto(2019),KohlenbachSipos(2021)}, Halpern type iterations \cite{Kohlenbach(2011)i,KohlenbachLeustean(2012)i,Koernlein(2015),FerreiraLeusteanPinto(2019),KohlenbachPinto(2022)}, etc.) has been very fruitful providing new quantitative information (either in the form of rates of convergence or rates of metastability) and frequent even qualitative improvements and generalizations (as for example shown in the recent work \cite{KohlenbachPinto(2022),Sipos(2022)ii,DinisPinto(2023)ii}). Similarly, proof mining has also been an important tool in the quantitative study of methods converging towards zeros of monotone (or accretive) operators (with the study of the well-known proximal point algorithm in \cite{KohlenbachLeusteanNicolae(2018),KohlenbachLopes-AcedoNicolae(2019),Kohlenbach(2021)}, an abstract version in a geodesic setting \cite{LeusteanNicolaeSipos(2018),Sipos(2022)i,Sipos(2022)ii}, and of several Halpern-type variants in \cite{Pinto(2021),LeusteanPinto(2021),Kohlenbach(2020),Kohlenbach(2022)}). A closer connection with the CFP can be found in its applications to splitting methods e.g.\ \cite{DinisPinto(2021)ii,DinisPinto(2023)i,DinisPinto(2023)ii,LeusteanPinto(2023)}, in particular in the previous studies of \eqref{MAP} in \cite{Kohlenbach(2016),KohlenbachLeusteanNicolae(2018),KohlenbachLopes-AcedoNicolae(2019)}. An important work was the study of Bauschke's solution to the zero displacement problem \cite{Bauschke(2003)} regarding the potentially
`inconsistent feasibility problem', carried out in \cite{Kohlenbach(2019)ii}, also in the context of the proof mining program (see \cite{KohlenbachPischke(2023)}).

All these previous work point to the usefulness of proof mining and of logic-based techniques in the study of the convex feasibility problem and connected methods. That being said, we remark that such perspective and techniques only operate in the background and that the central theorems are presented in a way which doesn't assume any particular logic knowledge of the reader. Nevertheless, we allow ourselves some simple logical remarks in the final section of the paper, which we think may be of particular interest to logicians and the more inquisitive mathematician.

The paper is organized as follows. In section 2, besides recalling useful terminology and known results, we establish some technical lemmas. Sections 3 and 4 contain the main contributions of the paper. In section 3, we begin with some initial considerations on Dykstra's algorithm and by proving some results on its asymptotic regularity. Afterwards, we obtain a rate of metastablity for the iteration in the general context. In section 4, we discuss rates of convergence under the assumption that a modulus of regularity is available. Section 5 is devoted to our final considerations.

\section{Preliminaries and Lemmas}\label{pre}

\subsection{Quantitative notions}

We begin by recalling terminology for some quantitative notions. Let $(x_n)$ be a Cauchy sequence in a normed space $(X, \|\cdot\|)$.
\begin{definition}
	We say that a function $\theta:(0,\infty)\to \N$ is a Cauchy rate if
	\[
	\forall \eps >0\ \forall i,j\geq \theta(\eps) \left( \|x_i-x_j\|\leq \eps \right).
	\]
\end{definition}
If the sequence $(x_n)$ converges to some point $x\in X$ (e.g.\ if the space is complete), then any Cauchy rate will also be a rate of convergence towards $x$, i.e.
\[
\forall \eps >0\ \forall i\geq \theta(\eps) \left( \|x_i-x\|\leq \eps \right).
\]
However, considerations from computability theory tell us that effective Cauchy rates are in general excluded, and one thus looks at a reformulation, although equivalent, computationally weaker.
\begin{definition}
	We call a function $\Theta:(0,\infty)\times \N^{\N}\to \N$ a rate of metastability if
	\[
	\forall \eps>0\ \forall f:\N\to\N\ \exists n\leq \Theta(\eps,f)\ \forall i,j\in [n;n+f(n)] \left(\|x_i-x_j\|\leq \eps\right),
	\]
	where $[n;n+f(n)]:=\{n, n+1,\cdots, n+f(n)\}$.
\end{definition}

The following result is folklore (see e.g.\ \cite{KohlenbachPinto(2022)}).
\begin{proposition}\label{rate-meta}
	A function $\theta:(0,\infty)\to\N$ is a Cauchy rate if and only if the function $\Theta:(\eps,f)\mapsto\theta(\eps)$ is a rate of metastability.
\end{proposition}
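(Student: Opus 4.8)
The plan is to prove the two implications separately; both are elementary, and the content is essentially unwinding the definitions together with the trivial observation that $\{n,n+1,\dots,n+f(n)\}\subseteq\{k:k\geq n\}$. First I would treat the forward direction. Suppose $\theta$ is a Cauchy rate, fix $\eps>0$ and an arbitrary $f:\N\to\N$, and set $n:=\theta(\eps)$; then trivially $n\leq\Theta(\eps,f)=\theta(\eps)$, and for any $i,j\in[n;n+f(n)]$ we have $i,j\geq n=\theta(\eps)$, so $\|x_i-x_j\|\leq\eps$ by the defining property of the Cauchy rate. Hence $\Theta$ is a rate of metastability.

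For the converse, suppose $\Theta:(\eps,f)\mapsto\theta(\eps)$ is a rate of metastability; I must show $\theta$ is a Cauchy rate, i.e.\ that $\|x_i-x_j\|\leq\eps$ whenever $i,j\geq\theta(\eps)$. Fix $\eps>0$ and fix $i,j\geq\theta(\eps)$; put $K:=\max\{i,j\}$. The key move is to choose the counterfunction $f$ cleverly: take the \emph{constant} function $f\equiv K$ (or, a bit more carefully, any $f$ with $f(k)\geq K$ for all $k\leq\theta(\eps)$, e.g.\ $f:k\mapsto K$). Applying the hypothesis to this $\eps$ and this $f$ yields some $n\leq\Theta(\eps,f)=\theta(\eps)$ with $\|x_{i'}-x_{j'}\|\leq\eps$ for all $i',j'\in[n;n+f(n)]$. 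Since $n\leq\theta(\eps)\leq i,j$ and $n+f(n)=n+K\geq K=\max\{i,j\}$, both $i$ and $j$ lie in the interval $[n;n+f(n)]$, so $\|x_i-x_j\|\leq\eps$, as required.

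There is no serious obstacle here; the only point requiring the slightest care is making sure, in the converse direction, that the chosen $f$ forces the fixed pair $i,j$ into the metastable window $[n;n+f(n))$ \emph{regardless} of which $n\leq\theta(\eps)$ the metastability statement happens to hand back. A constant function large enough to dominate $\max\{i,j\}$ does this uniformly, since $n\geq 0$ guarantees the right endpoint $n+f(n)$ already exceeds $\max\{i,j\}$ and $n\leq\theta(\eps)\leq\min\{i,j\}$ guarantees the left endpoint is small enough. One then notes that the direction ``Cauchy rate $\Rightarrow$ rate of convergence'' already recorded in the text shows $\theta$ is moreover a rate of convergence when $X$ is complete, but that is not needed for the stated equivalence.
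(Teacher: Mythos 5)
Your proof is correct. The paper does not supply its own proof of this proposition (it cites it as folklore), but your argument is exactly the standard one: the forward direction is immediate from the inclusion $[n;n+f(n)]\subseteq\{k:k\geq n\}$ with $n=\theta(\eps)$, and the converse is the usual trick of choosing a constant counterfunction $f\equiv\max\{i,j\}$ so that the metastable window $[n;n+f(n)]$ is forced to contain the given pair $i,j$ for every $n\leq\theta(\eps)$. Both steps are carried out correctly, and the brief justification in the converse direction — that $n\leq\theta(\eps)\leq\min\{i,j\}$ gives the left endpoint and $n+f(n)\geq K=\max\{i,j\}$ gives the right — is exactly the point that needs checking.
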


\subsection{Lemmas}

 Throughout, let $(X, \langle\cdot, \cdot\rangle,\|\cdot\|)$ be a (real) Hilbert space with inner product $\langle\cdot,\cdot\rangle$ and norm $\|\cdot\|$. We also use the notation $[a;b]:=[a,b]\cap\N$ and $\N^*:=\N\setminus\{0\}$. In this section we collect known results and prove some useful technical lemmas. Recall the following characterization of the metric projection, known as Kolmogorov's criterium. 
\begin{proposition}\label{Kolmogorov}
	Let $C$ be a nonempty, closed and convex subset of $X$. Then, every point $u\in X$ has a unique best approximation on $C$, which we denote by $P_C(u)$ and call the projection of $u$ onto $C$. Furthermore, $P_C(u)$ is the unique element of $C$ satisfying 
	\[
	\forall y\in C\ \left(\langle u-P_C(u), y-P_C(u)\rangle \leq 0\right).
	\]
\end{proposition}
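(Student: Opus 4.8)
The plan is to prove the statement in three stages: existence of a best approximation, the variational inequality, and uniqueness (the last coming essentially for free from the inequality). For \textbf{existence}, fix $u\in X$ and put $d:=\inf_{y\in C}\|u-y\|$, which is finite since $C\neq\emptyset$, and choose a minimizing sequence $(y_k)$ in $C$ with $\|u-y_k\|\to d$. Applying the parallelogram identity to $u-y_k$ and $u-y_l$, and using that $\tfrac{1}{2}(y_k+y_l)\in C$ by convexity, one gets
\[
\|y_k-y_l\|^2 = 2\|u-y_k\|^2 + 2\|u-y_l\|^2 - 4\left\|u-\tfrac{y_k+y_l}{2}\right\|^2 \le 2\|u-y_k\|^2 + 2\|u-y_l\|^2 - 4d^2,
\]
whose right-hand side tends to $0$; hence $(y_k)$ is Cauchy. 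By completeness of $X$ it converges, and by closedness of $C$ the limit $p$ lies in $C$, while continuity of the norm gives $\|u-p\|=d$.

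For the \textbf{variational inequality}, I would prove both implications. Forward: if $p\in C$ is any best approximation and $y\in C$, then by convexity $p+t(y-p)\in C$ for $t\in[0,1]$, so
\[
\|u-p\|^2 \le \|u-p-t(y-p)\|^2 = \|u-p\|^2 - 2t\langle u-p, y-p\rangle + t^2\|y-p\|^2 ;
\]
dividing by $t>0$ and letting $t\to 0^+$ yields $\langle u-p, y-p\rangle\le 0$. Conversely, if $p\in C$ satisfies $\langle u-p, y-p\rangle\le 0$ for all $y\in C$, expanding $\|u-y\|^2 = \|u-p\|^2 + 2\langle u-p, p-y\rangle + \|p-y\|^2$ and noting $\langle u-p, p-y\rangle = -\langle u-p, y-p\rangle\ge 0$ shows $\|u-y\|\ge\|u-p\|$, so $p$ is a best approximation.

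For \textbf{uniqueness}, since by the forward implication every best approximation satisfies the inequality, it suffices to show that at most one point of $C$ can satisfy it: if $p$ and $p'$ both do, substituting $y=p'$ in the inequality for $p$ and $y=p$ in the inequality for $p'$ and adding makes the cross terms combine into $\|p-p'\|^2\le 0$, hence $p=p'$. This gives simultaneously the uniqueness of the best approximation and the assertion that $P_C(u)$ is the unique solution of the stated variational inequality. No genuine obstacle is anticipated — the argument is entirely elementary; the only point needing attention is the interplay of hypotheses in the existence step, where convexity keeps midpoints of the minimizing sequence inside $C$, completeness of $X$ supplies a limit, and closedness of $C$ keeps that limit in $C$.
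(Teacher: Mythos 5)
Your proof is correct and is the classical textbook argument for the Hilbert space projection theorem together with Kolmogorov's variational characterization. Note that the paper itself merely \emph{recalls} this proposition as a well-known fact and supplies no proof of its own, so there is nothing in the source to compare against; your three-stage argument (existence via the parallelogram law applied to a minimizing sequence, the forward/backward implications of the variational inequality obtained by perturbing along segments and by expanding $\|u-y\|^2$ around $p$, and uniqueness by adding the two inequalities to produce $\|p-p'\|^2\le 0$) is exactly the standard one and fills that gap cleanly. All the hypotheses are used where they should be: convexity to keep midpoints and segment points in $C$, completeness of $X$ for convergence of the minimizing sequence, and closedness of $C$ to retain the limit.
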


We shall require a quantitative version of the proposition above. The first quantitative studies on the metric projection featured in \cite{Kohlenbach(2010)} and \cite{Kohlenbach(2011)i}. The version used here follows the formulation proven in \cite{FerreiraLeusteanPinto(2019),Pinto(2019)}. In \cite{DinisPinto(2023)ii}, it was used in an `$\eps/\delta$-formulation' for the general nonlinear setting of CAT(0) spaces. Let $\overline{B}_r(p)$ denote the closed ball of radius $r\geq 0$ centred at $p\in X$, i.e.\ $\overline{B}_r(p):=\{x\in X : \|x-p\|\leq r\}$.
\begin{proposition}\label{quant-projection}
	Given $u\in X$, let $b\in\N^*$ be such that $b\geq \|u-p\|$ for some point $p\in \bigcap_{j=1}^m C_j$. For any $\eps >0$ and function $\delta:(0,\infty)\to (0, \infty)$, there exists $\eta\geq \beta(b,\eps,\delta)$ and $x\in \overline{B}_b(p)$ such that $\bigwedge_{j=1}^m \|x-P_j(x)\|\leq \delta(\eta)$ and
	\[
	\forall y\in \overline{B}_b(p) \left( \bigwedge_{j=1}^m \|y-P_j(y)\|\leq \eta \to \langle u-x, y-x\rangle \leq \eps \right),
	\]
	where $\beta(b,\eps,\delta):=\dfrac{\varphi^2}{24b}$ with
	\[
	\varphi:=\min\left\{\tilde{\delta}^{(i)}(1) : i\leq \left\lceil\frac{4b^4}{\eps^2}\right\rceil\right\}\ \text{and}\
	\tilde{\delta}(\xi):=\min\left\{ \delta\left(\frac{\xi^2}{24b}\right), \frac{\xi^2}{24b} \right\},\ \text{for any}\ \xi>0.
	\]
\end{proposition}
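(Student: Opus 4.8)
The plan is to finitise the classical proof of Kolmogorov's criterion (Proposition~\ref{Kolmogorov}) by replacing the intersection set with its $\gamma$-enlargements and minimising $\|u-\cdot\|^2$ over them. Recalling that $P_j$ is the metric projection onto $C_j$, so $\|z-P_j(z)\|=\dist(z,C_j)$, for $\gamma>0$ put
\[
F_\gamma:=\overline{B}_b(p)\cap\bigcap_{j=1}^{m}\{z\in X:\|z-P_j(z)\|\leq\gamma\}.
\]
Since each map $z\mapsto\dist(z,C_j)$ is convex and $1$-Lipschitz, $F_\gamma$ is a closed, bounded, convex set, and it is nonempty because $p\in F_\gamma$; by Proposition~\ref{Kolmogorov} the projection $P_{F_\gamma}(u)$ exists, and we set $d(\gamma):=\|u-P_{F_\gamma}(u)\|^2=\min_{z\in F_\gamma}\|u-z\|^2\in[0,b^2]$. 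Since $\gamma'\leq\gamma$ forces $F_{\gamma'}\subseteq F_\gamma$, the function $\gamma\mapsto d(\gamma)$ is non-increasing. (As $\delta$ is strictly positive-valued, all quantities built below will be positive.)

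First I would build a finite non-increasing chain of tolerances $\eta_0\geq\eta_1\geq\cdots\geq\eta_N$, with $N:=\lceil 4b^4/\eps^2\rceil$, satisfying $\eta_{i+1}\leq\delta(\eta_i)$ for all $i<N$. This is precisely what the auxiliary function $\tilde{\delta}$ is designed for: $\delta$ is an arbitrary, not necessarily monotone, function, so a naive iteration of $\gamma\mapsto\delta(\gamma)$ need not decrease. Setting $\xi_i:=\tilde{\delta}^{(i)}(1)$ and $\eta_i:=\xi_i^2/(24b)$, the identity $\tilde{\delta}(\xi)=\min\{\delta(\xi^2/(24b)),\xi^2/(24b)\}$ gives $\xi_{i+1}=\min\{\delta(\eta_i),\eta_i\}\leq\min\{\delta(\eta_i),\xi_i\}$, and using $0<\xi_{i+1}\leq 1\leq 24b$ one reads off $\eta_{i+1}=\xi_{i+1}^2/(24b)\leq\xi_{i+1}\leq\delta(\eta_i)$ and $\eta_{i+1}\leq\xi_i^2/(24b)=\eta_i$. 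Moreover $\eta_i=\xi_i^2/(24b)\geq\varphi^2/(24b)=\beta(b,\eps,\delta)$ for every $i\leq N$, since $\xi_i\geq\varphi$ by the definition of $\varphi$.

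Next is the pigeonhole step. From $0\leq d(\eta_0)\leq d(\eta_1)\leq\cdots\leq d(\eta_N)\leq b^2$ and $N\geq 4b^4/\eps^2$, telescoping yields some $i<N$ with $d(\eta_{i+1})-d(\eta_i)\leq b^2/N\leq\eps^2/(4b^2)$. Set $\eta:=\eta_i$ (so $\eta\geq\beta(b,\eps,\delta)$) and $x:=P_{F_{\eta_{i+1}}}(u)$. Then $x\in\overline{B}_b(p)$, $\|u-x\|^2=d(\eta_{i+1})$, and since $\eta_{i+1}\leq\delta(\eta_i)=\delta(\eta)$ we obtain $\bigwedge_{j=1}^{m}\|x-P_j(x)\|\leq\eta_{i+1}\leq\delta(\eta)$, which is the first required property of $x$.

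Finally, the interpolation step recovers the inequality, mimicking the classical Kolmogorov computation. Let $y\in\overline{B}_b(p)$ satisfy $\bigwedge_{j=1}^{m}\|y-P_j(y)\|\leq\eta$, i.e.\ $y\in F_{\eta_i}$. For $t\in(0,1]$ the point $z_t:=(1-t)x+ty$ lies in $\overline{B}_b(p)$ and, by convexity of each $\dist(\cdot,C_j)$, satisfies $\dist(z_t,C_j)\leq(1-t)\eta_{i+1}+t\eta_i\leq\eta_i$; hence $z_t\in F_{\eta_i}$ and $\|u-z_t\|^2\geq d(\eta_i)$. Expanding $\|u-z_t\|^2=\|u-x\|^2-2t\langle u-x,y-x\rangle+t^2\|y-x\|^2$ and using $\|y-x\|\leq 2b$ gives
\[
2t\langle u-x,y-x\rangle\leq d(\eta_{i+1})-d(\eta_i)+4b^2t^2\leq\frac{\eps^2}{4b^2}+4b^2t^2.
\]
Taking $t:=\eps/(4b^2)$, which lies in $(0,1]$ when $\eps\leq 4b^2$ (and if $\eps>4b^2$ then already $\langle u-x,y-x\rangle\leq\|u-x\|\,\|y-x\|\leq 4b^2<\eps$, using $\|u-x\|\leq 2b$), yields $\langle u-x,y-x\rangle\leq\eps$, as required. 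The precise constant $24b$ only serves to force the chain $(\eta_i)$ to decrease while keeping $\eta_{i+1}\leq\delta(\eta_i)$, and to produce the explicit bound $\beta(b,\eps,\delta)$. The one genuinely delicate point is exactly this construction of the tolerance chain in the absence of any monotonicity hypothesis on $\delta$; once it is in place, the counting and the interpolation are routine.
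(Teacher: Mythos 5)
Your proof is correct, but it takes a genuinely different route from the paper's. The paper argues via two intermediate claims that mirror the two technical lemmas: Claim~1 produces, by a finite descent/contradiction argument, an $\eta_0\geq\varphi$ and an approximate minimizer $x$ of $\|u-\cdot\|^2$ over the $\eta_0$-almost-fixed-point set, with the weaker conclusion $\|u-x\|^2\leq\|u-y\|^2+\eps^2/(4b^2)$; Claim~2 then invokes Lemma~\ref{Lem-tech-Koh1} to transfer the almost-fixed-point property of $x$ and $y$ to every convex combination $w_t$ (this is where the factor $24b=12\cdot 2b$ is actually used), and Lemma~\ref{Lem-tech-Koh2} finishes. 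You instead introduce the $\gamma$-enlargements $F_\gamma$ directly, observe they are closed, bounded, convex and nonempty, and work with the \emph{exact} projections $P_{F_\gamma}(u)$ and the monotone quantity $d(\gamma)$; the pigeonhole step then runs on the $N+1$ values $d(\eta_i)$ rather than on a nested decreasing sequence of points. Crucially, your argument replaces Lemma~\ref{Lem-tech-Koh1} by the convexity of $z\mapsto\dist(z,C_j)=\|z-P_j(z)\|$, which puts $z_t$ in $F_{\eta_i}$ for free, and your final display is a self-contained re-derivation of the content of Lemma~\ref{Lem-tech-Koh2}. This buys a shorter, lemma-free proof and a cleaner variational picture; what it gives up is generality: the paper explicitly remarks that its argument works verbatim for arbitrary nonexpansive maps in place of metric projections, because Lemma~\ref{Lem-tech-Koh1} needs only nonexpansiveness, whereas your reliance on the convexity of the almost-fixed-point sets $F_\gamma$ is specific to metric projections and would break for general nonexpansive $T_j$. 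For the statement as given (projection maps onto convex sets) both proofs are valid, produce the same numerical $\beta$, and as you note the $24b$ in your version is purely bookkeeping to make the $\eta_i$ chain decreasing and to match the stated bound.
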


The proof of this result is an easy modification of the one in \cite{DinisPinto(2023)ii} regarding common (almost-)fixed points of two nonexpansive maps to common (almost-)fixed points of $m$ projection maps. Indeed, it is clear that the result holds \emph{mutatis mutandis} for any finite number of nonexpansive maps (and so for metric projections in Hilbert spaces). In order to convince the reader, we nevertheless include a proof. First, we require the following two technical lemmas essentially due to Kohlenbach~\cite{Kohlenbach(2011)i}.
\begin{lemma}\label{Lem-tech-Koh1}
	Let $C$ be some convex bounded subset of a normed space and $D\in\N^*$ be a bound on the diameter of $C$. Consider a nonexpansive map $T:C\to C$. Then,
	\[
	\forall \eps >0\ \forall x_1, x_2\in C \left( \bigwedge_{i=1}^2\|x_i-T(x_i)\|\leq \frac{\eps^2}{12D}\to \forall t\in [0,1] \left(\|w_t-T(w_t)\|\leq \eps\right)\right),
	\]
	where $w_t:=(1-t)x+ty$.
\end{lemma}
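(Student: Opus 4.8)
The plan is to first dispose of the uninteresting large-$\eps$ regime and then reduce everything to a single application of the quadratic identity for $\|\cdot\|^2$ available in an inner product space. If $\eps\geq D$ there is nothing to prove: for any $t\in[0,1]$ the point $w_t=(1-t)x_1+tx_2$ lies in $C$ by convexity, and $T(w_t)\in C$ since $T$ maps $C$ into itself, so $\|w_t-T(w_t)\|\leq D\leq\eps$. Hence assume $\eps<D$ and set $\delta:=\eps^2/(12D)$; note that then $\delta<D$.

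Now fix $x_1,x_2\in C$ with $\|x_1-T(x_1)\|\leq\delta$ and $\|x_2-T(x_2)\|\leq\delta$, fix $t\in[0,1]$, and write $d:=\|x_1-x_2\|\leq D$, $w:=w_t$ and $u:=T(w)$. Since $T$ is nonexpansive, $\|u-T(x_i)\|=\|T(w)-T(x_i)\|\leq\|w-x_i\|$ for $i=1,2$, and hence by the hypothesis and the triangle inequality
\[
\|u-x_1\|\leq\|w-x_1\|+\delta=td+\delta,\qquad \|u-x_2\|\leq\|w-x_2\|+\delta=(1-t)d+\delta.
\]

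The crux is to feed these two bounds into the identity
\[
\|w-u\|^2=(1-t)\,\|x_1-u\|^2+t\,\|x_2-u\|^2-t(1-t)\,d^2,
\]
which holds because $w=(1-t)x_1+tx_2$ and $X$ is an inner product space (expand the left-hand side). After substituting and expanding, the three terms of order $d^2$ cancel exactly, leaving
\[
\|w-u\|^2\leq 4t(1-t)\,d\,\delta+\delta^2\leq d\,\delta+\delta^2\leq 2D\delta,
\]
using $t(1-t)\leq\tfrac14$, $d\leq D$ and $\delta<D$. Thus $\|w_t-T(w_t)\|=\|w-u\|\leq\sqrt{2D\delta}=\eps/\sqrt6<\eps$, and since $x_1,x_2$ and $t$ were arbitrary the lemma follows.

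The one step I expect to require care is the displayed quadratic identity, and it is there that the geometry of the space enters: it is what forces the error to be of order $\sqrt{d\delta}$ rather than of order $d$. Since $X$ is the Hilbert space fixed throughout this section the identity is immediate, and the $d^2$-cancellation does the rest; I anticipate no other obstacle, the factor $12$ being deliberately generous so as to absorb the crude bounds $t(1-t)\leq\tfrac14$ and $\delta^2\leq D\delta$ used at the end. (For a space that is merely normed one would replace the identity by the convexity inequality for $\|\cdot\|^2$, which holds in Hilbert and in $\mathrm{CAT}(0)$ spaces but may otherwise fail.)
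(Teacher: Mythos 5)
Your argument is correct and complete in the setting where the lemma is actually needed: the two nonexpansiveness estimates $\|T(w)-x_i\|\le \|w-x_i\|+\delta$, the identity $\|(1-t)a+tb\|^2=(1-t)\|a\|^2+t\|b\|^2-t(1-t)\|a-b\|^2$, and the exact cancellation of the $d^2$-terms indeed give $\|w_t-T(w_t)\|^2\le 4t(1-t)\,d\delta+\delta^2\le 2D\delta=\eps^2/6$ when $\eps<D$, and the case $\eps\ge D$ is trivial by the diameter bound. Note that the paper does not prove this lemma at all; it is quoted as ``essentially due to Kohlenbach'', and Kohlenbach's original argument is this same quadratic-identity computation in the Hilbert setting, so you have reconstructed the intended proof rather than found a different route.

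The one point worth stressing is the one you flagged yourself: your proof uses the inner product, whereas the statement as printed says ``normed space''. This is a defect of the statement, not a gap in your proof, since in a general normed space the assertion is false. In $\R^2$ with the maximum norm take $C=[-1,1]^2$ (so $D=2$) and $T(a,b):=(a,|a|)$; this $T$ is nonexpansive and maps $C$ into $C$, the points $(\pm 1,1)$ are exact fixed points, yet their midpoint $(0,1)$ satisfies $\|w_{1/2}-T(w_{1/2})\|_\infty=1$, so no threshold of the form $\eps^2/(12D)$ (indeed no threshold at all) can force $w_t$ to be an $\eps$-fixed point. The lemma must therefore be read in the Hilbert space $X$ fixed for this section (as the inner product appearing in Lemma~\ref{Lem-tech-Koh2} also indicates), and that is exactly how Proposition~\ref{quant-projection} uses it, namely for the projections $P_j$ restricted to $\overline{B}_b(p)$; so restricting to the inner-product setting, as you do, costs nothing.
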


\begin{lemma}\label{Lem-tech-Koh2}
	Let $X$ be a normed space and for each $x,y\in X$ and $t\in [0,1]$ write $w_t:=(1-t)x+ty$. Then, for any $u,x,y\in X$
	\[
	\forall \eps\in(0,b^2] \left( \forall t\in[0,1] \left(\|u-x\|^2\leq \|u-w_t\|^2+\frac{\eps^2}{D^2}\right) \to \langle u-x, y-x\rangle \leq \eps \right),
	\]
	where $D\in\N^*$ is such that $D\geq \|x-y\|$.
\end{lemma}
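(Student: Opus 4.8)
The plan is to expand the square and reduce the assertion to a one-line optimisation in the single variable $t$. Writing $w_t=x+t(y-x)$ and using the inner product, for every $t\in[0,1]$ one has
\[
\|u-w_t\|^2=\|u-x\|^2-2t\langle u-x,y-x\rangle+t^2\|y-x\|^2 .
\]
Hence, after cancelling $\|u-x\|^2$, the hypothesis $\|u-x\|^2\le\|u-w_t\|^2+\eps^2/D^2$ becomes the single family of scalar inequalities
\[
2t\,\langle u-x,y-x\rangle\le t^2\|y-x\|^2+\frac{\eps^2}{D^2}\qquad(t\in[0,1]).
\]
Setting $c:=\langle u-x,y-x\rangle$ and $a:=\|y-x\|^2$ we have $0\le a\le D^2$, and the goal is $c\le\eps$. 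If $c\le 0$ this is trivial since $\eps>0$, so I would assume $c>0$; then also $a>0$, since $a=0$ would force $y=x$ and hence $c=0$.

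Next I would split on whether the unconstrained maximiser $t^\ast:=c/a$ of $t\mapsto 2ct-at^2$ lies in $[0,1]$. If $c\le a$, substituting $t:=c/a\in(0,1]$ gives $c^2/a\le\eps^2/D^2$, so $c^2\le (a/D^2)\eps^2\le\eps^2$ and $c\le\eps$. If instead $c>a$, the expression $2ct-at^2$ is increasing on $[0,1]$, so I substitute $t:=1$ and obtain $2c-a\le\eps^2/D^2$; combined with $a\le c$ this yields $c\le 2c-a\le\eps^2/D^2\le\eps$, the final step using $\eps\le D^2$. In both cases $\langle u-x,y-x\rangle\le\eps$, which is the conclusion.

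I do not expect any serious obstacle: the whole argument is elementary and uses nothing beyond the parallelogram-type expansion, so it goes through verbatim in any inner product space without compactness or completeness. The two places that merit a moment's care are (i) verifying that the chosen value of $t$ indeed lies in the admissible interval $[0,1]$ — which is precisely what the case distinction $c\lessgtr a$ arranges — and (ii) the last comparison $\eps^2/D^2\le\eps$, i.e.\ $\eps\le D^2$; this is exactly the point at which the hypothesis $\eps\in(0,b^2]$ is used (with $b\le D$ in the intended applications, $D$ arising from a diameter bound), and it is the reason the statement is restricted to small $\eps$.
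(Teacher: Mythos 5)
Your argument is correct and complete. The paper does not actually supply a proof of this lemma; it is stated as a black box and attributed to Kohlenbach~\cite{Kohlenbach(2011)i}, so there is nothing in the present text to compare against. Your route -- expanding $\|u-w_t\|^2=\|u-x\|^2-2t\langle u-x,y-x\rangle+t^2\|y-x\|^2$, cancelling, setting $c=\langle u-x,y-x\rangle$ and $a=\|y-x\|^2\le D^2$, and then plugging in $t=\min\{c/a,1\}$ -- is the standard and essentially only proof, and matches what Kohlenbach does. Two small remarks. First, the lemma says ``normed space'' but involves an inner product; it must of course be read as a (pre-)Hilbert space, which is how you treat it. Second, you correctly locate the role of the hypothesis $\eps\in(0,b^2]$: what the last step actually needs is $\eps\le D^2$, and since in the paper's intended use $D\ge b$ (indeed $D=2b$ with $b\in\N^*$), the stated bound $\eps\le b^2$ is a sufficient, slightly stronger, hypothesis. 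Your case split at $c\lessgtr a$ is exactly what is needed to keep the test value of $t$ inside $[0,1]$, and the degenerate cases $c\le 0$ and $a=0$ are handled cleanly. No gap.
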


\begin{proof}[Proof of Proposition~\ref{quant-projection}]
	Let $\eps>0$ and a function $\delta:(0,\infty)\to (0, \infty)$ be given. We structure the argument in two central claims.\\
	\textbf{Claim 1.} There exist $\eta\geq \varphi$ and $x\in \overline{B}_b(p)$ such that $\bigwedge_{j=1}^m\|x-P_j(x)\|\leq \tilde{\delta}(\eta)$ and
	\[
	\forall y\in \overline{B}_b(p) \left( \bigwedge_{j=1}^m \|y-P_j(y)\|\leq \eta \to \|u-x\|^2\leq \|u-y\|^2 + \frac{\eps^2}{4b^2} \right).
	\]
	\textbf{Proof of Claim 1.} Assume towards a contradiction that for all $\eta\geq \varphi$ and $x\in\overline{B}_b(p)$ such that $\bigwedge_{j=1}^m \|x-P_j(x)\|\leq \tilde{\delta}(\eta)$,
	\begin{equation}\label{eq1-projection}
	\exists y\in\overline{B}_b(p) \left( \bigwedge_{j=1}^m\|y-P_j(y)\|\leq \eta\ \land\ \|u-y\|^2<\|u-x\|^2-\frac{\eps^2}{4b^2} \right).
	\end{equation}
	We then define a finite sequence $z_0, \cdots, z_r$ with $r:=\left\lceil\frac{4b^4}{\eps^2}\right\rceil$ as follows. Take $z_0$ to be $p$. In particular, $z_0\in \overline{B}_b(p)$ and $\|z_0-P_j(z_0)\|\leq \tilde{\delta}^{(r)}(1)$ for all $j\in[1;m]$. Consider that for $i\leq r-1$, we have $z_i\in\overline{B}_b(p)$ such that $\|z_i-P_j(z_i)\|\leq \tilde{\delta}^{(r-i)}(1)$ for all $j\in[1;m]$. Then, by \eqref{eq1-projection} there is some $y\in\overline{B}_b(p)$ such that
	\[
	\bigwedge_{j=1}^m\|y-P_j(y)\|\leq \tilde{\delta}^{(r-i-1)}(1)\ \text{and}\ \|u-y\|^2<\|u-z_i\|^2-\frac{\eps^2}{4b^2}.
	\]
	We define $z_{i+1}$ to be one such $y$. By construction $\|u-z_{i+1}\|^2<\|u-z_i\|^2-\eps/4b^2$ for all $i< r$, and so we obtain
	\[
	\|u-z_r\|^2<\|u-z_0\|^2-r\frac{\eps^2}{4b^2}\leq b^2-\frac{4b^4}{\eps^2}\frac{\eps^2}{4b^2}=0.
	\]
	which is a contradiction and concludes the proof of the claim.\hfill$\blacksquare$\\
	
	\noindent In the following, consider $\eta_0\geq \varphi$ and $x\in \overline{B}_b(p)$ as per Claim 1, and define $\eta_1:=\frac{\eta_0^2}{24b}$ which is bounded below by $\beta(b,\eps,\delta)$.\\
	\textbf{Claim 2.} For all $y\in\overline{B}_b(p)$ and $t\in [0,1]$,
	\[
	\bigwedge_{j=1}^m\|y-P_j(y)\|\leq \eta_1 \to \|u-x\|^2\leq \|u-w_t\|^2+\frac{\eps^2}{4b^2}.
	\]
	\textbf{Proof of Claim 2.} By the definition of the function $\tilde{\delta}$, we have in particular
	\[
	\bigwedge_{j=1}^m\|x-P_j(x)\|\leq \eta_1.
	\]
	Now, if we consider $y\in \overline{B}_b(p)$ such that $\|y-P_j(y)\|\leq \eta_1$ for all $j\in[1;m]$, then we can apply Lemma~\ref{Lem-tech-Koh1} (with $D=2b$) to each of the projection maps restricted to $\overline{B}_b(p)$ to conclude that $\bigwedge_{j=1}^m\|w_t-P_j(w_t)\|\leq \eta_0$. By convexity, $w_t\in \overline{B}_b(p)$ and the result follows by the assumption on $\eta_0$ and $x$.\hfill$\blacksquare$\\
	
	\noindent By definition of $\tilde{\delta}$, we have $\bigwedge_{j=1}^m\|x-P_j(x)\|\leq \delta(\eta_1)$, and an application of Lemma~\ref{Lem-tech-Koh2} (with $D=2b$) concludes the proof.\qedhere	
\end{proof}

We will also make use of the following two technical lemmas. The first lemma is simply a quantitative version of the fact that any summable sequence of nonnegative real numbers must convergence towards zero. Note that, despite considerations from computability theory excluding the existence of a computable rate of convergence in general, an effective rate to its `\emph{metastable}' formulation is actually trivial to obtain.
\begin{lemma}\label{L1}
	Let $(a_n)\in \ell_{+}^1(\N)$ and consider $B\in\N$ such that $\sum a_n \leq B$. Then,
	\[
	\forall \eps>0\ \forall f:\N\to\N\ \exists n\leq \Psi(B,\eps, f)\ \forall i\in[n; n+f(n)] \left( a_i\leq \eps \right),
	\]
	where $\Psi(B,\eps, f):=\check{f}^{(R)}(0)$ with $\check{f}(p):=p+f(p)+1$ and $R:=\lfloor \frac{B}{\eps}\rfloor$.
\end{lemma}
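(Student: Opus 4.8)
The plan is a direct pigeonhole argument along a fast-growing sequence of candidate starting points. Fix $\eps>0$ and $f\colon\N\to\N$, and set $R:=\lfloor B/\eps\rfloor$. I would first define $p_0:=0$ and $p_{k+1}:=\check f(p_k)=p_k+f(p_k)+1$, so that $p_k=\check f^{(k)}(0)$; since $\check f(p)>p$ for every $p$, the sequence $(p_k)$ is strictly increasing, and in particular $p_k\leq p_R=\Psi(B,\eps,f)$ for all $k\leq R$. It therefore suffices to show that at least one of the $R+1$ points $n=p_0,p_1,\dots,p_R$ satisfies $a_i\leq\eps$ for all $i\in[p_k;p_k+f(p_k)]$.

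Suppose not. Then for each $k\in\{0,1,\dots,R\}$ there is an index $i_k\in[p_k;p_k+f(p_k)]$ with $a_{i_k}>\eps$. The key observation is that these $R+1$ intervals are pairwise disjoint: the interval attached to $p_k$ ends at $p_k+f(p_k)$, whereas the next one starts at $p_{k+1}=p_k+f(p_k)+1$. Hence $i_0,\dots,i_R$ are pairwise distinct, and since all $a_n\geq 0$,
\[
B\;\geq\;\sum_{n}a_n\;\geq\;\sum_{k=0}^{R}a_{i_k}\;>\;(R+1)\eps\;=\;\bigl(\lfloor B/\eps\rfloor+1\bigr)\eps\;>\;B,
\]
a contradiction. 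So some $p_k$ with $k\leq R$ works, and $p_k\leq p_R=\check f^{(R)}(0)=\Psi(B,\eps,f)$, which is the claimed bound. The degenerate case $B=0$ (hence $R=0$, $\Psi=0$, and $a_n\equiv 0$) is covered by the same argument, with $n=0$.

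I do not expect a genuine obstacle here; the only point that requires care is the bookkeeping on the number of candidate points. One must use $R+1$ of them — not merely $R$ — so that the resulting $R+1$ separated ``bad'' indices force $\sum a_n>(R+1)\eps>B$, rather than the weaker $\sum a_n>R\eps$, which would be consistent with $\sum a_n\leq B$. Everything else (strict monotonicity of $(p_k)$, disjointness of the intervals from the $+1$ in $\check f$, and $\check f^{(k)}(0)\leq\check f^{(R)}(0)$ for $k\leq R$) is immediate.
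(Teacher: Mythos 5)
Your proof is correct and is essentially the same argument as in the paper: both exploit that the definition of $\check f$ (via the $+1$) makes the blocks $[\check f^{(k)}(0);\check f^{(k)}(0)+f(\check f^{(k)}(0))]$ pairwise disjoint, so a ``bad'' index in each of the $R+1$ blocks forces $\sum a_n>(R+1)\eps>B$. The paper packages this as an induction on the partial sums $\sum_{i=0}^{\check f^{(r)}(0)+f(\check f^{(r)}(0))}a_i>(r+1)\eps$, while you state the disjointness and sum directly — a purely presentational difference.
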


\begin{proof}
	Let $\eps>0$ and $f:\N\to\N$ be given. Assume towards a contradiction that
	\begin{equation}\label{L1:eq1}
	\forall n\leq \Psi(B,\eps,f)\ \exists i\in[n;n+f(n)] \left(a_i>\eps\right).
	\end{equation}
	By induction we see that for all $r\leq R$,
	\begin{equation}\label{L1:eq2}
		\sum_{i=0}^{\check{f}^{(r)}(0)+f(\check{f}^{(r)}(0))}a_i > (r+1)\eps.
	\end{equation}
	For $r=0$, by \eqref{L1:eq1} there is $i_0\in[0;f(0)]$ such that $a_{i_0}>\eps$, and so $\sum_{i=0}^{f(0)}a_i>\eps$. Assume now that \eqref{L1:eq2} holds for some $r<R$. Then, by \eqref{L1:eq1} with $n=\check{f}^{(r+1)}(0)$ (which is $\leq \Psi(B,\eps,f)$), there is $i_{r+1}\in [\check{f}^{(r+1)}(0); \check{f}^{(r+1)}(0)+f(\check{f}^{(r+1)}(0))]$ such that $a_{i_{r+1}}>\eps$. Thus,
	\begin{align*}
	\sum_{i=0}^{\check{f}^{(r+1)}(0)+f(\check{f}^{(r+1)}(0))} a_i &= \sum_{i=\check{f}^{(r+1)}(0)}^{\check{f}^{(r+1)}(0)+f(\check{f}^{(r+1)}(0))} a_i + \sum_{i=0}^{\check{f}^{(r)}(0)+f(\check{f}^{(r)}(0))}a_i\\
	&> \eps + (r+1)\eps=(r+2)\eps,
	\end{align*}
	concluding the induction. We now have a contradiction from \eqref{L1:eq2} with $r=R$, since it entails that there is $N\in\N$ such that
	\[
	\sum_{i=0}^{N} a_i > (R+1)\eps \geq B.\qedhere
	\]
	
\end{proof}

The next lemma corresponds to a quantitative version of \cite[Lemma~30.6]{BauschkeCombettes(2017)} (which itself is a variant of a technical lemma proved by Dykstra).
\begin{lemma}\label{L2}
Let $(a_n)\in\ell_{+}^2(\N)$ and consider $B \in \N$ such that $\sum a_n^2 \leq B$. For all $n\in\N$, set $s_n:=\sum_{k =0}^{n} a_k$, and let $m\geq 2$ be given. Then, 
\[
\varliminf s_{n}(s_{n}-s_{n-m-1})=0\ \text{with}\ \liminf\text{-rate}\ \phi_B(m,\eps,N):=\left\lfloor e^{\left(\frac{(m+1)B}{\eps}\right)^2}\right\rfloor\cdot (N+1),
\]
i.e.
\begin{equation*}
	\forall \eps>0\ \forall N\in \N\ \exists n\in[N; N+\phi_B(m,k,N)] \left( s_{n}(s_{n}-s_{n-m-1}) \leq \eps\right).
\end{equation*}
\end{lemma}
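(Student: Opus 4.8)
The plan is to argue by contradiction through a summability estimate, with the exponential dependence on $1/\eps$ in $\phi_B$ arising from the logarithmic growth of the harmonic series. Fix $\eps>0$ and $N\in\N$, put $\phi:=\phi_B(m,\eps,N)$, and adopt the convention $a_j:=0$ for $j<0$, so that $(s_n)$ is nondecreasing, $s_n-s_{n-m-1}=\sum_{j=n-m}^{n}a_j\geq 0$, and $s_j=0$ for $j<0$. Assume towards a contradiction that $s_n(s_n-s_{n-m-1})>\eps$ for every $n\in[N;N+\phi]$. In particular each such $s_n$ is positive, so dividing yields $(s_n-s_{n-m-1})^2>\eps^2/s_n^2$ for all $n\in[N;N+\phi]$.

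First I would bound $s_n^2$ from above by Cauchy--Schwarz: $s_n^2=\big(\sum_{k=0}^{n}a_k\big)^2\leq (n+1)\sum_{k=0}^{n}a_k^2\leq (n+1)B$. Hence $(s_n-s_{n-m-1})^2>\eps^2/\big((n+1)B\big)$ for all $n\in[N;N+\phi]$, and summing over this range gives $\sum_{n=N}^{N+\phi}(s_n-s_{n-m-1})^2>\frac{\eps^2}{B}\sum_{n=N}^{N+\phi}\frac{1}{n+1}$.

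Next I would bound the same sum from above using the $\ell^2$-hypothesis. Again by Cauchy--Schwarz, $(s_n-s_{n-m-1})^2=\big(\sum_{j=n-m}^{n}a_j\big)^2\leq (m+1)\sum_{j=n-m}^{n}a_j^2$; since, as $n$ runs through $[N;N+\phi]$, each index $j$ appears in at most $m+1$ of the inner sums, this gives $\sum_{n=N}^{N+\phi}(s_n-s_{n-m-1})^2\leq (m+1)^2\sum_{j}a_j^2\leq (m+1)^2B$. Combining the two estimates, $\sum_{n=N}^{N+\phi}\frac{1}{n+1}<\big((m+1)B/\eps\big)^2$, and since $\sum_{n=N}^{N+\phi}\frac{1}{n+1}=\sum_{k=N+1}^{N+\phi+1}\frac{1}{k}\geq\ln\frac{N+\phi+2}{N+1}$, we obtain $\phi<(N+1)e^{((m+1)B/\eps)^2}-N-2$.

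This contradicts the definition of $\phi$: indeed $\phi=(N+1)\lfloor e^{((m+1)B/\eps)^2}\rfloor>(N+1)\big(e^{((m+1)B/\eps)^2}-1\big)\geq (N+1)e^{((m+1)B/\eps)^2}-N-2$. Hence some $n\in[N;N+\phi_B(m,\eps,N)]$ satisfies $s_n(s_n-s_{n-m-1})\leq\eps$, which is precisely the claimed liminf-rate. The substantive point is that the naive additive telescoping of $s_n^2$ (or of $s_n(s_n-s_{n-m-1})$) is too wasteful here, because $(s_n)$ need not be bounded and essentially only $s_n^2\leq (n+1)B$ is available; this is what forces the argument through the divergence of $\sum 1/(n+1)$ and produces the exponential bound. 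The only remaining thing to watch is the harmless bookkeeping with the floor function and the additive correction $-N-2$.
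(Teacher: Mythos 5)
Your proposal is correct and takes essentially the same approach as the paper: Cauchy--Schwarz to bound $s_n^2\leq(n+1)B$, Cauchy--Schwarz again plus double-counting to bound $\sum_{n=N}^{N+\phi}(s_n-s_{n-m-1})^2\leq(m+1)^2B$, and the integral-test estimate for the harmonic sum to force the contradiction. The only cosmetic difference is that the paper phrases the contradiction hypothesis in the form $s_n-s_{n-m-1}>\eps/\sqrt{B(n+1)}$ and lands directly on $(m+1)^2B<(m+1)^2B$, whereas you keep $s_n(s_n-s_{n-m-1})>\eps$ and unwind the harmonic bound into an inequality on $\phi$; the substance is identical.
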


\begin{proof}
	Let $N \in \N$ and $\eps>0$ be given, and shorten $\phi=\phi_B(m,k,N)$. Using the Cauchy-Schwarz inequality, we have for all $n\in \N$
	\begin{equation}\label{L2:eq1}
	s_{n}\leq \sqrt{n+1}\sqrt{\sum_{k=0}^{n}a_k^2}\leq \sqrt{n+1}\sqrt{B}.
	\end{equation}
	Assume towards a contradiction that for all $n \in [N; N+\phi]$ we have 
	\begin{equation}\label{L2:eq2}
	s_{n}-s_{n-m-1}=\sum_{i=n-m}^{n} a_{i} > \frac{\eps}{\sqrt{B}\sqrt{n+1}}.
	\end{equation}
	Then, again by the Cauchy-Schwarz inequality, for all $n \in [N; N+\phi]$
	\[
	\frac{\eps^2}{B(n+1)}< \left(\sum_{i=n-m}^{n} a_{i}\right)^2 \leq (m+1)\cdot\! \sum_{i=n-m}^{n} a_{i}^2 .
	\]
	Note that, by the integral test, the definition of $\phi$ entails
	\[
	\sum_{n=N}^{N+\phi} \frac{1}{n+1} \geq \log\left(\frac{N+\phi+2}{N+1}\right) \geq \frac{(m+1)^2B^2}{\eps^2},
	\]
	and so we derive the following contradiction
	\begin{align*}
		(m+1)^2B&\leq \sum_{n=N}^{N+\phi} \frac{\eps^2}{B(n+1)}<\sum_{n=N}^{N+\phi}\left((m+1)\cdot\!\sum_{i=n-m}^{n}a_i^2\right)\\
		&=(m+1)\sum_{k=0}^{m}\sum_{i=N}^{N+\phi} a_{i-k}^2 \leq (m+1)^2B.
	\end{align*}
	Hence there must exist some $n\in[N;N+\phi]$ such that \eqref{L2:eq2} fails, and by \eqref{L2:eq1} the result follows.
\end{proof}

\section{Main results}\label{main_section}

For the remaining sections, let $x_0\in X$ be given and consider $(x_n)$ to be the iteration generated by \eqref{dykstra-iter}. We start with some facts that follow easily from the definition of the algorithm.
\begin{lemma}\label{easy-facts}
	For all $n\in\N^*$,
	\begin{enumerate}[label=$(\roman*)$]
		\item\label{eq1} $x_{n-1}-x_n=q_n-q_{n-m}$
		\item\label{eq2} $x_0-x_n=\sum_{k=n-m+1}^{n}q_k$
		\item\label{eq3} $x_n\in C_n$ and $\forall z\in C_n \left( \langle x_n-z, q_n\rangle \geq 0\right)$
		\item\label{eq4} $\langle x_n-x_{n+m}, q_n\rangle \geq 0$.
	\end{enumerate}
\end{lemma}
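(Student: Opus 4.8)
The plan is to derive all four identities directly from the defining equations \eqref{dykstra-iter}, with only part $(iii)$ additionally invoking Kolmogorov's criterium (Proposition~\ref{Kolmogorov}); no compactness or limiting argument enters. Part $(i)$ is merely a rearrangement of the update rule: since $q_n = x_{n-1} + q_{n-m} - x_n$ for all $n\geq 1$, subtracting $q_{n-m}$ gives $x_{n-1} - x_n = q_n - q_{n-m}$.

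For part $(ii)$ I would argue by induction on $n$ (equivalently, telescope $(i)$). In the base case $n=1$, part $(i)$ yields $x_0 - x_1 = q_1 - q_{1-m} = q_1$, using the initialization $q_{1-m} = q_{-(m-1)} = 0$, which agrees with $\sum_{k=2-m}^{1} q_k = q_1$ as all terms of nonpositive index vanish. For the step, write $x_0 - x_{n+1} = (x_0 - x_n) + (x_n - x_{n+1})$, apply the induction hypothesis and $(i)$ at index $n+1$, and observe that $q_{n+1-m}$ cancels the lower end of the sum while $q_{n+1}$ is appended, producing $\sum_{k=n-m+2}^{n+1} q_k$. The only subtlety is that for $n < m$ the displayed range $\{n-m+1,\dots,n\}$ contains nonpositive indices, but the corresponding $q_k$ are all zero by initialization, so the statement is correct as written.

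For part $(iii)$, membership $x_n \in C_n$ is immediate because $x_n = P_n(x_{n-1} + q_{n-m})$ lies in $C_n$ by definition of the metric projection. For the inequality, I would apply Proposition~\ref{Kolmogorov} with $u := x_{n-1} + q_{n-m}$ and $C := C_n$: the characterization reads $\langle u - x_n, z - x_n\rangle \leq 0$ for every $z \in C_n$, and $u - x_n = q_n$ by the second update equation, so $\langle x_n - z, q_n\rangle \geq 0$ for all $z \in C_n$.

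Finally, part $(iv)$ follows from $(iii)$ once we record that the cyclic choice of sets is $m$-periodic, $j_{n+m} = [n+m-1]+1 = [n-1]+1 = j_n$, whence $C_{n+m} = C_n$ and so $x_{n+m} \in C_{n+m} = C_n$; taking $z := x_{n+m}$ in $(iii)$ gives $\langle x_n - x_{n+m}, q_n\rangle \geq 0$. I expect no genuine obstacle: the entire lemma is bookkeeping, and the only points requiring minor care are the handling of indices near $0$ in $(ii)$ and the periodicity convention underlying the notation $C_n$, $P_n$ used in $(iv)$.
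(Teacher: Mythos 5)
Your proposal is correct and follows the same argument as the paper: $(i)$ is a direct rearrangement of the update rule, $(ii)$ comes from telescoping $(i)$ (the paper writes out the telescoping sum directly rather than phrasing it as an induction, but the computations are identical, including the use of the initialization $q_{-(m-1)}=\cdots=q_0=0$ to handle the nonpositive indices), $(iii)$ is Kolmogorov's criterium applied with $u = x_{n-1}+q_{n-m}$ so that $u - x_n = q_n$, and $(iv)$ specializes $(iii)$ to $z = x_{n+m}$ using the $m$-periodicity $C_{n+m}=C_n$. No gaps.
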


\begin{proof}
	Let $n\in\N^*$ be given. Fact $(i)$ follows immediately from the definition of $q_n$. Now, from $(i)$ we easily derive $(ii)$. Indeed,
	\begin{align*}
		x_0-x_n&=\sum_{k=1}^{n} x_{k-1}-x_k=\sum_{k=1}^{n} q_{k}-q_{k-m}=\sum_{k=1}^{n}q_k - \sum_{k=-(m-1)}^{n-m}q_k\\
		&=\sum_{k=1}^{n}q_k - \sum_{k=1}^{n-m}q_k=\sum_{k=n-m+1}^{n}q_k.
	\end{align*}
	The definition of $x_n$ entails $(iii)$ using the definition of $q_n$ and the characterization of the metric projection in Proposition~\ref{Kolmogorov}. Finally, point $(iv)$ is an immediate consequence of $(iii)$ as $x_{n+m}\in C_{n+m}=C_{j_{n+m}}=C_{j_n}=C_n$.
\end{proof}

We also have the following useful inequality.
\begin{lemma}\label{easy-facts2}
	For all $n\in\N$, $\sum\limits_{k=n-m+1}^{n}\|q_k\|\leq \sum\limits_{k=0}^{n-1}\|x_k-x_{k+1}\|$.
\end{lemma}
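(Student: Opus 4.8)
The plan is to prove the inequality by induction on $n$, with everything reduced to part \ref{eq1} of Lemma~\ref{easy-facts} together with the triangle inequality. For brevity write $Q_n:=\sum_{k=n-m+1}^{n}\|q_k\|$ and $S_n:=\sum_{k=0}^{n-1}\|x_k-x_{k+1}\|$, so that the claim is $Q_n\leq S_n$ for all $n\in\N$.

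First I would dispose of the base case $n=0$. Since $0-m+1=-(m-1)$, every index occurring in $Q_0$ lies in $\{-(m-1),\dots,0\}$, where $q_k=0$ by the initialization in \eqref{dykstra-iter}; hence $Q_0=0$, while $S_0$ is an empty sum, so $Q_0=0=S_0$. (More generally, $n-m+1\geq -(m-1)$ for every $n\geq 0$, so the convention $q_{-(m-1)}=\dots=q_0=0$ always covers the nonpositive indices that can appear.)

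For the inductive step, fix $n\geq 1$ and assume $Q_{n-1}\leq S_{n-1}$. The windows defining $Q_n$ and $Q_{n-1}$ overlap exactly in the block $q_{n-m+1},\dots,q_{n-1}$, so
\[
Q_n=\|q_n\|+\sum_{k=n-m+1}^{n-1}\|q_k\|=\|q_n\|+\bigl(Q_{n-1}-\|q_{n-m}\|\bigr).
\]
By Lemma~\ref{easy-facts}\ref{eq1} we have $q_n=q_{n-m}+(x_{n-1}-x_n)$, hence $\|q_n\|\leq\|q_{n-m}\|+\|x_{n-1}-x_n\|$; substituting this, the two $\|q_{n-m}\|$ terms cancel and we obtain
\[
Q_n\leq Q_{n-1}+\|x_{n-1}-x_n\|\leq S_{n-1}+\|x_{n-1}-x_n\|=S_n,
\]
which closes the induction.

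I do not expect any genuine obstacle here; the only point requiring a little care is the index bookkeeping — verifying that the overlap of the two summation windows is precisely $\{n-m+1,\dots,n-1\}$ and that all nonpositive indices fall under the zero-initialization — and the rest is the one-line application of Lemma~\ref{easy-facts}\ref{eq1} and the triangle inequality.
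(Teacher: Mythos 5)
Your proof is correct and is essentially the same argument as the paper's: induction on $n$, isolating the incoming term $\|q_n\|$ and the outgoing term $\|q_{n-m}\|$ in the windowed sum, and then invoking Lemma~\ref{easy-facts}.\ref{eq1} together with the triangle inequality (the paper writes it in the "reverse triangle" form $\|q_{n+1}\|-\|q_{n+1-m}\|\leq\|q_{n+1}-q_{n+1-m}\|=\|x_n-x_{n+1}\|$, which is the same estimate you use). The only cosmetic difference is that you step from $n-1$ to $n$ rather than from $n$ to $n+1$.
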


\begin{proof}
We argue by induction on $n\in\N$. The base case $n=0$ is trivial since $q_{-(m-1)}=\cdots=q_0=0$. For the induction step,
\begin{align*}
	\sum_{k=n-m+2}^{n+1}\|q_k\|&=\sum_{k=n-m+1}^{n}\|q_k\| + \|q_{n+1}\|-\|q_{n-m+1}\|\\
	&\overset{{\rm IH}}{\leq} \quad\sum_{k=0}^{n-1}\|x_k-x_{k+1}\| +\|q_{n+1}-q_{n+1-m}\|=\sum_{k=0}^{n}\|x_k-x_{k+1}\|,
\end{align*}
using Lemma~\ref{easy-facts}.\ref{eq1}. This concludes the proof.
\end{proof}

We now prove the main equality used throughout Dykstra's proof.
\begin{lemma}\label{lemma_main_equality}
For all $z\in X$ and $i,n\in\N$ with $i\geq n$,\footnote{Here one considers $x_{-(m-1)}, \cdots, x_{-1}$ arbitrary points in $X$.}
\begin{align}
	\|x_n-z\|^2=& \|x_i-z\|^2 + \sum_{k=n}^{i-1} \left(\|x_k-x_{k+1}\|^2 + 2\langle x_{k-m+1}-x_{k+1},q_{k-m+1}\rangle \right)\nonumber\\
	&\quad+2\sum_{k=i-m+1}^{i}\langle x_k-z,q_k\rangle - 2\sum_{k=n-m+1}^{n}\langle x_k-z,q_k\rangle,\label{main_identity}
\end{align}
and in particular
\begin{equation}\label{main_inequality}
	\|x_i-z\|^2 \leq \|x_n-z\|^2 + 2\sum_{k=n-m+1}^{n}\langle x_k-z,q_k\rangle - 2\sum_{k=i-m+1}^{i}\langle x_k-z,q_k\rangle.
\end{equation}
\end{lemma}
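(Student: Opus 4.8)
The plan is to establish the identity \eqref{main_identity} by a telescoping argument, expanding each term $\|x_k - z\|^2$ in terms of $\|x_{k+1}-z\|^2$ using the relation $x_k = P_{k+1}(x_k + q_{k-m+1})$. The key local step is the following: for each $k$ with $n \le k \le i-1$, I would write $x_k - z = (x_k - x_{k+1}) + (x_{k+1} - z)$ and hence
\[
\|x_k - z\|^2 = \|x_k - x_{k+1}\|^2 + 2\langle x_k - x_{k+1}, x_{k+1}-z\rangle + \|x_{k+1}-z\|^2.
\]
The middle cross-term is where the auxiliary sequence enters. Using Lemma~\ref{easy-facts}\ref{eq1} shifted by one index, $x_k - x_{k+1} = q_{k+1} - q_{k-m+1}$, so $\langle x_k - x_{k+1}, x_{k+1}-z\rangle = \langle x_{k+1}-z, q_{k+1}\rangle - \langle x_{k+1}-z, q_{k-m+1}\rangle$. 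I would further rewrite the second piece as $\langle x_{k-m+1}-z, q_{k-m+1}\rangle + \langle x_{k+1}-x_{k-m+1}, q_{k-m+1}\rangle$ — wait, more precisely $\langle x_{k+1}-z,q_{k-m+1}\rangle = \langle x_{k-m+1}-z, q_{k-m+1}\rangle - \langle x_{k-m+1}-x_{k+1}, q_{k-m+1}\rangle$, which produces exactly the term $2\langle x_{k-m+1}-x_{k+1}, q_{k-m+1}\rangle$ appearing in \eqref{main_identity}, together with a $\langle x_{k+1}-z, q_{k+1}\rangle$ term and a $-\langle x_{k-m+1}-z, q_{k-m+1}\rangle$ term that will telescope.

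Summing over $k$ from $n$ to $i-1$, the left side telescopes to $\|x_n-z\|^2 - \|x_i-z\|^2$ on the main term, while the two families of inner products $\sum_k \langle x_{k+1}-z, q_{k+1}\rangle$ and $\sum_k \langle x_{k-m+1}-z, q_{k-m+1}\rangle$ telescope against each other with an index shift of $m$. Specifically, $\sum_{k=n}^{i-1}\langle x_{k+1}-z,q_{k+1}\rangle = \sum_{k=n+1}^{i}\langle x_k-z,q_k\rangle$ and $\sum_{k=n}^{i-1}\langle x_{k-m+1}-z,q_{k-m+1}\rangle = \sum_{k=n-m+1}^{i-m}\langle x_k-z,q_k\rangle$; their difference collapses to the boundary sums $\sum_{k=i-m+1}^{i}$ minus $\sum_{k=n-m+1}^{n}$, up to careful bookkeeping of which endpoints survive. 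Matching these against \eqref{main_identity} pins down the coefficients, and the telescoping is purely formal once the single-step identity is correct — so I would carry out the sum indices slowly on scratch paper but present only the resulting collapsed expression.

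For the inequality \eqref{main_inequality}, the point is that the summand $\|x_k-x_{k+1}\|^2 + 2\langle x_{k-m+1}-x_{k+1},q_{k-m+1}\rangle$ is nonnegative for every $k$. The first term is trivially nonnegative; for the second, I would invoke Lemma~\ref{easy-facts}\ref{eq4}, which gives $\langle x_\ell - x_{\ell+m}, q_\ell\rangle \ge 0$ — taking $\ell = k-m+1$ yields $\langle x_{k-m+1} - x_{k+1}, q_{k-m+1}\rangle \ge 0$, exactly what is needed (this requires $k - m + 1 \ge 1$, i.e.\ $k \ge m$; for small $k$ the relevant $q$'s vanish or one handles the boundary terms directly, which is why the footnote allows $x_{-(m-1)},\dots,x_{-1}$ to be arbitrary). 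Dropping the nonnegative sum from \eqref{main_identity} and rearranging gives \eqref{main_inequality}. The main obstacle is purely clerical: getting every index shift in the telescoping exactly right, particularly the interplay between the $+m$ and $-1$ shifts and the boundary ranges $[n-m+1;n]$ and $[i-m+1;i]$; there is no conceptual difficulty, only the risk of an off-by-one error, so I would double-check by specializing to $m=2$ and small $n,i$.
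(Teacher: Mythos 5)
Your proposal is correct and is essentially the paper's proof: the single-step algebraic decomposition you describe (expanding $\|x_k-z\|^2$ through $(x_k-x_{k+1})+(x_{k+1}-z)$, substituting $x_k-x_{k+1}=q_{k+1}-q_{k-m+1}$, and splitting $\langle x_{k+1}-z,q_{k-m+1}\rangle$ to surface the $\langle x_{k-m+1}-x_{k+1},q_{k-m+1}\rangle$ term) is exactly the paper's inductive step, and your telescoping sum is just the unrolled form of that induction on $i$. Your justification of nonnegativity via Lemma~\ref{easy-facts}.\ref{eq4} (together with the observation that $q_{k-m+1}=0$ when $k<m$) is likewise the intended argument behind the paper's one-line remark that the terms in the first sum are nonnegative.
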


\begin{proof}
 The proof of identity \eqref{main_identity} is by induction on $i$. The base case $i=n$ is trivial. For $i+1$, we first see that
\begin{align*}
	\langle x_{i+1}-z, q_{i+1}-q_{i+1-m}\rangle&=\langle x_{i+1}-z, q_{i+1}\rangle\\
	&\quad -\left( \langle x_{i+1}-x_{i-m+1}, q_{i-m+1}\rangle + \langle x_{i-m+1}-z, q_{i-m+1}\rangle \right)\\
	&=\langle x_{i-m+1}-x_{i+1}, q_{i-m+1}\rangle\\
	&\quad +\langle x_{i+1}-z, q_{i+1}\rangle  -\langle x_{i-m+1}-z, q_{i-m+1}\rangle,
\end{align*}
and so, using Lemma~\ref{easy-facts}.\ref{eq1}
\begin{align*}
	\|x_i-z\|^2&=\langle (x_{i+1}-z) + (x_i-x_{i+1}), (x_{i+1}-z) + (x_i-x_{i+1})\rangle\\
	&=\|x_{i+1}-z\|^2 + \|x_i-x_{i+1}\|^2 + 2\langle x_{i+1}-z, x_i-x_{i+1}\rangle\\
	&=\|x_{i+1}-z\|^2 + \|x_i-x_{i+1}\|^2 + 2\langle x_{i+1}-z, q_{i+1}-q_{i+1-m}\rangle\\
	&=\|x_{i+1}-z\|^2 + \|x_i-x_{i+1}\|^2 + 2\langle x_{i-m+1}-x_{i+1}, q_{i-m+1}\rangle\\
	&\quad +2\langle x_{i+1}-z, q_{i+1}\rangle  -2\langle x_{i-m+1}-z, q_{i-m+1}\rangle.
\end{align*}
The induction step is now easy to verify,
\begin{align*}
	\|x_n-z\|^2&\overset{{\rm IH}}{=} \|x_i-z\|^2 + \sum_{k=n}^{i-1} \left(\|x_k-x_{k+1}\|^2 + 2\langle x_{k-m+1}-x_{k+1},q_{k-m+1}\rangle \right)\\
	&\quad+2\sum_{k=i-m+1}^{i}\langle x_k-z,q_k\rangle - 2\sum_{k=n-m+1}^{n}\langle x_k-z,q_k\rangle\\
	&=\Big(\|x_{i+1}-z\|^2 + \|x_i-x_{i+1}\|^2 + 2\langle x_{i-m+1}-x_{i+1}, q_{i-m+1}\rangle\\
	&\quad +2\langle x_{i+1}-z, q_{i+1}\rangle  -2\langle x_{i-m+1}-z, q_{i-m+1}\rangle\Big)\\
	&\quad +\sum_{k=n}^{i-1} \left(\|x_k-x_{k+1}\|^2 + 2\langle x_{k-m+1}-x_{k+1},q_{k-m+1}\rangle \right)\\
	&\quad+2\sum_{k=i-m+1}^{i}\langle x_k-z,q_k\rangle - 2\sum_{k=n-m+1}^{n}\langle x_k-z,q_k\rangle\\
	&=\|x_{i+1}-z\|^2 + \sum_{k=n}^{i} \left(\|x_k-x_{k+1}\|^2 + 2\langle x_{k-m+1}-x_{k+1},q_{k-m+1}\rangle \right)\\
	&\quad+2\sum_{k=i-m+2}^{i+1}\langle x_k-z,q_k\rangle - 2\sum_{k=n-m+1}^{n}\langle x_k-z,q_k\rangle,
\end{align*}
which concludes the induction and proves \eqref{main_identity}. To verify \eqref{main_inequality}, just note that the terms in the first sum are all nonnegative.
\end{proof}

The assumption of feasibility, $\bigcap_{j=1}^mC_j\neq \emptyset$, now entails that the iteration $(x_n)$ is bounded and $\sum \|x_k-x_{k+1}\|^2<\infty$. For the sequel, we fix some point $p\in\bigcap_{j=1}^mC_j$ and a natural number $b\in\N^*$ such that $b\geq \|x_0-p\|$.
\begin{lemma}
For all $n\in\N$,
\[
\|x_n-p\|\leq b\ \text{and}\
\sum_{k=0}^n\|x_{k}-x_{k+1}\|^2\leq b^2.
\]
\end{lemma}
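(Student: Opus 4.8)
The plan is to specialize the identity \eqref{main_identity} of Lemma~\ref{lemma_main_equality} to $z=p$ and $n=0$, and then to observe that every term on the right-hand side other than $\|x_i-p\|^2$ is (a sum of) nonnegative quantities. With $n=0$ the identity reads
\begin{align*}
\|x_0-p\|^2 = \|x_i-p\|^2 &+ \sum_{k=0}^{i-1}\left(\|x_k-x_{k+1}\|^2 + 2\langle x_{k-m+1}-x_{k+1}, q_{k-m+1}\rangle\right)\\
&+ 2\sum_{k=i-m+1}^{i}\langle x_k-p, q_k\rangle - 2\sum_{k=-m+1}^{0}\langle x_k-p, q_k\rangle,
\end{align*}
and I would argue as follows. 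The last sum vanishes since $q_{-(m-1)}=\cdots=q_0=0$ by definition of \eqref{dykstra-iter}; for the same reason each cross term $\langle x_{k-m+1}-x_{k+1}, q_{k-m+1}\rangle$ with $0\le k\le m-1$ is zero, whereas for $k\ge m$ it equals $\langle x_n-x_{n+m}, q_n\rangle$ with $n:=k-m+1\ge 1$, hence is $\ge 0$ by Lemma~\ref{easy-facts}.\ref{eq4}. Likewise, for $k\ge 1$ one has $x_k\in C_k$ and $p\in\bigcap_{j=1}^m C_j\subseteq C_k$, so $\langle x_k-p, q_k\rangle\ge 0$ by Lemma~\ref{easy-facts}.\ref{eq3}, while for $k\le 0$ this term is again $0$; thus $\sum_{k=i-m+1}^{i}\langle x_k-p, q_k\rangle\ge 0$.

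Granting these observations, the identity collapses, for every $i\in\N$, to $\|x_0-p\|^2 = \|x_i-p\|^2 + \sum_{k=0}^{i-1}\|x_k-x_{k+1}\|^2 + (\text{a sum of nonnegative terms})$, whence $\|x_i-p\|^2\le \|x_0-p\|^2\le b^2$ and $\sum_{k=0}^{i-1}\|x_k-x_{k+1}\|^2\le \|x_0-p\|^2\le b^2$. Taking $i=n$ gives $\|x_n-p\|\le b$, and taking $i=n+1$ gives the claimed bound $\sum_{k=0}^{n}\|x_k-x_{k+1}\|^2\le b^2$.

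The only point requiring a little care is the index bookkeeping at the two boundary ranges — the terms with nonpositive indices and the cross terms for small $k$ — where I must first invoke the convention $q_k=0$ for $-(m-1)\le k\le 0$ (so that the arbitrary values of $x_{-(m-1)},\dots,x_{-1}$ are irrelevant) before appealing to Lemma~\ref{easy-facts}. I expect no genuine obstacle here: boundedness of $(x_n)$ alone could even be read off directly from inequality \eqref{main_inequality} with $n=0$, but the summability estimate $\sum\|x_k-x_{k+1}\|^2\le b^2$ genuinely needs the full equality \eqref{main_identity}, since \eqref{main_inequality} discards exactly those terms.
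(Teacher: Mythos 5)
Your proof is correct and follows the same route as the paper, which likewise specializes Lemma~\ref{lemma_main_equality} to $z=p$, $n=0$ and invokes Lemma~\ref{easy-facts}.\ref{eq3} together with $q_{-(m-1)}=\cdots=q_0=0$. You also rightly observe what the paper's one-line proof glosses over: the inequality \eqref{main_inequality} cited there yields only the boundedness $\|x_n-p\|\le b$, since it is obtained from \eqref{main_identity} precisely by dropping the $\sum\|x_k-x_{k+1}\|^2$ terms, so the summability estimate genuinely requires returning to the full identity and checking, as you do, that every discarded term is nonnegative.
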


\begin{proof}
	The result follows immediately from \eqref{main_inequality} with $z=p$ and $n=0$ using Lemma~\ref{easy-facts}.\ref{eq3} and the fact that $\sum_{k=-(m-1)}^{0} \langle x_k-p,q_k\rangle =0$.
\end{proof}

Using Lemma~\ref{L2}, we derive a $\liminf$-rate for the first milestone in the convergence proof of Dykstra's algorithm.
\begin{proposition}\label{lim-inf-rate}
	We have	$\varliminf_n \sum_{k=n-m+1}^{n} |\langle x_k-x_n, q_k\rangle| = 0$,
	and moreover, for all $\eps>0$ and $N\in\N$
	\begin{equation}\label{eq5}
		\exists n\in[N;N+\Phi(b,m,\eps, N)]	\left(\sum_{k=n-m+1}^{n} |\langle x_k-x_n, q_k\rangle| \leq \eps\right),
	\end{equation}
	where $\Phi(b,m,\eps,N):=\phi_{b^2}(m,\eps,N)$, with $\phi$ as defined in Lemma~\ref{L2}.
\end{proposition}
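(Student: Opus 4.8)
The plan is to deduce the statement directly from Lemma~\ref{L2}, applied with $a_k := \|x_k - x_{k+1}\|$ and $B := b^2$; this choice is admissible because $\sum_{k}\|x_k - x_{k+1}\|^2 \le b^2$ (the bound established just before the proposition). Writing $s_n := \sum_{k=0}^{n}\|x_k - x_{k+1}\|$ as in Lemma~\ref{L2}, with the usual convention $s_n := 0$ for $n<0$, everything reduces to the pointwise estimate
\[
\sum_{k=n-m+1}^{n} |\langle x_k - x_n, q_k\rangle| \;\le\; s_n\bigl(s_n - s_{n-m-1}\bigr) \qquad \text{for all } n\in\N .
\]
Once this is in place, the $\liminf$-rate $\phi_{b^2}(m,\eps,N)$ provided by Lemma~\ref{L2} immediately gives, for all $\eps>0$ and $N\in\N$, some $n\in[N;N+\phi_{b^2}(m,\eps,N)]$ with $s_n(s_n - s_{n-m-1})\le\eps$, hence \eqref{eq5} with $\Phi(b,m,\eps,N):=\phi_{b^2}(m,\eps,N)$; letting $\eps$ and $N$ vary then yields $\varliminf_n \sum_{k=n-m+1}^{n} |\langle x_k - x_n, q_k\rangle| = 0$.

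To establish the pointwise estimate, I would bound each summand by Cauchy--Schwarz, $|\langle x_k - x_n, q_k\rangle| \le \|x_k - x_n\|\,\|q_k\|$, and control the two factors separately. For the first factor, the terms with $k\le 0$ contribute nothing since $q_k=0$ there by definition, while for $n-m+1\le k\le n$ the triangle inequality gives $\|x_k - x_n\| \le \sum_{j=k}^{n-1}\|x_j - x_{j+1}\| = s_{n-1}-s_{k-1}$; as $(s_n)$ is nondecreasing and $k-1\ge n-m$, this is at most $s_{n-1}-s_{n-m} \le s_n - s_{n-m-1}$. For the second factor, Lemma~\ref{easy-facts2} gives $\sum_{k=n-m+1}^{n}\|q_k\| \le \sum_{k=0}^{n-1}\|x_k - x_{k+1}\| = s_{n-1} \le s_n$. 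Combining the two bounds,
\[
\sum_{k=n-m+1}^{n} |\langle x_k - x_n, q_k\rangle| \;\le\; \bigl(s_n - s_{n-m-1}\bigr)\sum_{k=n-m+1}^{n}\|q_k\| \;\le\; s_n\bigl(s_n - s_{n-m-1}\bigr),
\]
which is exactly what is needed.

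There is essentially no genuine obstacle here: the argument is a short chain of elementary inequalities feeding into Lemma~\ref{L2}, and note in particular that the identity $x_0-x_n=\sum_{k=n-m+1}^n q_k$ from Lemma~\ref{easy-facts} is not even required. The only point demanding minor care is the bookkeeping at the start of the iteration (small $n$, where some indices become negative), but this is harmless: the auxiliary terms $q_{-(m-1)},\dots,q_0$ vanish and the partial sums $s_n$ are taken to be $0$ for negative indices, so all displayed inequalities hold verbatim.
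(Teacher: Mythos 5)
Your proposal is correct and follows essentially the same route as the paper: bound $\|x_k-x_n\|$ by a telescoping triangle inequality, apply Cauchy--Schwarz termwise, control $\sum_{k=n-m+1}^{n}\|q_k\|$ via Lemma~\ref{easy-facts2}, and feed the resulting product of partial sums into Lemma~\ref{L2} (instantiated with $a_k=\|x_k-x_{k+1}\|$, $B=b^2$). The only cosmetic difference is organizational: you first isolate the pointwise estimate $\sum_{k=n-m+1}^{n}|\langle x_k-x_n,q_k\rangle|\le s_n(s_n-s_{n-m-1})$ valid for all $n$ and then invoke the $\liminf$-rate, which in fact matches the statement of Lemma~\ref{L2} a bit more cleanly than the paper's inline derivation (the paper works with $s_n(s_n-s_{n-m})$, the slightly tighter quantity, noting implicitly that it is dominated by $s_n(s_n-s_{n-m-1})$).
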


\begin{proof}
	Let $\eps>0$ and $N\in\N$ be given. As we have seen $\sum \|x_k-x_{k+1}\|^2\leq b^2$ and so we can apply Lemma~\ref{L2} (instantiated with $a_n=\|x_n-x_{n+1}\|$ and $B=b^2$) to conclude that there is $n\in[N; N+\Phi(b,m,\eps, N)]$ such that
	\[
	\left(\sum_{k=n-m+1}^{n}\|x_k-x_{k+1}\|\right)\cdot\left(\sum_{k=0}^{n}\|x_k-x_{k+1}\|\right)=(s_n-s_{n-m})s_n\leq \eps.
	\]
	By triangle inequality, for all $k\in[n-m+1;n]$,
	\[
	\|x_k-x_n\|\leq \sum_{\ell=k}^{n-1} \|x_{\ell}-x_{\ell+1}\|\leq \sum_{\ell=n-m+1}^{n-1}\|x_{\ell}-x_{\ell+1}\|,
	\]
	and thus using Cauchy-Schwarz and Lemma~\ref{easy-facts2}, we get
	\begin{align*}
		\sum_{k=n-m+1}^{n} |\langle x_k-x_n, q_k\rangle|&\leq \sum_{k=n-m+1}^{n} \|x_k-x_n\|\cdot\|q_k\|\\
		&\leq \left(\sum_{k=n-m+1}^{n}\|q_k\|\right)\left(\sum_{\ell=n-m+1}^{n-1}\|x_{\ell}-x_{\ell+1}\|\right)\\
		&\leq \left(\sum_{k=0}^{n}\|x_k-x_{k+1}\|\right)\left(\sum_{k=n-m+1}^{n}\|x_k-x_{k+1}\|\right)\leq \eps,
	\end{align*}
	which, in particular, means that $\varliminf_n \sum_{k=n-m+1}^{n} |\langle x_k-x_n, q_k\rangle| = 0$.
\end{proof}

\subsection{Asymptotic regularity}

Here we discuss the asymptotic regularity of the sequence $(x_n)$. Usually, asymptotic regularity \cite{BrowderPetryshyn(1966)} is an intermediate step into establishing the convergence of algorithms approximating fixed points. While convergence says that the iteration approximates a fixed point, asymptotic regularity states that the iteration itself behaves asymptotically as a fixed point. In the previous subsection, we saw that $\sum_{k=0}^{n}\|x_k-x_{k+1}\|^2\leq b^2$. Hence by Lemma~\ref{L1} we immediately have the following result.
\begin{proposition}\label{meta_asymptotic_reg1}
	We have $\lim\|x_n-x_{n+1}\|= 0$, and moreover
	\begin{equation*}
		\forall \eps>0\ \forall f\in\N^{\N}\ \exists n\leq \Psi(b^2, \eps^2, f)\ \forall k\in [n;n+f(n)] \left(\|x_k-x_{k+1}\|\leq \eps\right),
	\end{equation*}
	where $\Psi$ is as defined in Lemma~\ref{L1}.
\end{proposition}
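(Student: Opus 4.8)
The plan is to read this off directly from Lemma~\ref{L1} applied to the \emph{squared} increments. The relevant summability fact has already been recorded just above: for every $n\in\N$ we have $\sum_{k=0}^n\|x_k-x_{k+1}\|^2\leq b^2$. So the sequence $(a_n)$ defined by $a_n:=\|x_n-x_{n+1}\|^2$ lies in $\ell^1_+(\N)$ with $\sum a_n\leq b^2$, and Lemma~\ref{L1} with $B:=b^2$ is exactly the tool needed.

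Concretely, I would fix $\eps>0$ and $f\in\N^{\N}$, and apply Lemma~\ref{L1} with the error parameter $\eps^2$ (this is why the witness comes out as $\Psi(b^2,\eps^2,f)$ rather than $\Psi(b^2,\eps,f)$): there exists $n\leq \Psi(b^2,\eps^2,f)$ such that for all $k\in[n;n+f(n)]$ one has $a_k=\|x_k-x_{k+1}\|^2\leq \eps^2$, hence $\|x_k-x_{k+1}\|\leq \eps$. That is precisely the displayed metastability statement. The qualitative claim $\lim\|x_n-x_{n+1}\|=0$ then follows, either noneffectively from the metastability statement (via Proposition~\ref{rate-meta}, or simply the standard fact that metastability implies the Cauchy/convergence property), or directly: $\sum a_n<\infty$ forces $a_n\to 0$, and hence $\|x_n-x_{n+1}\|=\sqrt{a_n}\to 0$.

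There is essentially no obstacle here; the only points requiring a moment of care are purely bookkeeping: (i) applying Lemma~\ref{L1} to $a_n=\|x_n-x_{n+1}\|^2$ and not to $\|x_n-x_{n+1}\|$ itself, since it is the sum of squares that is controlled by $b^2$, and (ii) keeping track of the resulting substitution $\eps\mapsto\eps^2$ in the first argument position of $\Psi$ after taking square roots. With these in hand the proof is a one-line invocation of Lemma~\ref{L1}, which is why the statement is phrased as an immediate corollary.
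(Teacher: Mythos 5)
Your proof is correct and is precisely what the paper does: the paper states that the result follows ``immediately'' from Lemma~\ref{L1} applied to the squared increments, which is exactly your argument, including the bookkeeping that explains the arguments $b^2$ and $\eps^2$ in $\Psi$.
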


It is now easy to show that $(x_n)$ is asymptotically regular with respect to the individual projection maps.
\begin{proposition}\label{meta_asymptotic_reg2}
	For all $j\in[1;m]$, we have $\lim \|x_n-P_j(x_n)\|=0$, and
	\[
	\forall \eps>0\ \forall f\in\N^{\N}\ \exists n\leq \alpha(b,m, \eps, f)\ \forall k\in [n;n+f(n)]
	\left(\bigwedge_{j=1}^{m} \|x_k-P_j(x_{k})\|\leq \eps\right)
	\]
	where $\alpha(b,m,\eps, f):=\Psi(b^2,\left(\frac{\eps}{m-1}\right)^2,f+m-2)$, with $\Psi$ as defined in Lemma~\ref{L1}.
\end{proposition}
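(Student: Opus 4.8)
The plan is to reduce asymptotic regularity with respect to the projections $P_j$ to the asymptotic regularity of the consecutive differences $\|x_n-x_{n+1}\|$ already obtained in Proposition~\ref{meta_asymptotic_reg1}. The key geometric observation is that, since the control sequence $(j_n)$ is cyclic of period $m$, for every $k\in\N$ and every $j\in[1;m]$ there is an index $k_j\in[k;k+m-1]$ with $j_{k_j}=j$, and then $x_{k_j}\in C_{k_j}=C_{j_{k_j}}=C_j$ by Lemma~\ref{easy-facts}.\ref{eq3}. Because $P_j(x_k)$ is by definition the \emph{best} approximation of $x_k$ in $C_j$ and $x_{k_j}\in C_j$, we immediately get
\[
\|x_k-P_j(x_k)\|\leq \|x_k-x_{k_j}\|\leq \sum_{\ell=k}^{k_j-1}\|x_\ell-x_{\ell+1}\|\leq \sum_{\ell=k}^{k+m-2}\|x_\ell-x_{\ell+1}\|,
\]
with the convention that an empty sum is $0$ (the case $k_j=k$, i.e.\ $j=j_k$, where the distance is already $0$).

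With this inequality in hand, the qualitative claim $\lim_n\|x_n-P_j(x_n)\|=0$ is immediate from $\lim_n\|x_n-x_{n+1}\|=0$. For the quantitative statement I would exploit that the right-hand side is a sum of at most $m-1$ terms: if $\|x_\ell-x_{\ell+1}\|\leq \frac{\eps}{m-1}$ for every $\ell\in[k;k+m-2]$, then $\|x_k-P_j(x_k)\|\leq \eps$ for all $j\in[1;m]$ at once.

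It then remains to produce a block of consecutive indices on which all these small-difference conditions hold. I would apply Proposition~\ref{meta_asymptotic_reg1} with error $\frac{\eps}{m-1}$ and with counterfunction $g: n\mapsto f(n)+m-2$ in place of $f$. This gives some $n\leq \Psi\!\left(b^2,\left(\frac{\eps}{m-1}\right)^2,f+m-2\right)=\alpha(b,m,\eps,f)$ with $\|x_\ell-x_{\ell+1}\|\leq \frac{\eps}{m-1}$ for all $\ell\in[n;n+f(n)+m-2]$. For any $k\in[n;n+f(n)]$ we have $[k;k+m-2]\subseteq[n;n+f(n)+m-2]$, so the displayed inequality yields $\bigwedge_{j=1}^m\|x_k-P_j(x_k)\|\leq\eps$, as required.

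I do not anticipate any serious obstacle: the core of the argument is just the triangle inequality together with the extremal property of the metric projection. The only delicate point is the bookkeeping---choosing the shifted counterfunction $f+m-2$ and the rescaled error $\frac{\eps}{m-1}$ so as to control $m-1$ consecutive differences starting anywhere in the block $[n;n+f(n)]$; note $m\geq 2$ guarantees $m-2\geq 0$, so $f+m-2\in\N^\N$.
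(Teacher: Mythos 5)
Your proposal is correct and follows essentially the same route as the paper: reduce the distance to each $P_j$ to a sum of at most $m-1$ consecutive differences $\|x_\ell-x_{\ell+1}\|$ using the fact that the iteration visits each $C_j$ within any window of $m$ consecutive indices, then invoke Proposition~\ref{meta_asymptotic_reg1} with scaled error $\eps/(m-1)$ and shifted counterfunction $f+m-2$. The paper phrases the cyclic step dually (ranging over offsets $i\in[0;m-1]$ and noting $\{P_{j_k+i}:i\in[0;m-1]\}=\{P_1,\dots,P_m\}$ rather than selecting an index $k_j$ per target $j$), but the underlying argument and the resulting bound $\alpha$ are identical.
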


\begin{proof}
	For given $\eps>0$ and $f:\N\to\N$, by Proposition~\ref{meta_asymptotic_reg1} there is $n\leq \alpha(b,m,\eps,f)$ such that
	\begin{equation}\label{eq:AsReg}
		\forall k\in [n;n+f(n)+m-2] \left( \|x_k-x_{k+1}\|\leq \frac{\eps}{m-1}\right).
	\end{equation}
	Consider $k\in [n;n+f(n)]$. By the definition, we have $x_k\in C_{j_k}$ with $j_k:=[k-1]+1$. For all $i\in[0;m-1]$, as $x_{k+i}\in C_{j_k+i}$ from the definition of the projection map $P_{j_k+i}$, we have
	\begin{align*}
		\|x_k-P_{j_k+i}(x_k)\|&\leq \|x_k-x_{k+i}\| \leq \sum_{\ell=k}^{k+i-1} \|x_{\ell}-x_{\ell+1}\|\\
		&\leq \sum_{\ell=k}^{k+m-2} \|x_{\ell}-x_{\ell+1}\| \leq (m-1)\cdot\frac{\eps}{m-1}=\eps,
	\end{align*}
	where in the last step we use the fact that $[k;k+m-2]\subset [n;n+f(n)+m-2]$ and \eqref{eq:AsReg}. The conclusion now follows from observing that for any $k\in\N$ the cyclic definition of $C_{(\cdot)}$ entails $\{P_{j_k+i} : i\in[0;m-1]\}=\{P_1, \cdots, P_m\}$.
\end{proof}

\begin{remark}\label{remark_ass-reg}
	Note that the previous argument is clearly constructive, and the reason we only obtain a rate of metastability is due to having only a metastability statement in Proposition~\ref{meta_asymptotic_reg1}. This is already clear by Proposition~\ref{rate-meta} and the fact that the counterfunction $f$ only appears in the definition of $\alpha$ when the bounding information from Proposition~\ref{meta_asymptotic_reg1} depends on the functional parameter. Indeed, if the conclusion of Proposition~\ref{meta_asymptotic_reg1} would hold with a rate of asymptotic regularity $\psi$,
	\[
	\forall \eps >0\ \forall k \geq \psi(\eps) \left(\|x_k-x_{k+1}\|\leq \eps\right),
	\]
	then the same argument would entail a rate satisfying
	\[
	\forall \eps >0\ \forall k \geq \widetilde{\psi}(\eps) \left(\bigwedge_{j=1}^{m} \|x_k-P_j(x_{k})\|\leq \eps\right),
	\]
	with $\widetilde{\psi}(\eps):=\psi(\eps/(m-1))$.
\end{remark}

\subsection{Metastability}

We show the quantitative version of Theorem~\ref{main} regarding the strong convergence of Dykstra's algorithm. Moreover, the proof of this finitary version bypasses all of the compactness principles used in the original proof. We start with an easy remark regarding some of the data obtained so far.
\begin{remark}\label{monotonicity}
	The function $\alpha$ (Proposition~\ref{meta_asymptotic_reg2}) is monotone in $\eps$,
	\[
	\eps\leq \eps' \to \alpha(b,m,\eps,f)\geq \alpha(b,m,\eps',f).
	\]
	The function $\Phi$ (Proposition~\ref{lim-inf-rate}) is monotone in $N$,
	\[
	N\leq N' \to \Phi(b,m,\eps,N)\leq \Phi(b,m,\eps,N').
	\]
\end{remark}

We have the following result which plays a central role in bypassing the compactness principles used in the original argument.
\begin{proposition}\label{prop-bypass}
	Let $\eps \in (0,1]$ and a function $\Delta:\N\to (0,\infty)$ be given. Then,
	\begin{equation*}
		\begin{split}
			&\exists n\leq \gamma(b,m,\eps,\Delta)\ \exists x\in \overline{B}_b(p)\\
			&\left( \bigwedge_{j=1}^m\|x-P_j(x)\|\leq \Delta(n) \land \|x-x_n\|\leq \eps \land \sum_{k=n-m+1}^{n}\langle x_k-x_n, q_k\rangle\leq \eps \right),
		\end{split}
	\end{equation*}
	where $\gamma(b,m,\eps, \Delta):=\overline{\alpha}(\overline{\beta}) + \Phi_{\eps}\big(\overline{\alpha}(\overline{\beta})\big)$ with
	\[
	\begin{gathered}
	\overline{\beta}:=\beta\left(b,\frac{\eps^2}{2}, \delta\right),\\
	\delta(\eta):=\min\left\{ \frac{\eps^2}{8b\big(\overline{\alpha}(\eta) + \Phi_{\eps}\big(\overline{\alpha}(\eta)\big)\big)}, \widetilde{\Delta}\Big(\overline{\alpha}(\eta) + \Phi_{\eps}\big(\overline{\alpha}(\eta)\big)\Big) \right\},\ \text{for all}\ \eta>0,\\
	\overline{\alpha}(\eta):=\alpha(b,m,\eta, \Phi_{\eps}),\ \text{for all}\ \eta>0,\\
	\Phi_{\eps}\big(N\big):=\Phi\!\left(b,m,\frac{\eps^2}{4}, N\right),\ \text{for all}\ N\in\N,\\
	\widetilde{\Delta}(k):=\min\{\Delta(k') : k'\leq k\},\ \text{for all}\ k\in \N,\\
	\alpha, \beta, \Phi\ \text{are as in Propositions~\ref{meta_asymptotic_reg2}, \ref{quant-projection} and \ref{lim-inf-rate}, respectively}.	
	\end{gathered}
	\]
\end{proposition}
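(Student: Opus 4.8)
The plan is to replace the weak-sequential-compactness step of the Boyle-Dykstra argument (where one extracts a weak cluster point of $(x_n)$ and applies Kolmogorov's criterion, Proposition~\ref{Kolmogorov}) by a single invocation of its quantitative analogue, Proposition~\ref{quant-projection}, applied at the base point $u:=x_0$. That proposition manufactures, for a modulus $\delta$ of our choosing, an error level $\eta\ge\overline{\beta}=\beta(b,\eps^2/2,\delta)$ together with an $\eta$-approximate common fixed point $x\in\overline{B}_b(p)$ enjoying the ``projection'' inequality $\langle x_0-x,y-x\rangle\le\eps^2/2$ against every $\eta$-approximate common fixed point $y\in\overline{B}_b(p)$. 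The remaining task is then to produce an index $n$ at which $x_n$ is simultaneously an $\eta$-approximate common fixed point and makes $\sum_{k=n-m+1}^n|\langle x_k-x_n,q_k\rangle|$ small, and to check that $n$ stays below $\gamma$.

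For that second part I would chain the two metastable facts already available. First, feed $\eta$ as the error and $\Phi_\eps$ as the counterfunction into Proposition~\ref{meta_asymptotic_reg2}, obtaining $n_0\le\overline{\alpha}(\eta)$ with $\bigwedge_{j=1}^m\|x_k-P_j(x_k)\|\le\eta$ for all $k\in[n_0;n_0+\Phi_\eps(n_0)]$. Then apply Proposition~\ref{lim-inf-rate} with error $\eps^2/4$ and starting point $n_0$ to get $n\in[n_0;n_0+\Phi_\eps(n_0)]$ with $\sum_{k=n-m+1}^n|\langle x_k-x_n,q_k\rangle|\le\eps^2/4$. Since $n$ lies in the asymptotic-regularity window, $x_n$ (which is in $\overline{B}_b(p)$) is an $\eta$-approximate common fixed point, so Proposition~\ref{quant-projection} yields $\langle x_0-x,x_n-x\rangle\le\eps^2/2$; and $n\le n_0+\Phi_\eps(n_0)\le\overline{\alpha}(\eta)+\Phi_\eps(\overline{\alpha}(\eta))$, which by monotonicity of $\alpha$ in its error argument and of $\Phi$ in $N$ (Remark~\ref{monotonicity}), together with $\eta\ge\overline{\beta}$, is $\le\overline{\alpha}(\overline{\beta})+\Phi_\eps(\overline{\alpha}(\overline{\beta}))=\gamma(b,m,\eps,\Delta)$.

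It then remains to verify the three conjuncts for this pair $(n,x)$. The first, $\bigwedge_j\|x-P_j(x)\|\le\Delta(n)$, follows because $\bigwedge_j\|x-P_j(x)\|\le\delta(\eta)\le\widetilde{\Delta}(\overline{\alpha}(\eta)+\Phi_\eps(\overline{\alpha}(\eta)))$ while $n\le\overline{\alpha}(\eta)+\Phi_\eps(\overline{\alpha}(\eta))$, so $\widetilde{\Delta}$ at that argument is $\le\Delta(n)$. The third, $\sum_{k=n-m+1}^n\langle x_k-x_n,q_k\rangle\le\eps$, is immediate from $\eps^2/4\le\eps$. For $\|x-x_n\|\le\eps$ I would expand $\|x_n-x\|^2=\langle x_n-x,x_n-x_0\rangle+\langle x_n-x,x_0-x\rangle$: the last term is $\le\eps^2/2$ by the Kolmogorov inequality; for the first, Lemma~\ref{easy-facts}.\ref{eq2} turns it into $\sum_{k=n-m+1}^n\langle x-x_n,q_k\rangle=\sum_{k=n-m+1}^n\langle x-x_k,q_k\rangle+\sum_{k=n-m+1}^n\langle x_k-x_n,q_k\rangle$; the second summand is $\le\eps^2/4$, and each term of the first is $\le\|x-P_{j_k}(x)\|\,\|q_k\|\le\delta(\eta)\|q_k\|$ by Lemma~\ref{easy-facts}.\ref{eq3}, so using Lemma~\ref{easy-facts2} and Cauchy-Schwarz ($\sum_{k=0}^{n-1}\|x_k-x_{k+1}\|\le b\sqrt{n}$ since $\sum_{k=0}^{n-1}\|x_k-x_{k+1}\|^2\le b^2$) that summand is $\le\delta(\eta)\,b\sqrt{n}\le\eps^2/8$ by the first clause in the definition of $\delta$ and $n\le\overline{\alpha}(\eta)+\Phi_\eps(\overline{\alpha}(\eta))$. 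Adding up gives $\|x_n-x\|^2\le\eps^2/8+\eps^2/4+\eps^2/2<\eps^2$.

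The main obstacle is organizational rather than conceptual: the modulus $\delta$ must be small relative to $b$ times an a priori bound on the index $n$ the construction will return, but that bound, $\overline{\alpha}(\eta)+\Phi_\eps(\overline{\alpha}(\eta))$, itself depends on the error level $\eta$ that Proposition~\ref{quant-projection} hands back — which is exactly why $\delta$ is defined through $\overline{\alpha}(\eta)+\Phi_\eps(\overline{\alpha}(\eta))$. One must confirm that this self-reference is only apparent: $\overline{\alpha}$ and $\Phi_\eps$ are built from $\alpha$, $\Phi$ and $\eps$ alone and never from $\delta$, so $\delta$, then $\overline{\beta}$, then $\gamma$ are each well defined in turn. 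Beyond that, the work is just the careful propagation of the monotonicity of $\alpha$ and $\Phi$ through the nested definitions.
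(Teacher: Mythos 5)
Your proposal is correct and follows essentially the same route as the paper's proof: apply Proposition~\ref{quant-projection} at $u=x_0$ to replace the weak cluster point, chain Propositions~\ref{meta_asymptotic_reg2} and \ref{lim-inf-rate} with counterfunction $\Phi_\eps$ to locate a suitable index, and then verify the three conjuncts by the same algebraic decomposition of $\|x-x_n\|^2$. The only deviation is cosmetic: you bound $\sum_{k=0}^{n-1}\|x_k-x_{k+1}\|$ by $b\sqrt{n}$ via Cauchy--Schwarz, where the paper uses the cruder $2bn$, giving $\eps^2/8$ instead of $\eps^2/4$ for that term; either suffices.
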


\begin{proof}
	By Proposition~\ref{quant-projection} with $u=x_0$, there are $\eta_0\geq \overline{\beta}$ and $x\in \overline{B}_b(p)$ such that
	\begin{equation}\label{eq12}
		\bigwedge_{j=1}^m\|x-P_j(x)\|\leq \delta(\eta_0)
	\end{equation}
	and
	\begin{equation}\label{eq13}
	\forall y\in \overline{B}_b(p) \left( \bigwedge_{j=1}^m\|y-P_j(y)\|\leq \eta_0 \to \langle x_0-x, y-x\rangle \leq \frac{\eps^2}{2} \right).
	\end{equation}
	Considering Proposition~\ref{meta_asymptotic_reg2} with $\eps=\eta_0$ and $f=\Phi_{\eps}$, we obtain
	\[
	\exists N_0\leq \overline{\alpha}(\eta_0)\ \forall i \in [N_0; N_0+\Phi_{\eps}\big(N_0\big)] \left( \bigwedge_{j=1}^m \|x_i-P_j(x_i)\|\leq \eta_0 \right).
	\]
	Since $(x_n)\subseteq \overline{B}_b(p)$, by \eqref{eq13} we have $\forall i\in [N_0;N_0+\Phi_{\eps}(N_0)] \left( \langle x_0-x, x_i-x\rangle \leq \eps^2/2 \right)$.
	
	On the other hand, from Proposition~\ref{lim-inf-rate} (with $\eps=\frac{\eps^2}{4}$ and $N=N_0$) and the definition of the function $\Phi_{\eps}$, there exists $n_0\in [N_0; N_0+\Phi_{\eps}(N_0)]$ such that
	\[
	\sum_{k=n_0-m+1}^{n_0} \langle x_k-x_{n_0},q_k\rangle \leq \frac{\eps^2}{4}.
	\]
	At this point, we remark that $n_0\leq \gamma(b,m,\eps, \Delta)$. Indeed, by Remark~\ref{monotonicity} and the fact that $\eta_0\geq \overline{\beta}$,
	\begin{align*}
	n_0&\leq N_0+\Phi_{\eps}(N_0)\leq \overline{\alpha}(\eta_0)+\Phi_{\eps}\big(\overline{\alpha}(\eta_0)\big)\leq \overline{\alpha}(\overline{\beta})+\Phi_{\eps}\big(\overline{\alpha}(\overline{\beta})\big)=\gamma(b,m,\eps, \Delta).
	\end{align*}
	The definition of the function $\delta$ and the monotonicity of $\widetilde{\Delta}$, then entail
	\[
	\delta(\eta_0)\leq \widetilde{\Delta}\!\left(\overline{\alpha}(\eta_0) + \Phi_{\eps}\left(\overline{\alpha}(\eta_0)\right)\right)\leq \widetilde{\Delta}\!\left(\overline{\alpha}(\beta) + \Phi_{\eps}\left(\overline{\alpha}(\beta)\right)\right)\leq \Delta(n_0).
	\]
	Hence, the first and the last term of the conjunction in the result hold true \emph{a fortiori}, and it remains to verify that $\|x-x_{n_0}\|\leq \eps$. Note that the definition of $\delta$ also entails
	\[
	\delta(\eta_0)\leq \frac{\eps^2}{8b(\overline{\alpha}(\eta_0)+\Phi_{\eps}(\overline{\alpha}(\eta_0))}\leq \frac{\eps^2}{8b(N_0+\Phi_{\eps}(N_0))}\leq \frac{\eps^2}{8bn_0}.
	\]
	Thus,
	\begin{align*}
		\|x-x_{n_0}\|^2 &=\langle x-x_{n_0}, x-x_0\rangle + \langle x-x_{n_0}, x_0-x_{n_0}\rangle\\
		&\leq \frac{\eps^2}{2} + \langle x-x_{n_0}, x_0-x_{n_0}\rangle\\
		&= \frac{\eps^2}{2} + \sum_{k={n_0}-m+1}^{n_0} \langle x-x_{n_0}, q_k\rangle\qquad \text{by Lemma~\ref{easy-facts}.\ref{eq2}}\\
		&= \frac{\eps^2}{2} + \sum_{k={n_0}-m+1}^{n_0} \langle x_k-x_{n_0}, q_k\rangle + \sum_{k={n_0}-m+1}^{n_0} \langle x-x_k, q_k\rangle\\
		&\leq \frac{\eps^2}{2} + \frac{\eps^2}{4} + \sum_{k={n_0}-m+1}^{n_0} \langle \underbrace{P_k(x)}_{\in C_k}-x_k, q_k\rangle + \sum_{k={n_0}-m+1}^{n_0} \langle x-P_k(x), q_k\rangle\\
		&\leq \frac{3\eps^2}{4} + \sum_{k={n_0}-m+1}^{n_0} \langle x-P_k(x), q_k\rangle\qquad \text{by Lemma~\ref{easy-facts}.\ref{eq3}}\\
		&\leq \frac{3\eps^2}{4} + \sum_{k={n_0}-m+1}^{n_0} \|x-P_k(x)\|\cdot \|q_k\|\\
		&\leq \frac{3\eps^2}{4} + \delta(\eta_0)\sum_{k=0}^{{n_0}-1} \|x_k-x_{k+1}\|\qquad \text{by Lemma~\ref{easy-facts2}}\\
		&\leq \frac{3\eps^2}{4} + \delta(\eta_0)\cdot {n_0}\cdot 2b\leq \frac{3\eps^2}{4} + \frac{\eps^2}{8b{n_0}}\cdot{n_0}\cdot 2b=\eps^2,
	\end{align*}
	which gives $\|x-x_{n_0}\|\leq \eps$ and concludes the proof.	
\end{proof}

We are now ready to prove our central result which corresponds to a quantitative version of Theorem~\ref{main}.
\begin{theorem}\label{main_quant}
	Let $C_1, \cdots, C_m$ be $m\geq 2$ convex subsets of a Hilbert space $X$ such that $\bigcap_{j=1}^m C_j\neq \emptyset$. Let $x_0\in X$ and $b\in\N^*$ be given such that $b\geq \|x_0-p\|$ for some $p\in \bigcap_{j=1}^m C_j$. Then, the sequence $(x_n)$ generated by \eqref{dykstra-iter} is a Cauchy sequence and for all $\eps\in(0,1]$ and $f:\N\to\N$,
	\[
	\exists n\leq \Omega(m,b,\eps, f)\ \forall i,j\in [n;n+f(n)] \left( \|x_i-x_j\|\leq \eps \right),
	\]
	where $\Omega(b,m,\eps,f):=\gamma(b,m,\tilde{\eps}, \Delta_{\eps,f})$ with
	\[
	\begin{gathered}
		\tilde{\eps}:=\frac{\eps^2}{96b},\\
		\Delta_{\eps,f}(k):=\frac{\eps^2}{48b\cdot\max\{k+f(k), 1\}},\ \text{for all}\ k\in\N,\\
		\text{and}\ \gamma\ \text{is as defined in Proposition~\ref{prop-bypass}}.
	\end{gathered}
	\]
\end{theorem}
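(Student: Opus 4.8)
The plan is to derive metastability of $(x_n)$ directly from Proposition~\ref{prop-bypass}, exploiting the main inequality \eqref{main_inequality} to propagate the control obtained at a single index $n_0$ forward across the whole window $[n_0; n_0+f(n_0)]$. The point $x\in\overline{B}_b(p)$ produced by Proposition~\ref{prop-bypass} is an \emph{approximate} common fixed point (it is $\Delta(n)$-close to each $P_j(x)$) that is also $\eps$-close to $x_{n_0}$, and comes equipped with the smallness of $\sum_{k=n_0-m+1}^{n_0}\langle x_k-x_{n_0},q_k\rangle$. The role of $x$ is to serve as a stand-in for the true limit $P_C(x_0)$ in the classical proof: since the classical argument shows $\|x_i-P_C(x_0)\|$ is eventually monotone decreasing, here we will show $\|x_i-x\|^2$ cannot increase by more than $O(\eps^2)$ as $i$ ranges over the window, which together with $\|x_{n_0}-x\|\le\eps$ forces all the $x_i$ in the window to be within $O(\eps)$ of each other.

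Concretely, first I would apply Proposition~\ref{prop-bypass} with $\eps$ replaced by $\tilde\eps=\eps^2/(96b)$ and $\Delta=\Delta_{\eps,f}$, obtaining $n_0\le\gamma(b,m,\tilde\eps,\Delta_{\eps,f})=\Omega(b,m,\eps,f)$ together with $x\in\overline B_b(p)$ satisfying $\bigwedge_j\|x-P_j(x)\|\le\Delta_{\eps,f}(n_0)$, $\|x-x_{n_0}\|\le\tilde\eps$, and $\sum_{k=n_0-m+1}^{n_0}\langle x_k-x_{n_0},q_k\rangle\le\tilde\eps$. Next, for any $i\in[n_0;n_0+f(n_0)]$ I would invoke \eqref{main_inequality} with $z=x$, $n=n_0$, to get
\[
\|x_i-x\|^2\le\|x_{n_0}-x\|^2+2\sum_{k=n_0-m+1}^{n_0}\langle x_k-x,q_k\rangle-2\sum_{k=i-m+1}^{i}\langle x_k-x,q_k\rangle.
\]
Each of the two $q$-sums must be estimated. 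For the $n_0$-sum, split $\langle x_k-x,q_k\rangle=\langle x_k-x_{n_0},q_k\rangle+\langle x_{n_0}-x,q_k\rangle$; the first piece sums to $\le\tilde\eps$, and the second is bounded by $\|x_{n_0}-x\|\sum_k\|q_k\|\le\tilde\eps\cdot 2b n_0$ using Lemma~\ref{easy-facts2} and $\sum_{k=0}^{n_0-1}\|x_k-x_{k+1}\|\le n_0\cdot 2b$ (from $\|x_k-x_{k+1}\|\le 2b$). For the $i$-sum, which enters with a minus sign and so must be bounded \emph{below}, write $\langle x_k-x,q_k\rangle=\langle x_k-P_k(x),q_k\rangle+\langle P_k(x)-x,q_k\rangle\ge 0-\|x-P_k(x)\|\,\|q_k\|$ using Lemma~\ref{easy-facts}\ref{eq3} (since $P_k(x)\in C_k$), and then $\|x-P_k(x)\|\le\Delta_{\eps,f}(n_0)$ together with Lemma~\ref{easy-facts2} again and $i\le n_0+f(n_0)$ bounds $\sum_{k=i-m+1}^{i}\|q_k\|\le 2b\,i\le 2b(n_0+f(n_0))$. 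The denominators in the definitions of $\tilde\eps$ and $\Delta_{\eps,f}$ are chosen exactly so that each of these error terms is at most $\eps^2/8$-ish, giving $\|x_i-x\|^2\le\|x_{n_0}-x\|^2+\eps^2/2\le\tilde\eps^2+\eps^2/2\le\eps^2/2+\eps^2/4$ (say), hence $\|x_i-x\|\le\eps/\sqrt2$ for every $i$ in the window; the triangle inequality then yields $\|x_i-x_j\|\le\eps$ for all $i,j\in[n_0;n_0+f(n_0)]$.

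The main obstacle, and the only genuinely delicate point, is bookkeeping the constants: one must check that with the specific choices $\tilde\eps=\eps^2/(96b)$ and $\Delta_{\eps,f}(k)=\eps^2/(48b\max\{k+f(k),1\})$ all three error contributions — the $\tilde\eps$ from the $\liminf$-term, the $2bn_0\tilde\eps$ from the $x_{n_0}$-vs-$x$ discrepancy, and the $2b(n_0+f(n_0))\Delta_{\eps,f}(n_0)$ from the almost-fixed-point defect — sum (after the factor $2$ in \eqref{main_inequality}) to at most $\eps^2/2$, so that combined with $\|x_{n_0}-x\|^2\le\tilde\eps^2\le\eps^2/2$ one still lands below $\eps^2$; note $n_0+f(n_0)\ge n_0$ so $\Delta_{\eps,f}(n_0)\cdot 2b(n_0+f(n_0))\le\eps^2/24$ and the monotone bound $\max\{n_0+f(n_0),1\}\ge n_0$ makes $2bn_0\tilde\eps\le\eps^2/48$, etc. Everything else is a direct assembly of Lemmas~\ref{easy-facts}, \ref{easy-facts2}, the bound $\|x_k-x_{k+1}\|\le 2b$, and the main inequality \eqref{main_inequality}; the Cauchy property itself follows because $\Omega$ does not depend on any oscillatory data, only on $b,m$ and the window function, so Proposition~\ref{rate-meta} (in its converse direction, via the noneffective equivalence of metastability and the Cauchy property) delivers it, or one simply notes the classical Theorem~\ref{main} already gives convergence and the bound $\Omega$ is the quantitative refinement.
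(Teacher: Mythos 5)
Your overall plan matches the paper's: invoke Proposition~\ref{prop-bypass} with $\tilde\eps$ and $\Delta_{\eps,f}$ to produce $n_0\le\Omega$ and the approximate common fixed point $x$, then feed $z=x$, $n=n_0$ into \eqref{main_inequality}, split each of the two $q$-sums, and bound the pieces so that $\|x_i-x\|\le\eps/2$ across the window. The treatment of the $i$-sum (split off $\langle P_k(x)-x_k,q_k\rangle\le 0$ via Lemma~\ref{easy-facts}.\ref{eq3}, then control $\sum\|q_k\|$ by $2bi\le 2b(n_0+f(n_0))$ and absorb via $\Delta_{\eps,f}$) is precisely the paper's $t_2$ estimate.

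However, there is a concrete error in your estimate of the cross-term in the $n_0$-sum. You bound
\[
\sum_{k=n_0-m+1}^{n_0}\langle x_{n_0}-x,q_k\rangle \;\le\; \|x_{n_0}-x\|\cdot\!\!\sum_{k=n_0-m+1}^{n_0}\!\|q_k\| \;\le\; \tilde\eps\cdot 2bn_0,
\]
using Lemma~\ref{easy-facts2} and $\|x_k-x_{k+1}\|\le 2b$, and then assert that ``the monotone bound $\max\{n_0+f(n_0),1\}\ge n_0$ makes $2bn_0\tilde\eps\le\eps^2/48$''. This inequality is false: with $\tilde\eps=\eps^2/(96b)$ one has $2bn_0\tilde\eps=n_0\eps^2/48$, which exceeds $\eps^2/48$ as soon as $n_0>1$; and, crucially, $\tilde\eps$ (unlike $\Delta_{\eps,f}(n_0)$) carries no $n_0$-dependent denominator to absorb this growth. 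The missing ingredient is to not expand $\sum\|q_k\|$ here at all, but instead use the telescoping identity of Lemma~\ref{easy-facts}.\ref{eq2}, namely $\sum_{k=n_0-m+1}^{n_0}q_k=x_0-x_{n_0}$, so that
\[
\sum_{k=n_0-m+1}^{n_0}\langle x_{n_0}-x,q_k\rangle=\langle x_{n_0}-x,\,x_0-x_{n_0}\rangle\le\|x_{n_0}-x\|\cdot\|x_0-x_{n_0}\|\le 2b\,\tilde\eps=\frac{\eps^2}{48},
\]
with no factor of $n_0$ whatsoever (here $\|x_0-x_{n_0}\|\le 2b$ since both lie in $\overline{B}_b(p)$). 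With this correction the three error contributions assemble as the paper intends to give $\|x_i-x\|^2\le\eps^2/12+\eps^2/12+\eps^2/12=\eps^2/4$, and the rest of your argument (the $t_2$ bound, $\|x_{n_0}-x\|^2\le\tilde\eps^2\le\eps^2/12$, and the final triangle inequality) is sound.
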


\begin{proof}
	Let $\eps \in (0,1]$ and a function $f:\N\to\N$ be given. By Proposition~\ref{prop-bypass}, there is $n_0\leq \Omega(b,m,\eps,f)$ and $x\in \overline{B}_b(p)$ such that
	\begin{enumerate}
		\item[$(a)$] $\bigwedge_{j=1}^m \|x-P_j(x)\|\leq \Delta_{\eps,f}(n_0)$,
		\item[$(b)$] $\|x_{n_0}-x\|\leq \tilde{\eps}\leq \frac{\eps}{\sqrt{12}}$,
		\item[$(c)$] $\sum_{k=n_0-m+1}^{n_0} \langle x_k-x_{n_0}, q_k\rangle \leq \tilde{\eps} \leq \frac{\eps^2}{48}$.
	\end{enumerate}
	In order to verify that the result holds for such $n_0$, we consider $i\in [n_0;n_0+f(n_0)]$. We assume that $f(n_0)\geq 1$, and thus $\max\{n_0+f(n_0), 1\}=n_0+f(n_0)$, otherwise the result trivially holds. Since $i\geq n_0$, by \eqref{main_inequality} and using $(b)$, we have
	\begin{align*}
		\|x_i-x\|^2&\leq \|x_{n_0}-x\|^2 +2\sum_{k={n_0}-m+1}^{{n_0}}\langle x_k-x, q_k\rangle - 2\sum_{k=i-m+1}^{i}\langle x_k-x, q_k\rangle\\
		&\leq \frac{\eps^2}{12} + 2\underbrace{\sum_{k={n_0}-m+1}^{n_0}\langle x_k-x, q_k\rangle}_{t_1} + 2\underbrace{\sum_{k=i-m+1}^{i}\langle x-x_k, q_k\rangle}_{t_2}.
	\end{align*}
	Using $(b)$, $(c)$ and Lemma~\ref{easy-facts}.\ref{eq2}, we get
	\begin{align*}
		t_1&=\sum_{k={n_0}-m+1}^{n_0}\langle x_k-x_{n_0}, q_k\rangle + \sum_{k={n_0}-m+1}^{n_0}\langle x_{n_0}-x, q_k\rangle\\
		&\leq \frac{\eps^2}{48} + \langle x_{n_0}-x, \sum_{k={n_0}-m+1}^{n_0}q_k\rangle = \frac{\eps^2}{48} + \langle x_{n_0}-x, x_0-x_{n_0}\rangle\\
		&\leq \frac{\eps^2}{48} + 2b\cdot\|x_{n_0}-x\|\leq \frac{\eps^2}{48} + 2b\cdot\tilde{\eps}=\frac{\eps^2}{48} + 2b\cdot\frac{\eps^2}{96b}=\frac{\eps^2}{24}.
	\end{align*}
	and, using $(a)$ and Kolmogorov's criterium,
	\begin{align*}
		t_2&=\sum_{k=i-m+1}^{i}\langle x-P_k(x), q_k\rangle + \underbrace{\sum_{k=i-m+1}^{i}\langle \underbrace{P_k(x)}_{\in C_k}-x_k, q_k\rangle}_{\leq\, 0}\\
		&\leq \sum_{k=i-m+1}^{i}\langle x-P_k(x), q_k\rangle \leq \sum_{k=i-m+1}^{i} \|x-P_k(x)\|\|q_k\|\\
		&\leq \Delta_{\eps, f}({n_0})\sum_{k=i-m+1}^{i}\|q_k\| \leq \Delta_{\eps, f}({n_0})\cdot\sum_{k=0}^{i-1}\|x_k-x_{k+1}\|\quad \text{by Lemma~\ref{easy-facts2}}\\
		&\leq 2b\cdot i\cdot\Delta_{\eps, f}({n_0})=2b\cdot i\cdot\frac{\eps^2}{48b({n_0}+f({n_0}))}\leq \frac{\eps^2}{24},
	\end{align*}
	using in the last inequality the fact that $i\leq {n_0}+f({n_0})$. Overall, we conclude that
	\[
	\|x_i-x\|^2\leq \frac{\eps^2}{12} + \frac{\eps^2}{12} + \frac{\eps^2}{12}=\frac{\eps^2}{4},
	\]
	and thus $\|x_i-x\|\leq \eps/2$, which entails the result by triangle inequality.
\end{proof}

In contrast with the lack of a full rate of convergence, the reader should note the high uniformity of the rate of metastability obtained. Our function does not depend on specifics of the underlying space nor on any additional geometric properties of the convex sets. The rate only depends on the parameters $m\geq 2$ for the number of sets, and $b\in\N^*$ for a bound on the distance between the initial point and the feasibility set.
\begin{remark}\label{true-finitization}
	Theorem~\ref{main_quant} is a true finitization of Dykstra's convergence result in the sense that, besides only discussing properties for a finite number of terms, it implies back the original statement. Indeed, if the sets are closed, as the sequence $(x_n)$ satisfies the metastability property it is a Cauchy sequence, and by completeness it converges to some point of the space, say $z=\lim x_n$. By Proposition~\ref{meta_asymptotic_reg2} and continuity of the projection maps $P_j$, we conclude that $z$ must be a common fixed point for all the projection, i.e.\ $z\in\bigcap_{j=1}^m C_j$. It only remains to argue that the limit point is actually the feasible point closest to $x_0$. Let $C:=\bigcap_{j=1}^m C_j$ and write $P_C(x_0)$ for the projection of $x_0$ onto the intersection. Consider $\eps>0$ to be arbitrarily given and $N_0\in\N$ such that $\|x_n-z\|\leq \min\{\frac{\eps^2}{8b}, \frac{\eps}{2}\}$ for all $n\geq N_0$ (with $b$ as before). As per Proposition~\ref{lim-inf-rate}, we may consider some $n_0\geq N_0$ such that
	\[
	\sum_{k=n_0-m+1}^{n_0}\langle x_k-x_{n_0},q_k\rangle\leq \frac{\eps^2}{8}.
	\]
	Since $z\in C$, by Proposition~\ref{Kolmogorov}, we also have 
	\[
	\langle P_C(x_0)-x_{n_0}, P_C(x_0)-x_0\rangle\leq \langle z-x_{n_0}, P_C(x_0)-x_0\rangle\leq b\cdot\|z-x_{n_0}\|\leq \frac{\eps^2}{8}.
	\]
	It is now easy to see that
	\begin{align*}
		\|P_C(x_0)-x_{n_0}\|^2& \leq \frac{\eps^2}{8} + \langle P_C(x_0)-x_{n_0}, x_0-x_{n_0}\rangle\\
		&= \frac{\eps^2}{8} + \sum_{k={n_0}-m+1}^{n_0} \langle P_C(x_0)-x_{n_0}, q_k\rangle\qquad \text{by Lemma~\ref{easy-facts}.\ref{eq2}}\\
		&= \frac{\eps^2}{8} + \sum_{k={n_0}-m+1}^{n_0} \langle x_k-x_{n_0}, q_k\rangle + \underbrace{\sum_{k={n_0}-m+1}^{n_0} \langle P_C(x_0)-x_k, q_k\rangle}_{\leq 0,\ \text{by Lemma~\ref{easy-facts}.\ref{eq3}}}\\
		&\leq \frac{\eps^2}{8}+ \frac{\eps^2}{8}= \left(\frac{\eps}{2}\right)^2,
	\end{align*}
	which entails $\|P_C(x_0)-z\|\leq \eps$ and so, as $\eps$ is arbitrary, $z=P_C(x_0)$.
\end{remark}

\section{Rates of convergence and regularity}\label{simp}

In this section, we study the rate of convergence for Dykstra's algorithm under a regularity assumption on the structure of the convex sets. We remark that a regularity assumption on the convex sets $C_1, \cdots, C_m$ is known to allow for rates of convergence already for simpler iterative methods. As explained in \cite[pp~291--292]{KohlenbachLopes-AcedoNicolae(2019)}, a solution to the CFP can be obtained via a Mann-type iteration and, based on the work in \cite{KhanKohlenbach(2014)}, rates of convergence are available under a regularity condition. Moreover, in \cite[pp~288]{KohlenbachLopes-AcedoNicolae(2019)} the authors obtained rates of convergence for \eqref{MAP} even in a general nonlinear setting and an extremely fast rate was given in \cite[Corollary~4.17]{KohlenbachLopes-AcedoNicolae(2019)}. However, these studies were concerned with the interplay between regularity and iterative methods which are Fejér monotone with respect to the feasibility set. In this context, the study of Dykstra's algorithm is of particular interest as the iteration fails to be Fejér monotone and yet it was possible to obtain rates of convergence.

\subsection{The rate of convergence}

Denote $C:=\bigcap_{j=1}^m C_j$ and let $p$ be some point of $X$. We call a function $\mu:\N\times (0,\infty)\to (0, \infty)$ satisfying for all $\eps >0$ and $r\in\N$,
\begin{equation}\label{mod_regularity}\tag{$\star$}
	\forall x\in \overline{B}_r(p) \left( \bigwedge_{j=1}^m\! \|x-P_j(x)\|\leq \mu_r(\eps) \to \exists z\in C\, \|x-z\|\leq \eps \right)
\end{equation}
a \emph{modulus of regularity} for the sets $C_1, \cdots, C_m$ (centred at $p$). Thus, a modulus of regularity let us know how close to the individual sets (i.e.\ $\mu_r(\eps)$-almost $P_j$ fixed point) must a point be so that we are sure that it is close to the intersection set (i.e.\ an $\eps$-almost $P_C$ fixed point). We refer the reader to \cite{KohlenbachLopes-AcedoNicolae(2019)} where this notion was developed and shown to be an effective tool for a unified discussion of several concepts in convex optimization.

\begin{remark}
	Clearly the conclusion of \eqref{mod_regularity} is equivalent to $\|x-P_C(x)\|\leq \eps$, i.e.\ $\dist(x, C)\leq \eps$. Furthermore, the existence of a modulus of regularity centred at $p$ (say $\mu^p$), obviously entails the existence of a modulus of regularity centred at any other $q\in X$ (say $\mu^q$) -- it is easy to verify that for any $q\in X$, $\mu^q_r: \eps\mapsto\mu^p_{r+\lceil\|p-q\|\rceil}(\eps)$ works. The choice of the point $p$ is always clear by the context and so we just write $\mu$ instead of $\mu^p$.
\end{remark}

In the case where a modulus of regularity is available, we can actually give rates of convergence for Dykstra's iteration.
\begin{theorem}\label{rates}
	Consider $x_0\in X$ and a natural number $b\in\N^*$ such that $b\geq \|x_0-p\|$ for some $p\in C$. Let $\mu$ be a function satisfying \eqref{mod_regularity}. Then,
	\[
	\forall \eps >0\ \forall i,j\geq \Theta(b,m,\eps) \left( \|x_i-x_j\|\leq \eps \right),
	\]
	where $\Theta(b,m,\eps):=\alpha(b,m,\mu_b(\tilde{\eps}),\Phi_{\eps})+\Phi_{\eps}(\alpha(b,m,\mu_b(\tilde{\eps}),\Phi_{\eps}))$ with
	\[
	\begin{gathered}
		\tilde{\eps}:=\frac{\eps^2}{32b},\ \Phi_{\eps}(N):=\Phi\left(b,m,\frac{\eps^2}{16},N\right)\ \text{for all}\ N\in\N,\\
		\alpha,\Phi\ \text{are as in Propositions~\ref{meta_asymptotic_reg2} and \ref{lim-inf-rate}, respectively}.
	\end{gathered}
	\]
	In particular, $(x_n)$ converges with rate $\Theta$.
\end{theorem}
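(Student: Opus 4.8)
The plan is to mirror the structure of the proof of Theorem~\ref{main_quant}, but to replace the single "bypass" point $x$ produced by the quantitative projection lemma with a genuine common almost-fixed point obtained from the modulus of regularity, thereby upgrading metastability to an honest rate of convergence. First I would fix $\eps>0$ and set $\tilde\eps:=\eps^2/(32b)$ and $\Phi_\eps(N):=\Phi(b,m,\eps^2/16,N)$ as in the statement. Applying Proposition~\ref{meta_asymptotic_reg2} with accuracy $\mu_b(\tilde\eps)$ and counterfunction $\Phi_\eps$, I obtain some $N_0\leq\overline{N}:=\alpha(b,m,\mu_b(\tilde\eps),\Phi_\eps)$ with $\bigwedge_{j=1}^m\|x_i-P_j(x_i)\|\leq\mu_b(\tilde\eps)$ for all $i\in[N_0;N_0+\Phi_\eps(N_0)]$. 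Since $x_i\in\overline B_b(p)$, the modulus of regularity \eqref{mod_regularity} gives, for each such $i$, a point $z_i\in C$ with $\|x_i-z_i\|\leq\tilde\eps$. Next, by Proposition~\ref{lim-inf-rate} with $\eps^2/16$ and $N=N_0$ there is $n_0\in[N_0;N_0+\Phi_\eps(N_0)]$ with $\sum_{k=n_0-m+1}^{n_0}|\langle x_k-x_{n_0},q_k\rangle|\leq\eps^2/16$; note $n_0\leq\overline N+\Phi_\eps(\overline N)=\Theta(b,m,\eps)$ by the monotonicity of $\Phi$ in $N$ (Remark~\ref{monotonicity}). Write $z:=z_{n_0}\in C$, so $\|x_{n_0}-z\|\leq\tilde\eps$.

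The heart of the argument is then to bound $\|x_i-z\|$ for all $i\geq\Theta(b,m,\eps)$ — crucially for \emph{all} such $i$, not merely those in a metastable window, which is exactly what \eqref{main_inequality} with $n=n_0$ delivers since it does not restrict $i$. From $\|x_i-z\|^2\leq\|x_{n_0}-z\|^2+2\sum_{k=n_0-m+1}^{n_0}\langle x_k-z,q_k\rangle-2\sum_{k=i-m+1}^{i}\langle x_k-z,q_k\rangle$, I would estimate the two sums as in the proof of Theorem~\ref{main_quant}. For the first sum, splitting $\langle x_k-z,q_k\rangle=\langle x_k-x_{n_0},q_k\rangle+\langle x_{n_0}-z,q_k\rangle$ and using Lemma~\ref{easy-facts}\ref{eq2} to collapse $\sum_{k=n_0-m+1}^{n_0}q_k=x_0-x_{n_0}$, then bounding $\langle x_{n_0}-z,x_0-x_{n_0}\rangle\leq 2b\,\|x_{n_0}-z\|\leq 2b\tilde\eps$, keeps this sum under a constant multiple of $\eps^2$. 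For the last sum, $\langle x_k-z,q_k\rangle=\langle x_k-P_k(z),q_k\rangle+\langle P_k(z)-z,q_k\rangle$: but $z\in C\subseteq C_k$, so $P_k(z)=z$ and the whole inner product is $\langle x_k-z,q_k\rangle\geq 0$ by Kolmogorov (Lemma~\ref{easy-facts}\ref{eq3}), so $-2\sum_{k=i-m+1}^{i}\langle x_k-z,q_k\rangle\leq 0$ and this term simply drops. Collecting constants, one gets $\|x_i-z\|^2\leq\tilde\eps^2+2(\eps^2/16)+2\cdot 2b\tilde\eps\leq(\eps/2)^2$ after a routine check that $\tilde\eps=\eps^2/(32b)$ was chosen precisely so $4b\tilde\eps=\eps^2/8$ and the leftover $\tilde\eps^2$ is negligible; hence $\|x_i-z\|\leq\eps/2$ for every $i\geq\Theta(b,m,\eps)$, and the triangle inequality yields $\|x_i-x_j\|\leq\eps$ for all $i,j\geq\Theta(b,m,\eps)$, which is the claimed Cauchy rate and (by completeness, as in Remark~\ref{true-finitization}) a rate of convergence.

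I expect the main subtlety to be bookkeeping rather than conceptual: verifying that $n_0$ and the common reference point $z$ are pinned down once and for all \emph{below} $\Theta(b,m,\eps)$, so that the estimate on $\|x_i-z\|$ genuinely holds uniformly for all larger $i$ — this is where the proof departs from the metastable one, in which $n$ had to be chosen depending on the window $f$. The one place this could go wrong is if the point $z$ extracted from the modulus of regularity depended on $i$ in an uncontrolled way; the fix is to extract $z=z_{n_0}$ \emph{only} at the single index $n_0$ and then exploit that $z\in C$ makes the tail sum in \eqref{main_inequality} sign-definite regardless of $i$. Everything else — the constants $\eps^2/16$, $\eps^2/32b$, the factor $2b$ bounds on $\|x_0-x_{n_0}\|$ and $\|x_{n_0}-z\|$ — is the same kind of elementary estimation already carried out in the proof of Theorem~\ref{main_quant}, and I would not belabor it.
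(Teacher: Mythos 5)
Your proof follows the same route as the paper: asymptotic regularity at accuracy $\mu_b(\tilde\eps)$ on a window of length $\Phi_\eps$, the $\liminf$-rate to find $n_0$ in that window, and then \eqref{main_inequality} at $n=n_0$ to control $\|x_i-\cdot\|$ uniformly for all $i\geq n_0$. The one place you diverge is that you plug $z=z_0\in C$ (the regularity point near $x_{n_0}$) into \eqref{main_inequality} and bound $\|x_i-z_0\|$. That introduces a residual term $\|x_{n_0}-z_0\|^2\leq\tilde\eps^2$, and your final estimate is in fact $\|x_i-z_0\|^2\leq\tilde\eps^2+\eps^2/4$, strictly larger than $(\eps/2)^2$; with $\tilde\eps=\eps^2/(32b)$ the leftover is tiny but nonzero, so the stated constant $\Theta$ as defined is not quite reproduced (the triangle inequality would only give $\|x_i-x_j\|\leq\eps\sqrt{1+4\tilde\eps^2/\eps^2}>\eps$). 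The paper avoids this neatly: it applies \eqref{main_inequality} with $z=x_{n_0}$, so the $\|x_{n_0}-z\|^2$ term vanishes, and it only brings in $z_0\in C$ when showing that the tail sum is not too negative, splitting $\langle x_k-x_{n_0},q_k\rangle=\langle x_k-z_0,q_k\rangle+\langle z_0-x_{n_0},q_k\rangle$ with the first piece $\geq0$ by Lemma~\ref{easy-facts}.\ref{eq3} and the second collapsing via Lemma~\ref{easy-facts}.\ref{eq2} to $\langle z_0-x_{n_0},x_0-x_i\rangle\geq-2b\tilde\eps=-\eps^2/16$. This gives $\|x_i-x_{n_0}\|^2\leq\eps^2/8+\eps^2/8=\eps^2/4$ exactly. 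Your argument is salvageable by shrinking $\tilde\eps$ slightly or by switching to the paper's choice $z=x_{n_0}$, but as written the last inequality does not hold with the announced constants.
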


\begin{proof}
	By Proposition~\ref{meta_asymptotic_reg2}, there is $N_0\leq \alpha(b,m,\mu_b(\tilde{\eps}),\Phi_{\eps})$ such that
	\[
	\forall n\in [N_0;N_0+\Phi_{\eps}(N_0)] \left(\bigwedge_{j=1}^m \|x_n-P_j(x_n)\|\leq \mu_b(\tilde{\eps})\right).
	\]
	Since $(x_n)\subseteq \overline{B}_b(p)$, by the assumption \eqref{mod_regularity} on $\mu$ it follows that
	\begin{equation}\label{reg-applied}
	\forall n\in [N_0;N_0+\Phi_{\eps}(N_0)]\ \exists z\in C\cap \overline{B}_b(p) \left( \|x_n-z\|\leq \frac{\eps^2}{32b}\right).
	\end{equation}
	Applying Proposition~\ref{lim-inf-rate} with $\eps=\frac{\eps^2}{16}$ and $N=N_0$, we have $n_0\in [N_0; N_0+\Phi_{\eps}(N_0)]$ such that
	\[
	\sum_{k=n_0-m+1}^{n_0} \langle x_k-x_{n_0}, q_k\rangle \leq \frac{\eps^2}{16}.
	\]
	By \eqref{reg-applied}, let $z_0\in C$ be such that $\|z_0-x_{n_0}\|\leq \eps^2/32b$. Thus, for any $i\geq n_0$
	\begin{align*}
		\sum_{k=i-m+1}^{i}\langle x_k-x_{n_0},q_k\rangle &= \underbrace{\sum_{k=i-m+1}^{i} \langle x_k-z_0, q_k\rangle}_{\geq 0,\ \text{by Lemma~\ref{easy-facts}.\ref{eq3}}} + \sum_{k=i-m+1}^{i} \langle z_0-x_{n_0}, q_k\rangle\\
		&\geq \langle z_0-x_{n_0}, \sum_{k=i-m+1}^{i} q_k\rangle\\
		&= \langle z_0-x_{n_0}, x_0-x_i\rangle\qquad \text{by Lemma~\ref{easy-facts}.\ref{eq2}}\\
		&\geq -\|z_0-x_{n_0}\|\cdot\|x_0-x_i\| \geq -\frac{2b\eps^2}{32b}=-\frac{\eps^2}{16}.
	\end{align*}
	Now by \eqref{main_inequality} with $n=n_0$ and $z=x_{n_0}$,
	\[
	\|x_i-x_{n_0}\|^2\leq 2\!\sum_{k=n_0-m+1}^{n_0}\langle x_k-x_{n_0}, q_k\rangle -2\! \sum_{k=i-m+1}^{i}\langle x_k-x_{n_0}, q_k\rangle\leq \frac{\eps^2}{4},
	\]
	which entails that $\|x_i-x_{n_0}\|\leq \eps/2$ and the result follows by triangle inequality.
\end{proof}

In particular, $\Theta$ is also a rate of asymptotic regularity for the sequence $(x_n)$ and, by Remark~\ref{remark_ass-reg}, the function $\Theta':\eps\mapsto \Theta(b,m,\frac{\eps}{m-1})$ is a rate of asymptotic regularity with respect to the individual projections.

We now recall the following class of convex sets in $\R^n$.
\begin{definition}
	A set $C\subseteq \R^n$ is called a basic semi-algebraic convex set in $\R^n$ if there exist $\gamma\geq 1$ convex polynomial functions $g_i$ on $\R^n$ such that
	\[
	C:=\{ x\in \N^n : g_i(x)\leq 0, i\in[1; \gamma] \}.
	\]
\end{definition}

We remark that the class of basic semi-algebraic convex sets is a broad class of convex sets which includes in particular the polyhedral case and the class of convex sets described by convex quadratic functions. It was observed in \cite{KohlenbachLopes-AcedoNicolae(2019)} that the study of Hölderian regularity in \cite{BorweinLiYao(2014)} entails the existence of a modulus of regularity for basic semi-algebraic convex sets with respect to compact sets. As such, we immediately have the following example of application of Theorem~\ref{rates}.

\begin{example}
	Let $C_1, \cdots, C_m\subseteq \R^n$ be basic semi-algebraic sets described by convex polynomials $g_{i,j}$ with degree at most $d\in\N$, and such that $\bigcap_{j=1}^m C_j\neq \emptyset$. Consider some $p\in \R^n$. Then, for any $r\in\N$ there exists $c>0$ such that
	\[
	\mu_r(\eps):=\frac{\left(\eps/c\right)^{\sigma}}{m},\ \text{with}\ \sigma:=\min\left\{ \frac{(2d-1)^n+1}{2}, B(n-1)d^n \right\},
	\]
	where $B(n):=\Big(\overset{n}{\lfloor n/2 \rfloor}\Big)$ is the central binomial coefficient with respect to $n$, is a modulus of regularity for $C_1, \cdots, C_m$ centred at $p$\footnote{Which corresponds to a modulus of regularity for $C_1, \cdots, C_m$ with respect to the compact set $\overline{B}_r(p)\subseteq \R^n$, in the terminology used in \cite{KohlenbachLopes-AcedoNicolae(2019)}.}. Hence, by Theorem~\ref{rates} one has a uniform rate of convergence for Dykstra's cyclic projections algorithm for basic semi-algebraic convex sets in $\R^n$.
\end{example}

\subsection{Regularity}

We now argue that a modulus of regularity is a necessary condition for the existence of uniform convergence rates.

\begin{proposition}\label{kappa}
	Let $x_0\in X$ and $b\in\N^*$ be such that $b\geq \|x_0-p\|$ for some $p\in\bigcap_{j=1}C_j$. Consider $(x_n)$ the iteration generated by \eqref{dykstra-iter} with initial point $x_0$. Then,
	\[
	\forall \eps>0\ \forall n\in\N \left( \bigwedge_{j=1}^m\|x_0-P_j(x_0)\|\leq \frac{\eps^2}{4bn} \to \|x_n-x_0\|\leq \eps \right).
	\]
\end{proposition}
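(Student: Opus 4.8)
The plan is to apply the master inequality \eqref{main_inequality} from Lemma~\ref{lemma_main_equality} with the special choices $z := x_0$ and lower index $0$. Since the auxiliary terms satisfy $q_{-(m-1)} = \cdots = q_0 = 0$ and $\|x_0 - x_0\| = 0$, both the leading term $\|x_n - z\|^2$ and the sum $\sum_{k=-m+1}^{0}\langle x_k - z, q_k\rangle$ vanish, so \eqref{main_inequality} collapses to
\[
\|x_n - x_0\|^2 \;\le\; 2\sum_{k=n-m+1}^{n} \langle x_0 - x_k, q_k\rangle .
\]
For $n = 0$ the statement is trivial, so one assumes $n \ge 1$; note also that the summands with $k \le 0$ contribute nothing, since $q_k = 0$ there.

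For the remaining indices $k \in [1;n]$, I would insert the intermediate point $P_k(x_0) \in C_k$ and split
\[
\langle x_0 - x_k, q_k\rangle \;=\; \langle x_0 - P_k(x_0), q_k\rangle + \langle P_k(x_0) - x_k, q_k\rangle .
\]
Kolmogorov's criterion in the form of Lemma~\ref{easy-facts}.\ref{eq3} (applied at stage $k$ with $z = P_k(x_0) \in C_k$) makes the second inner product $\le 0$, while Cauchy--Schwarz bounds the first by $\|x_0 - P_k(x_0)\|\cdot\|q_k\|$. As $j_k \in [1;m]$, the hypothesis of the proposition gives $\|x_0 - P_k(x_0)\| = \|x_0 - P_{j_k}(x_0)\| \le \eps^2/(4bn)$, whence $\langle x_0 - x_k, q_k\rangle \le \tfrac{\eps^2}{4bn}\|q_k\|$ for each such $k$.

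Summing over $k$ and invoking Lemma~\ref{easy-facts2}, together with the crude bound $\sum_{k=0}^{n-1}\|x_k - x_{k+1}\| \le 2bn$ (each of the $n$ terms is at most $2b$ because $(x_n) \subseteq \overline{B}_b(p)$), one arrives at
\[
\|x_n - x_0\|^2 \;\le\; \frac{\eps^2}{2bn}\sum_{k=n-m+1}^{n}\|q_k\| \;\le\; \frac{\eps^2}{2bn}\sum_{k=0}^{n-1}\|x_k - x_{k+1}\| \;\le\; \frac{\eps^2}{2bn}\cdot 2bn \;=\; \eps^2 ,
\]
and taking square roots finishes the proof. There is no substantial obstacle here: the only delicate points are the index bookkeeping (ensuring the negative-index terms drop out because the corresponding $q_k$ vanish, so that the interval $[n-m+1;n]$ may be freely replaced by $[0;n-1]$ after applying Lemma~\ref{easy-facts2}) and keeping track of the correct direction of the inequality supplied by Lemma~\ref{easy-facts}.\ref{eq3}.
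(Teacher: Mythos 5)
Your argument is correct and coincides step for step with the paper's proof: you invoke \eqref{main_inequality} with $i=n$, $n=0$, $z=x_0$, split $\langle x_0-x_k,q_k\rangle$ through $P_k(x_0)\in C_k$, discard the nonpositive term via Lemma~\ref{easy-facts}.\ref{eq3}, bound the remaining term by Cauchy--Schwarz and the hypothesis, and close with Lemma~\ref{easy-facts2} and $\sum_{k=0}^{n-1}\|x_k-x_{k+1}\|\le 2bn$. The only cosmetic difference is that you make explicit the (correct) observation that summands with $k\le 0$ vanish because $q_k=0$, which the paper leaves implicit.
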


\begin{proof}
	Let $\eps>0$ and $n\in\N$ be given. Assuming the premise of the implication, by \eqref{main_inequality} (with $i=n$, $n=0$ and $z=x_0$), we have
	\begin{align*}
		\|x_n-x_0\|^2&\leq 2\underbrace{\sum_{k=-(m-1)}^{0}\langle x_k-x_0,q_k\rangle}_{=0} + 2\sum_{k=n-m+1}^{n}\langle x_0-x_k,q_k\rangle\\
		&= \sum_{k=n-m+1}^{n}\langle x_0-P_k(x_0),q_k\rangle + 2\sum_{k=n-m+1}^{n}\underbrace{\langle \underbrace{P_k(x_0)}_{\in C_k}-x_k,q_k\rangle}_{\leq 0\ \text{by Lemma~\ref{easy-facts}.\ref{eq3}}}\\
		&\leq 2\sum_{k=n-m+1}^{n}\|x_0-P_k(x_0)\|\cdot\|q_k\|\leq \frac{\eps^2}{2bn}\sum_{k=n-m+1}^{n}\|q_k\|\\
		&\leq \frac{\eps^2}{2bn}\sum_{k=0}^{n-1}\|x_k-x_{k+1}\|\leq\eps^2\qquad \text{using Lemma~\ref{easy-facts2}},
	\end{align*}
	which entails $\|x_n-x_0\|\leq \eps$ and concludes the proof.
\end{proof}

Since the natural proof that the scheme \eqref{dykstra-iter} satisfies
\[
	x_0\in \bigcap_{j=1}^mC_j \to \forall n\in\N \left(x_n=x_0\right)
\]
doesn't require the knowledge that the functions $P_j$ are the metric projections (and already holds if for example they are only required to be nonexpansive maps, as it suffices to guarantee their extensionality), logical considerations make it clear that there must exist a bound which does not depend on the additional constant $b\in\N$. With such perspective, we give an alternative version of Proposition~\ref{kappa}.
\begin{proposition}\label{kappa2}
	Consider $(x_n)$ to be the iteration generated by \eqref{dykstra-iter} with some initial point $x_0\in X$. Then,
	\[
	\forall \eps>0\ \forall n\in\N^* \left( \bigwedge_{j=1}^m\|x_0-P_j(x_0)\|\leq \frac{\eps}{5^{n-1}} \to \|x_n-x_0\|\leq \eps \right).
	\]
\end{proposition}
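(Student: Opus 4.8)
The plan is to prove, by induction on $k\geq 1$, the two explicit estimates
\[
\|x_k-x_0\|\leq 5^{k-1}\delta \qquad\text{and}\qquad \|q_k\|\leq 5^{k-1}\delta,
\]
where $\delta:=\max_{1\leq j\leq m}\|x_0-P_j(x_0)\|$. Granting these, the statement is immediate: the premise of the implication is precisely $\delta\leq\eps/5^{n-1}$, so $\|x_n-x_0\|\leq 5^{n-1}\delta\leq\eps$. As anticipated in the discussion preceding the statement, the argument will only exploit that each $P_j$ is nonexpansive, together with the hypothesis bounding $\|x_0-P_j(x_0)\|$, and will not use Kolmogorov's criterium at all. (In particular it yields the stronger fact that $\|x_k-x_0\|\leq 5^{k-1}\max_{j}\|x_0-P_j(x_0)\|$ for every $k$.)

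For the base case $k=1$, the initialization $q_{1-m}=\cdots=q_0=0$ gives $x_1=P_1(x_0+q_{1-m})=P_1(x_0)$ and $q_1=x_0-x_1$, so $\|x_1-x_0\|=\|q_1\|=\|P_1(x_0)-x_0\|\leq\delta=5^0\delta$. For the induction step, fix $k\geq 2$ and write $y_k:=x_{k-1}+q_{k-m}$, so that $x_k=P_k(y_k)$ and $q_k=y_k-x_k$. Since $m\geq 2$, the term $q_{k-m}$ either vanishes (when $k-m\leq 0$) or satisfies $\|q_{k-m}\|\leq 5^{k-m-1}\delta\leq 5^{k-2}\delta$ by the induction hypothesis; together with $\|x_{k-1}-x_0\|\leq 5^{k-2}\delta$ this gives $\|y_k-x_0\|\leq 2\cdot 5^{k-2}\delta$. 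Now, inserting the point $P_k(x_0)$ and using nonexpansiveness of $P_k$ together with the triangle inequality,
\[
\|x_k-x_0\|\leq\|P_k(y_k)-P_k(x_0)\|+\|P_k(x_0)-x_0\|\leq\|y_k-x_0\|+\delta,
\]
\[
\|q_k\|=\|y_k-P_k(y_k)\|\leq\|y_k-x_0\|+\|x_0-P_k(x_0)\|+\|P_k(x_0)-P_k(y_k)\|\leq 2\|y_k-x_0\|+\delta.
\]
Substituting $\|y_k-x_0\|\leq 2\cdot 5^{k-2}\delta$ yields $\|x_k-x_0\|\leq(2\cdot 5^{k-2}+1)\delta$ and $\|q_k\|\leq(4\cdot 5^{k-2}+1)\delta$; as $k\geq 2$ forces $5^{k-2}\geq 1$, both bounds are at most $5\cdot 5^{k-2}\delta=5^{k-1}\delta$, which closes the induction.

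There is no serious obstacle here, the computation being entirely elementary. The only points deserving attention are the bookkeeping for the auxiliary terms $q_{k-m}$ of non-positive index (handled by the initialization), and the verification that the two arithmetic inequalities $2\cdot 5^{k-2}+1\leq 5^{k-1}$ and $4\cdot 5^{k-2}+1\leq 5^{k-1}$ hold for every $k\geq 2$. It is the second of these — morally the requirement $4+1\leq 5$, with equality at $k=2$ — that pins down the constant $5$ in the statement: it originates from the crude estimate $\|q_k\|\leq 2\|y_k-x_0\|+\delta$, and using instead the projection property $\|q_k\|=\dist(y_k,C_k)\leq\|y_k-P_k(x_0)\|\leq\|y_k-x_0\|+\delta$ would allow a smaller base (at the cost of invoking more than nonexpansiveness of the $P_j$).
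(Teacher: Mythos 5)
Your proof is correct. It is the same kind of argument as in the paper --- a simultaneous strong induction bounding both the iterates and the auxiliary terms, using only that the $P_j$ are nonexpansive and that $\delta:=\max_j\|x_0-P_j(x_0)\|$ is small --- but the bookkeeping is organized a little differently, and I think it is worth noting the contrast. The paper phrases the induction hypothesis at stage $n$ as: if $\delta\leq\eps/5^{n-1}$ then $\|x_{n'}-x_0\|\leq\eps$ and $\|q_{n'-m}\|\leq\eps$ for all $n'\in[1;n]$; the inductive step applies this with $\eps$ replaced by $\eps/5$ and then bounds the new auxiliary term $q_{n+1-m}$ by unfolding its defining recursion $q_{n+1-m}=x_{n-m}+q_{n+1-2m}-x_{n+1-m}$ (with a small case split on $n<m$ versus $n\geq m$), obtaining $\|q_{n+1-m}\|\leq 3\eps/5$. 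You instead track the unshifted quantities $\|x_k-x_0\|$ and $\|q_k\|$ and carry an explicit multiplicative invariant $\leq 5^{k-1}\delta$; the bound on $\|q_k\|$ then comes directly from nonexpansiveness of $P_k$ via $\|q_k\|=\|y_k-P_k(y_k)\|\leq 2\|y_k-x_0\|+\delta$, without appealing to the $q$-recursion. The two are morally equivalent and pin down the same base $5$ (your critical inequality $4\cdot 5^{k-2}+1\leq 5^{k-1}$, tight at $k=2$, corresponds to the paper's $\eps/5+3\eps/5+\eps/5=\eps$), but your version avoids the explicit case analysis by letting the initialization $q_{\leq 0}=0$ absorb it, and the multiplicative formulation makes the role of $\delta$ visible at a glance. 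One small presentational caveat: your inductive step draws on the hypothesis at the two indices $k-1$ and $k-m$, so it is a strong (course-of-values) induction; it would be worth saying so explicitly.
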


\begin{proof}
	By induction we show the stronger assertion that for all $n\in\N^*$
	\[
	\forall \eps>0 \left( \bigwedge_{j=1}^m\|x_0-P_j(x_0)\|\leq \frac{\eps}{5^{n-1}} \to \forall n'\in [1;n] \left( \|x_{n'}-x_0\|\leq \eps \land \|q_{n'-m}\|\leq \eps \right)\right).
	\]
	For $n=1$, we have $q_{1-m}=0$ and
	\[
	\|x_1-x_0\|=\|P_1(x_0)-x_0\|\leq \frac{\eps}{5^0}=\eps.
	\]
	Assuming that the claim holds for some $n\in \N^*$, suppose that
	\[
	\bigwedge_{j=1}^m \|x_0-P_j(x_0)\|\leq \frac{\eps}{5^n}.
	\]
	By the induction hypothesis, we have
	\begin{equation}\label{IH_kappa2}
	\forall n'\in [1;n] \left( \|x_{n'}-x_0\|\leq \frac{\eps}{5} \land \|q_{n'-m}\|\leq \frac{\eps}{5} \right),
	\end{equation}
	and, in particular, we just need to verify the consequent for $n'=n+1$. Let us first focus on $q_{n+1-m}$. If $n<m$, then $q_{n+1-m}=0$ and so we assume $n\geq m$. We have,
	\begin{equation*}
		\|q_{n+1-m}\|=\|x_{n-m}+q_{n+1-2m}-x_{n+1-m}\|\leq \|x_{n-m}-x_{n+1-m}\|+\|q_{n+1-2m}\|
	\end{equation*}
	Since $n+1-m\in [1;n]$, by \eqref{IH_kappa2} we have $\|q_{n+1-2m}\|\leq \eps/5$. On the other hand, we have $\|x_{n-m}-x_{n+1-m}\|\leq 2\eps/5$. Indeed, if $n=m$ then
	\[
	\|x_{n-m}-x_{n+1-m}\|=\|x_0-x_1\|=\|x_0-P_1(x_0)\|\leq \frac{\eps}{5^n}\leq \frac{2\eps}{5}.
	\]
	If $n>m$, then $n-m$, $n+1-m\in [1;n]$ and by \eqref{IH_kappa2}
	\[
	\|x_{n-m}-x_{n+1-m}\|\leq \|x_{n-m}-x_0\|+\|x_{n+1-m}-x_0\|\leq \frac{2\eps}{5}.
	\]
	Overall, we conclude $\|q_{n+1-m}\|\leq 3\eps/5$, which in particular gives the second conjunct. It is now easy to verify that
	\begin{align*}
		\|x_{n+1}-x_0\|&\leq \|P_{n+1}(x_n+q_{n+1-m})-P_{n+1}(x_0)\| + \|P_{n+1}(x_0)-x_0\|\\
		&\leq \|x_n-x_0\|+\|q_{n+1-m}\|+\|P_{n+1}(x_0)-x_0\|\\
		&\leq \frac{\eps}{5} + \frac{3\eps}{5} + \frac{\eps}{5^n} \leq \eps,
	\end{align*}
	concluding the proof.\qedhere	
\end{proof}

We now have the following result stating that the existence of rates of convergence that are uniform for initial points in $\overline{B}_b(p)$, will entail the existence of a modulus of regularity for the convex sets.
\begin{proposition}\label{rate_to_regul}
	Let $p\in \bigcap_{j=1}^m C_j=:C$. For any $b\in\N$, assume the existence of a common rate of convergence towards the limit $P_C(x_0)$ for any iteration generated by \eqref{dykstra-iter} with initial point $x_0\in\overline{B}_b(p)$, i.e.
	\[
	\|x_0-p\|\leq b\ \to\ \forall \eps>0\ \forall n\geq\rho(b,\eps) \left( \|x_n-P_C(x_0)\|\leq \eps\right).
	\]
	Then, the function 
	\[
	\mu(b,\eps):=\max\left\{\frac{\eps^2}{16b\cdot\rho(b,\eps/2)}, \frac{\eps}{2\cdot5^{\rho(b, \eps/2)}}\right\}
	\]
	is a modulus of regularity for the sets $C_1, \cdots, C_m$ centred at $p$.
\end{proposition}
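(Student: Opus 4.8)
The plan is to use a given uniform rate of convergence $\rho$ directly as the raw material for a modulus of regularity, by running Dykstra's algorithm starting precisely from the point whose distance to $C:=\bigcap_{j=1}^m C_j$ one wants to control. Concretely, I would fix $\eps>0$, $r=b\in\N$ and a point $x\in\overline{B}_b(p)$ with $\bigwedge_{j=1}^m\|x-P_j(x)\|\leq\mu(b,\eps)$, and aim to produce $z\in C$ with $\|x-z\|\leq\eps$. One may assume without loss of generality that $\rho(b,\delta)\geq 1$ for all $\delta>0$, since replacing $\rho$ by $\max\{\rho,1\}$ leaves it a rate of convergence; this guarantees $n_0:=\rho(b,\eps/2)\geq 1$ and in particular that the expression defining $\mu(b,\eps)$ is well posed. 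Take $x_0:=x$ as initial point of \eqref{dykstra-iter} and put $z:=P_C(x_0)$, which lies in $C$ by definition of the metric projection; the task reduces to showing $\|x_0-z\|\leq\eps$.

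From the rate-of-convergence hypothesis applied at $\eps/2$ one gets immediately $\|x_{n_0}-z\|\leq\eps/2$, so by the triangle inequality it suffices to prove $\|x_0-x_{n_0}\|\leq\eps/2$, and this is exactly where the two ``no-movement'' estimates Proposition~\ref{kappa} and Proposition~\ref{kappa2} come in. Writing $A:=\frac{\eps^2}{16b\,n_0}$ and $B:=\frac{\eps}{2\cdot 5^{n_0}}$, so that $\mu(b,\eps)=\max\{A,B\}$, the assumption $\|x_0-P_j(x_0)\|\leq\mu(b,\eps)$ forces, for every $j$, that $\|x_0-P_j(x_0)\|$ lies below at least one of $A$ and $B$ --- namely whichever equals the maximum. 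If all the $\|x_0-P_j(x_0)\|$ are $\leq A=\frac{(\eps/2)^2}{4b\,n_0}$, I apply Proposition~\ref{kappa} with $\eps/2$ in place of $\eps$ and $n=n_0$ to conclude $\|x_{n_0}-x_0\|\leq\eps/2$; if instead they are all $\leq B\leq\frac{\eps/2}{5^{n_0-1}}$, I apply Proposition~\ref{kappa2} with $\eps/2$ in place of $\eps$ and $n=n_0\geq 1$ to obtain the same bound. Either way $\|x_0-x_{n_0}\|\leq\eps/2$, whence $\|x_0-z\|\leq\|x_0-x_{n_0}\|+\|x_{n_0}-z\|\leq\eps$. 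This is precisely the conclusion of \eqref{mod_regularity} for the point $x$, and since $x$, $\eps$, $b$ were arbitrary it shows $\mu$ is a modulus of regularity centred at $p$.

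I do not anticipate a real obstacle: once Propositions~\ref{kappa} and~\ref{kappa2} and the assumed uniform rate are in hand, the argument is pure bookkeeping and needs no compactness or geometric input. The one point deserving care --- and the reason the modulus is built from the \emph{maximum} of the two thresholds rather than their minimum --- is the case-split on which of $A$, $B$ dominates: the hypothesis $\|x_0-P_j(x_0)\|\leq\max\{A,B\}$ does not place $\|x_0-P_j(x_0)\|$ below both thresholds, but it does place it below the one realizing the maximum, which is all that each of the two propositions requires. The only other thing to watch is the normalization $\rho\geq 1$, which both makes $A$ well defined and lets Proposition~\ref{kappa2} (stated for $n\in\N^*$) be invoked with $n=n_0$.
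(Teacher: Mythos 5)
Your proposal is correct and follows essentially the same argument as the paper: run Dykstra from $x_0:=x$, bound $\|x_{n_0}-P_C(x)\|\leq\eps/2$ via $\rho$, bound $\|x-x_{n_0}\|\leq\eps/2$ via Propositions~\ref{kappa} and~\ref{kappa2}, and conclude by the triangle inequality. The paper's one-line phrase ``by both Proposition~\ref{kappa} and \ref{kappa2}'' is exactly the case-split you spell out (whichever of the two thresholds realizes the max, the corresponding proposition applies), and your WLOG normalization $\rho\geq 1$ is a minor tidy-up the paper leaves implicit.
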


\begin{proof}
	Consider $\eps>0$, $b\in\N$ and $x\in \overline{B}_b(p)$, and assume that
	\[
	\bigwedge_{j=1}^m\|x-P_j(x)\|\leq \max\left\{\frac{\eps^2}{16b\cdot\rho(b,\eps/2)}, \frac{\eps}{2\cdot5^{\rho(b, \eps/2)}}\right\}.
	\]
	Let $(x_n)$ be the iteration generated by \eqref{dykstra-iter} with initial point $x_0:=x$. Then by both Proposition~\ref{kappa} and \ref{kappa2}, $\|x_{\rho(b,\eps/2)}-x\|\leq \frac{\eps}{2}$. On the other hand, by the assumption on $\rho$, we have $\|x_{\rho(b,\eps/2)}-P_C(x)\|\leq \eps/2$. Hence,
	\[
	\|x-P_C(x)\|\leq \|x-x_{\rho(b,\eps/2)}\|+\|x_{\rho(b,\eps/2)}-P_C(x)\|\leq \eps.\qedhere
	\]
\end{proof}

Note that the requirement $p\in C$ is tied with Proposition~\ref{kappa} and can we can take $p$ an arbitrary point in $X$ with suitable changes. The key idea of the previous argument is that a rate of convergence for an iteration which remains constant whenever the initial point is already in the target set, will entail the existence of a modulus of regularity. An analogous argument was used in \cite[Proposition~4.4]{KohlenbachLopes-AcedoNicolae(2019)} for the case of the Picard iteration in a metric setting. Indeed, this reasoning can be stated in a general framework, but we refrain from doing it here and direct the reader to \cite{Pinto(2023)}.

If a rate of convergence is available but it is sensitive to the initial point, then we obtain a weaker result (with unclear usefulness).
\begin{proposition}
	For any $x_0\in X$, assume the existence of rate of convergence towards $P_C(x_0)$ for the iteration generated by \eqref{dykstra-iter} with initial point $x_0$, i.e.
	\[
	\forall \eps>0\ \forall n\geq\rho(x_0,\eps) \left( \|x_n-P_C(x_0)\|\leq \eps\right).
	\]
	Then,
	\[
	\forall \eps >0\ \forall x\in X \left( \bigwedge_{j=1}^m\|x-P_j(x)\|\leq \frac{\eps}{2\cdot5^{\rho(x, \eps/2)}} \to \|x-P_C(x)\|\leq \eps \right).
	\]
\end{proposition}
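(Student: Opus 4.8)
The plan is to mimic the proof of Proposition~\ref{rate_to_regul}, but to invoke only the parameter-free estimate of Proposition~\ref{kappa2} in place of Proposition~\ref{kappa}, so that no auxiliary bound $b$ is needed. Fix $\eps>0$ and $x\in X$, and assume the premise $\bigwedge_{j=1}^m\|x-P_j(x)\|\leq \frac{\eps}{2\cdot 5^{\rho(x,\eps/2)}}$. Let $(x_n)$ be the iteration generated by \eqref{dykstra-iter} with initial point $x_0:=x$, and abbreviate $n_0:=\rho(x,\eps/2)$. I would first dispatch the trivial case $n_0=0$: there the hypothesis on $\rho$ applied at $n=0\geq\rho(x,\eps/2)$ already gives $\|x-P_C(x)\|\leq \eps/2\leq\eps$. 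So from now on one may assume $n_0\geq 1$.

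The main step is to show $\|x_{n_0}-x\|\leq \eps/2$. Since $5^{n_0}\geq 5^{n_0-1}$, the premise gives \emph{a fortiori} $\bigwedge_{j=1}^m\|x_0-P_j(x_0)\|\leq \frac{\eps/2}{5^{n_0-1}}$, so Proposition~\ref{kappa2}, applied with $\eps$ replaced by $\eps/2$ and $n=n_0\in\N^*$, yields $\|x_{n_0}-x_0\|\leq \eps/2$. On the other hand, the assumed rate of convergence applied at $n=n_0\geq \rho(x,\eps/2)$ gives $\|x_{n_0}-P_C(x_0)\|\leq \eps/2$. Since $x_0=x$, the triangle inequality then gives
\[
\|x-P_C(x)\|\leq \|x-x_{n_0}\|+\|x_{n_0}-P_C(x)\|\leq \frac{\eps}{2}+\frac{\eps}{2}=\eps,
\]
which is the asserted implication.

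I do not expect a genuine obstacle here: the statement is just the pointwise (non-uniform-in-$x_0$) analogue of Proposition~\ref{rate_to_regul}, and the argument is identical apart from discarding the dependence on $b$ and using the $b$-free Proposition~\ref{kappa2}. The only points needing a little care are the degenerate case $\rho(x,\eps/2)=0$ (handled above) and the harmless passage from the exponent $-\rho(x,\eps/2)$ to $-(\rho(x,\eps/2)-1)$ that is needed to match the hypothesis of Proposition~\ref{kappa2}.
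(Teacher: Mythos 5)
Your proof is correct and is exactly the argument the paper intends (the paper states this proposition without an explicit proof, after Proposition~\ref{rate_to_regul}, clearly expecting the same reasoning adapted to drop the dependence on $b$). You correctly identified that only Proposition~\ref{kappa2} is usable here since Proposition~\ref{kappa} needs the bound $b$, correctly observed that the slightly over-strong hypothesis $\eps/(2\cdot 5^{\rho(x,\eps/2)})$ gives the $\eps/(2\cdot 5^{\rho(x,\eps/2)-1})$ needed by Proposition~\ref{kappa2}, and you handled the degenerate case $\rho(x,\eps/2)=0$ (which Proposition~\ref{kappa2} does not cover since it requires $n\in\N^*$) by falling back directly on the rate-of-convergence hypothesis.
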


For the particular case of the intersection of two half-spaces, Duetsch and Hundal~\cite{DeutschHundal(1994)} obtained rate of convergence for Dykstra's algorithm which are uniform on the choice of the initial point depending only on a bound to its distance to the intersection set. By Propositions~\ref{rate_to_regul}, such situation entails a modulus of regularity for the two half-spaces. In full generality, but provided there exists a modulus of regularity, Theorem~\ref{rates} guarantee the existence of uniform rates of convergence. This still leaves open the possibility that no modulus of regularity exist and yet rates of convergence are available. Such rates would necessarily be sensitive to the initial point of the iteration -- such is the case with the rates of convergence obtained by Deutsch and Hundal for the general polyhedral case. In contrast, we obtained rates of metastability in full generality which are uniform in all the parameters of the convex feasibility problem.

\section{Final Remarks}\label{final}

This quantitative study analyzes the proof of strong convergence of Dykstra's cyclic projection algorithm. Although the original proof relies on several strong mathematical principles, in the end we obtain simple computable metastability rates (primitive recursive in $f$ in the sense of Kleene) which are highly uniform in the parameters of the convex feasibility problem. Indeed, our rates only require information on the number of convex sets $m$, and an upper bound $b$ on the distance between the initial point and the feasibility set. Furthermore, under an regularity assumption, we adapt the argument to actually derive uniform rates of convergence towards the feasible point closest to the starting term of the iteration. We show that the regularity condition is actually necessary for the existence of such uniform rates. The regularity assumption comes in the form of a modulus of regularity $\mu$ which (informally) guarantees that the point is $\eps$-close to the intersection whenever it is $\mu(\eps)$-close to all the individual convex sets. In the general case, the finitary version follows through by the crucial observation that the role of the weak limit can actually be replaced by that of a weak version of the projection of $x_0$ onto the intersection set. We show that our main result (Theorem~\ref{main_quant}) is a true finitary version of Theorem~\ref{main} in the sense that it only regards a finite number of iteration terms and the full statement is fully recovered in an elementary way from the quantitative version (cf.\ Remark~\ref{true-finitization}).

This kind of argument is in line with the macro developed in \cite{FerreiraLeusteanPinto(2019)}. The ability to establish the Cauchy property of the iteration without the use of sequential weak compactness is of paramount relevance as it ensures that the final quantitative bound information will be of a simple nature (namely, it can be described without the need of Spector's bar recursive functional \cite{Spector(1962)}). This technique has been applied several times in proof mining (e.g.\ in \cite{Pinto(2021),DinisPinto(2021)i,DinisPinto(2021)ii}). Moreover, even if one is not concerned with quantitative information, a simpler proof which bypasses complex comprehension principles (in this case the arithmetical comprehension required to justify weak compactness) allows for easier generalizations of the original result (see e.g.\ the recent \cite{DinisPinto(2023)ii} where a quantitative approach allowed to establish a fully new result in a geodesic setting in which weak compactness arguments, common in Hilbert spaces, are harder to employ).

Previous eliminations of weak compactness principles were applied to Halpern-type iterations and to convergence proofs following a similar common structure (reminiscent of Wittmann's argument in \cite{Wittmann(1992)}). However, Dykstra's algorithm doesn't appear to have any connection with the Halpern iteration and the proof follows a completely different argument. Thus, it was not a priori known if it would be possible to bypass the compactness arguments crucial in the original proof. Furthermore, regarding the discussion under a regularity assumption, as is explained in \cite{KohlenbachLopes-AcedoNicolae(2019)}, it is known that for Fejér monotone iterations a modulus of regularity allows one to obtain rates of convergence. Note that in this case however, Dykstra's method fails to be Fejér monotone, and still it was possible to extract uniform rates of convergence.\\


\noindent
{\bf Acknowledgment:} This work benefited from discussions with Ulrich Kohlenbach and Nicholas Pischke. The author was supported by the DFG Project KO 1737/6-2.

\bibliographystyle{plain}
\bibliography{References}
\end{document}